\newtheorem{thm}{Theorem}[section]
\newtheorem{prop}[thm]{Proposition}
\newtheorem{lem}[thm]{Lemma}
\newtheorem{cor}[thm]{Corollary}
\newtheorem{defn}[thm]{Definition}
\newtheorem{prob}[thm]{Problem}
\theoremstyle{remark}
\newtheorem{rem}[thm]{Remark}
\newtheorem{ex}[thm]{Example}
\newcommand{\R}{\mathbb{ R}}
\def\co{\colon\thinspace}
\title{Approximating $C^0$-foliations by contact structures}
\author{Jonathan Bowden}
\address{Mathematisches Institut, Ludwig-Maximillians-Universit\"at, Theresienstr. 39, 80333 Munich, Germany}
\email{jonathan.bowden@math.lmu.de}
\date{\today}
\begin{document}
\maketitle
\begin{abstract}
We show that any co-orientable foliation of dimension two on a closed orientable $3$-manifold with continuous tangent plane field can be $C^0$-approximated by both positive and negative contact structures unless all leaves of the foliation are simply connected. 
As applications we deduce that the existence of a taut $C^0$-foliation implies the existence of universally tight contact structures in the same homotopy class of plane fields and that a closed $3$-manifold that admits a taut $C^0$-foliation of codimension-1 is not an $L$-space in the sense of Heegaard-Floer homology.


\end{abstract}

\section{Introduction}
An important breakthrough in low-dimensional contact topology in the 90's was Eliashberg and Thurston's discovery of a fundamental link between the theory of codimension-$1$ foliations and contact structures. This link is provided by the following approximation result:
\begin{thm}[Eliashberg-Thurston \cite{ETh}]\label{Eli_Th}
Let $\mathcal{F}$ be a foliation of class $C^2$ on a closed oriented $3$-manifold that is not the foliation by spheres on $S^2 \times S^1$. Then $T\mathcal{F}$ can be $C^0$-approximated by positive and negative contact structures.
\end{thm}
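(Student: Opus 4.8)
The plan is to work inside the larger class of \emph{confoliations}: co-oriented plane fields $\xi=\ker\alpha$ with $\alpha\wedge d\alpha\ge0$, which interpolates between foliations ($\alpha\wedge d\alpha\equiv0$) and positive contact structures ($\alpha\wedge d\alpha>0$). The negative case reduces to the positive one formally: a negative contact structure on $M$ that is $C^0$-close to $T\mathcal F$ is the same object as a positive contact structure $C^0$-close to $T\mathcal F$ on $M$ with the reversed orientation, and neither ``closed oriented'' nor ``not the sphere foliation on $S^2\times S^1$'' is disturbed by reversing orientation; so it suffices to produce positive approximations. That problem I would split into two stages: (I) deform $\mathcal F$ to a confoliation $\eta_0$ that is $C^0$-close to $T\mathcal F$ and is genuinely contact on some nonempty open set; and (II) enlarge this ``contact region'' until it fills $M$, staying $C^0$-close to $T\mathcal F$ throughout.

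For Stage (I) I would dichotomize according to the holonomy of $\mathcal F$. If some leaf $L$ carries a closed curve $\gamma$ with non-trivial holonomy, then on a standard neighborhood of $\gamma$ the foliation is the suspension of a $C^2$ germ $h\colon(\R,0)\to(\R,0)$ with $h\neq\mathrm{id}$; restricting to a subinterval on which $h$ is a strict contraction (or, if $h'(0)\neq1$, immediately) one writes down by hand a one-parameter deformation of $T\mathcal F$ supported near $\gamma$ that spirals the nearby leaves onto $L$ and becomes contact on an annular neighborhood of $\gamma$ --- this is the elementary local model. If $\mathcal F$ has no leaf with non-trivial holonomy, then by the structure theory of $C^2$ foliations without holonomy on closed $3$-manifolds $\mathcal F$ is either a fibration over $S^1$ or has all leaves dense. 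In the fibration case the fiber $F$ is not $S^2$ (that is precisely the excluded example), so $F$ has a proper subsurface with trivial tangent bundle, and an explicit ``spinning'' of the plane field in the $S^1$-direction over that subsurface, adapted to a handle decomposition of $F$, produces a contact region while leaving $T\mathcal F$ essentially unchanged elsewhere. In the dense-leaf case one uses the holonomy pseudogroup: if it is transversely homogeneous, an explicit rotating family (as in the classical turbulization-free models on $T^3$) does the job; otherwise it fails to be equicontinuous, one locates a non-closed path in a leaf along which the induced holonomy germ is a strict contraction, and glues in a contact model along a thickened copy of that path.

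For Stage (II) I would use a propagation argument. Given $\eta_0=\ker\alpha_0$, $C^0$-close to $T\mathcal F$ and contact on a nonempty open set $U$, and since $M$ is connected, every point of $M$ can be joined to $U$ by a path positively transverse to $\eta_0$; an explicit, arbitrarily $C^0$-small modification of $\alpha_0$ supported in a tube about such a path transports positivity of $\alpha\wedge d\alpha$ along it, and performing this for finitely many paths covering $M$ yields a confoliation that is contact everywhere and still $C^0$-close to $T\mathcal F$. (Equivalently one runs a heat-flow/averaging deformation of $\alpha_0$: the confoliation inequality is preserved, the contact locus is non-decreasing, and --- exactly because $(M,\mathcal F)$ is not the sphere foliation on $S^2\times S^1$ --- it eventually exhausts $M$.)

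The step I expect to be the genuine obstacle is Stage (I) in the no-holonomy situation, together with making Stage (II) quantitative: for a foliation without closed-curve holonomy one must understand the dynamics of the holonomy pseudogroup well enough to manufacture an honest local contraction along a (non-closed) arc and a contact model there, and the global propagation must be carried out while keeping $\alpha\wedge d\alpha\ge0$ everywhere and the plane field $C^0$-close to $T\mathcal F$. It is precisely here that the sphere foliation on $S^2\times S^1$ must be excluded --- it has trivial holonomy pseudogroup, admits a transverse invariant volume, and an Euler-class computation on its $2$-sphere leaves obstructs any $C^0$-close contact structure, so both the local perturbation and the propagation break down there. By contrast, the linear-holonomy local model, the reduction of the negative case to the positive one, and the bookkeeping in the fibration case are comparatively routine.
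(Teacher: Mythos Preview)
Your two-stage architecture is the right one and broadly matches the outline the paper quotes from \cite{ETh}, but Stage~(II) as written contains a genuine error, and Stage~(I) is not strong enough to feed the correct version of Stage~(II).

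The propagation lemma (Lemma~\ref{transport} here, which is \cite[Lemma~2.8.2]{ETh}) transports contactness along arcs \emph{tangent} to the confoliation, not transverse to it: one takes a flow box whose $y$-fibres are Legendrian and which is contact near one end, and spreads the contact condition through the box by monotonically tilting the planes about those fibres. There is no analogous mechanism along transverse arcs, and your claim that any point can be joined to $U$ by a positively transverse path already fails for non-taut foliations (a point trapped between two compact leaves cannot escape along a transversal). Once transport is done leafwise, a single contact region near one curve with holonomy is only reachable from leaves that accumulate on it. This is why the actual proof does substantially more in your Stage~(I): after first perturbing so that there are only finitely many compact leaves, one manufactures a contact region near \emph{every} minimal set --- by thickening and inserting suspensions near compact leaves, by invoking Sacksteder's theorem (this is where $C^2$ is genuinely used) to find a curve with linear holonomy inside each exceptional minimal set, and by a separate argument in the minimal case depending on whether there is holonomy. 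Since the closure of every leaf contains a minimal set, leafwise transport then reaches all of $M$. Your dichotomy ``some curve has holonomy versus the foliation is without holonomy'' collapses these cases and misses exactly the organisation around minimal sets, and with it the place where Sacksteder's theorem and the finiteness of minimal sets enter.
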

\noindent In this article we will consider extensions of this theorem to foliations of lower regularity. Eliashberg and Thurston already noted that Theorem \ref{Eli_Th} also holds for foliations that are smooth away from a finite collection of compact leaves. They also write (\cite{ETh}, p.\ 44)

\begin{quote}``However it is feasible that the result holds without any assumptions about the smoothness of the foliation.''
\end{quote}
As a matter of fact the statement of Theorem 2.9.1 on p.\ 44 in \cite{ETh} is for $C^0$-foliations but this is due to a (possibly Freudian) typographical slip. The main result of this article is that the statement quoted above is indeed correct. In order that approximation even makes sense for  $C^0$-foliations, we will only consider smooth leaved foliations that have continuous tangent distributions (cf.\ Section \ref{sec_fol} for precise definitions). So when we say that a contact structure approximates a $C^0$-foliation we will mean it approximates its tangent distribution $T\mathcal{F}$.
\begin{thm}\label{approx_cont}
Let $ \mathcal{F}$ be a $C^0$-foliation on a closed oriented $3$-manifold that is neither the foliation by spheres on $S^2 \times S^1$ nor a foliation by planes, in which case the manifold is $T^3$. Then $T\mathcal{F}$ can be $C^0$-approximated by positive and negative contact structures.
\end{thm}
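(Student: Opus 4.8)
The plan is to reduce Theorem~\ref{approx_cont} to the Eliashberg--Thurston theorem (Theorem~\ref{Eli_Th}). The obstacle is that a $C^0$-foliation has only a continuous tangent distribution and merely transversely continuous holonomy, so that neither Eliashberg--Thurston's ``linear holonomy'' perturbation --- the device that first creates a genuinely non-integrable region --- nor their subsequent propagation argument applies verbatim. The strategy has three stages: (i) a case analysis according to tautness, holonomy, and the topology of the leaves; (ii) in each case, replace $T\mathcal F$ by a $C^0$-close \emph{smooth} confoliation $\xi_0$ that coincides with a genuine contact structure on a non-empty open set and that still ``dominates'' the remaining integrable part; (iii) invoke the second half of the Eliashberg--Thurston argument, the propagation of the contact region, which manipulates only the plane field together with a transverse flow and hence survives once the localised smoothing of stage~(ii) has been performed.

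First I would dispose of the degenerate cases. If some leaf of $\mathcal F$ is a sphere, Reeb stability forces $M=S^2\times S^1$ with the foliation by spheres, which is excluded; if every leaf is a plane, Rosenberg's theorem on foliations by planes forces $M=T^3$, also excluded. These are precisely the cases in which every leaf is simply connected, so from now on $\mathcal F$ has a leaf $L$ containing an essential simple closed curve $\gamma$. I then split into: (a) $\mathcal F$ is not taut; (b) $\mathcal F$ is taut and without holonomy; (c) $\mathcal F$ is taut and has some leaf with non-trivial holonomy. In case (a) the non-taut part of $\mathcal F$ --- a union of generalised Reeb and dead-end components with toral boundary --- is handled by the explicit overtwisted models of Eliashberg--Thurston, after first smoothing a neighbourhood of the boundary tori (which is possible since the tori are leaves, hence smooth, and the transverse structure near them may be $C^0$-perturbed to a smooth one). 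In case (b), since the all-planes situation has been excluded, the structure theory of $C^0$-foliations without holonomy (Imanishi; Epstein--Millett--Tischler) identifies $\mathcal F$ up to topological conjugacy with one of the special models --- a fibration over $S^1$, or a linear foliation of a torus bundle or solv-manifold --- that are approximated directly as in Eliashberg--Thurston. Thus the genuinely new case is (c).

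The heart of the argument is then to manufacture a contact region near a taut $C^0$-leaf $L$ with holonomy along $\gamma$. Since the holonomy germ need not be differentiable, ``linear holonomy'' is unavailable, and this is the one step where an argument different from Eliashberg--Thurston's is needed. I would pass to an annular (or toral) neighbourhood $A$ of $\gamma$ in $L$, then to a foliated product neighbourhood $N\cong A\times(-1,1)$, on which $\mathcal F$ is governed by a one-dimensional homeomorphism germ fixing $0$ and non-trivial. A $C^0$-small perturbation of $T\mathcal F$ supported in $N$ makes this germ smooth and, if need be, adjusts it to have a fixed point with non-degenerate attracting linear part; equivalently, one writes down directly a smooth plane field on $N$ that is $C^0$-close to $T\mathcal F$, integrable and product-like near $\partial N$, and strictly contact on a smaller open set --- using that any two $C^0$-close co-orientable plane fields on the surface $A$ are joined over $A\times(-1,1)$ by an explicit confoliation that can be bent to be contact in the interior. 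This produces the required smooth confoliation $\xi_0$: $C^0$-close to $T\mathcal F$, equal to $T\mathcal F$ outside $N$, and contact on a non-empty open set.

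Finally I would run the Eliashberg--Thurston propagation. Tautness provides a closed transversal through every point of the integrable region of $\xi_0$ (in case (a) one first absorbs the non-taut pieces as above), and pushing the contact region along these transversals upgrades $\xi_0$ to a genuine positive contact structure $C^0$-close to $T\mathcal F$, hence homotopic to $T\mathcal F$ as a plane field. This last stage uses only $\xi_0$ and a smooth flow transverse to it, so it is insensitive to the regularity of $\mathcal F$ away from the pieces smoothed in stage~(ii). Running everything with the reversed co-orientation gives the negative contact structure. I expect stage~(ii) in case (c) to be the main obstacle: producing genuine non-integrability from a foliation carrying no differentiable holonomy data is exactly the place where the hypothesis that $\mathcal F$ is neither the sphere foliation on $S^2\times S^1$ nor a foliation by planes --- equivalently, that some leaf is not simply connected --- is actually used.
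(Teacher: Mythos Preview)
The central gap is in stage~(iii). The Eliashberg--Thurston propagation (their Lemma~2.8.2, here Lemma~\ref{transport}) is a statement about \emph{smooth} confoliations: one writes $\xi=\ker(dz-f(x,y,z)\,dx)$ on a box with $y$-intervals tangent to $\xi$ and then monotonises $f$ in $y$, which requires $f$ to be differentiable. Your $\xi_0$ agrees with $T\mathcal F$ outside the neighbourhood $N$, and there it is only $C^0$; to carry the contact region from $N$ to the rest of $M$ you must pass through the non-smooth part, where the transport lemma simply does not apply. Nor can you globally smooth first: a $C^0$-foliation need not admit any $C^0$-approximation by a $C^2$ one (Kopell's Lemma gives obstructions, as the paper notes in Remark~\ref{smoothing_attempt} and the introduction). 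So the claim that propagation ``is insensitive to the regularity of $\mathcal F$ away from the pieces smoothed in stage~(ii)'' is exactly what fails.

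The paper's proof is organised around this obstacle and looks quite different from your outline. There is no taut/non-taut/holonomy case split. Instead: (1)~reduce to finitely many compact leaves; (2)~for each of the finitely many minimal sets (Lemma~\ref{finite_exceptional}), find a non-simply-connected leaf (Lemma~\ref{lem_ima}, using the excluded cases), blow it up, and \emph{insert} attracting holonomy via a suspension (Lemma~\ref{hol_insertion}) to obtain a transitive system of contracting annular fences $A_i$; (3)~choose a Colin-style polyhedral decomposition (Section~\ref{jiggling_sec}), smooth $\mathcal F$ only near the $1$-skeleton and along finitely many \emph{disjoint} ribbons connecting the $2$-skeleton to the $A_i$ (Sections~\ref{sec_smooth},~\ref{sec_ribbons}), transport contactness along those ribbons, and then use that a plane field with strictly negative boundary holonomy on every polyhedron extends to a contact structure over the $3$-cells (Lemma~\ref{extend_close}); (4)~fill in the thickened annuli $N(A_i)$ by hand (Lemma~\ref{movie_quant}). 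The point is that smoothing is performed only on disjoint one-dimensional pieces where it \emph{can} be done, and the polyhedral machinery replaces the global propagation you invoke.

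A secondary issue: in your case~(b), a topological (or semi-)conjugacy of $\mathcal F$ to a smooth model foliation does not yield a $C^0$-approximation of $T\mathcal F$ by contact structures; it approximates the conjugate plane field, which lives in a different $C^0$-neighbourhood. The paper handles foliations without holonomy uniformly by the holonomy-insertion step~(2), not by classification.
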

Note that if $\mathcal{F}$ is a foliation by planes then after collapsing product regions the resulting foliation is topologically conjugate to a smooth foliation given by the kernel of some closed $1$-form by a result of Imanishi \cite{Ima}. This smooth foliation can then be approximated by contact structures as in Eliashberg and Thurston's proof of Theorem \ref{Eli_Th}. So in this case $\mathcal{F}$ is \emph{semi-conjugate} to a foliation that can be approximated by contact structures. The main application of Theorem \ref{approx_cont} is to construct tight contact structures from taut foliations.
\begin{thm}
Let $ \mathcal{F}$ be a taut $C^0$-foliation on a closed oriented $3$-manifold that is not the foliation by spheres on $S^2 \times S^1$. Then there are both positive and negative contact structures $\xi_+$ and $\xi_-$ that are symplectically semi-fillable, universally tight and homotopic as plane fields to $T\mathcal{F}$.
\end{thm}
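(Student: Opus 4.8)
The plan is to derive the theorem from Theorem~\ref{approx_cont} together with the symplectic‑filling machinery of Eliashberg and Thurston. First apply Theorem~\ref{approx_cont} to obtain a positive contact structure $\xi_+$ and a negative contact structure $\xi_-$, each $C^0$‑close to $T\mathcal{F}$. (If $\mathcal{F}$ happens to be a foliation by planes, so that $M=T^3$, Theorem~\ref{approx_cont} does not apply directly; instead one invokes the remark immediately following it: $\mathcal{F}$ is semi‑conjugate to the kernel of a nonsingular closed $1$‑form, and the Eliashberg--Thurston approximations of that smooth foliation are the standard contact structures on $T^3$, which are weakly symplectically fillable and universally tight.) Since two plane fields that are $C^0$‑close are homotopic — rotate one into the other along the short arc in each fibre of the Grassmann bundle of tangent $2$‑planes — the contact structures $\xi_\pm$ are homotopic to $T\mathcal{F}$ as plane fields. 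It remains to verify that $\xi_+$ and $\xi_-$ are symplectically semi‑fillable and universally tight, and these are now smooth assertions about honest contact structures.

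Because $\mathcal{F}$ is taut it carries a closed $2$‑form $\omega$ with $\omega|_{T\mathcal{F}}>0$ everywhere; Sullivan's construction of such a form from a closed transversal through every leaf is local in nature and uses only that $T\mathcal{F}$ is a continuous plane field, so it survives the drop to $C^0$ regularity. Positivity of $\omega$ on a continuous plane field is an open condition, so if the approximations in the previous step are taken fine enough then $\omega|_{\xi_\pm}>0$ as well. Following Eliashberg and Thurston, on $W=M\times[-1,1]$ one then builds a symplectic form $\Omega$ which near $M\times\{\pm1\}$ has the shape $\omega+\epsilon\,d(t\alpha_\pm)$, where $\xi_\pm=\ker\alpha_\pm$: for small $\epsilon>0$ this is symplectic precisely because $\omega$ dominates $\xi_\pm$, and it extends across the middle of $W$ because $\xi_-$ and $\xi_+$, both $C^0$‑close to $T\mathcal{F}$, are joined by a short path of plane fields all of which are dominated by $\omega$. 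Then $M\times\{1\}$, and $M\times\{-1\}$ with its orientation reversed, are weakly convex boundary components of $(W,\Omega)$ carrying $\xi_+$ and $\xi_-$ respectively. Hence $\xi_+$ and $\xi_-$ are weakly symplectically semi‑fillable, and in particular tight by the Gromov--Eliashberg theorem.

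For universal tightness, note that the entire construction lifts to the universal cover: since $W=M\times[-1,1]$ deformation retracts to $M$, its universal cover is $\widetilde W=\widetilde M\times[-1,1]$, equipped with the lift $\widetilde\Omega$ of $\Omega$, a weakly convex boundary $\widetilde M\sqcup\widetilde M$, and the lift $\widetilde\xi_\pm$ of $\xi_\pm$. If $\widetilde\xi_+$ were overtwisted, an overtwisted disk would sit inside a compact piece $K\subset\widetilde M$; running the pseudoholomorphic‑disk argument of Gromov and Eliashberg inside $\widetilde W$, the a priori area bound supplied by $[\widetilde\Omega]$ together with the bounded geometry of the lift confines the relevant holomorphic disks to a fixed compact region, so Gromov compactness applies even though $\widetilde W$ is open, and one obtains the usual contradiction; the same applies to $\widetilde\xi_-$. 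Thus $\xi_+$ and $\xi_-$ are universally tight. The step that demands the most care is this last one — weak fillability does not by itself force universal tightness, and the non‑compactness of $\widetilde W$ means the compactness of the relevant moduli space of holomorphic disks has to be re‑established by hand (alternatively, one exhausts $\widetilde M$ by compact submanifolds and caps off). The genuinely new $C^0$ ingredient, by contrast, is confined to producing the dominating form $\omega$ from a taut $C^0$‑foliation and observing that the approximation furnished by Theorem~\ref{approx_cont} can be taken fine enough to inherit it.
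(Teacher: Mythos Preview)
Your proof is correct and follows essentially the same route as the paper's: approximate via Theorem~\ref{approx_cont}, use the dominating closed $2$-form $\omega$ guaranteed by tautness to build a weak symplectic filling on $M\times[0,1]$, then lift to the universal cover and invoke the Gromov--Eliashberg Bishop-family argument, treating the foliation-by-planes case separately through Imanishi's semi-conjugacy. The only cosmetic difference is that the paper writes the symplectic form as $\Omega=\epsilon\,d(t\tilde\alpha)+\omega$ with a single smooth approximation $\tilde\alpha$ of a continuous defining form for $T\mathcal{F}$, whereas you interpolate between $\alpha_\pm$ across the cylinder; these are equivalent once $\xi_\pm$ are taken $C^0$-close enough to $T\mathcal{F}$.
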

\noindent A similar statement holds for Reebless foliations (Theorem \ref{Reebless}). A further consequence is that $L$-spaces in the sense of Heegaard-Floer theory do not admit any taut foliations (Corollary \ref{L_Space}). We remark that a weakened version of Theorem \ref{approx_cont} for special classes of taut foliations has been obtained in \cite{KR} using rather different methods. Moreover, Kazez and Roberts have refined their methods to give an independent proof of Theorem \ref{approx_cont} for general taut foliations \cite{KR2} without any assumptions other than that the manifold is not $S^2\times S^1$ with the product foliation. In particular, the hypothesis that the foliation is not a (non-minimal) foliation by planes is purely technical and can be removed. 

To illustrate the respective similarities and differences between the $C^2$-case and its $C^0$-counterpart we summarise the steps involved in the proof of Theorem \ref{Eli_Th} and \ref{approx_cont} respectively:
\subsection*{$C^2$-case} Here there are 3 main steps:
\begin{enumerate}
\item After an initial perturbation we can assume that there are finitely many compact leaves.
\item One then produces contact regions near minimal sets using holonomy: For closed leaves this is done by thickening and inserting suspension foliations, for exceptional minimal sets one appeals to Sacksteder's theorem and for minimal foliations one uses a special argument depending on whether there is holonomy or not.
\item The contactness can then be transported around $M$ since any leaf accumulates on some minimal set.
\end{enumerate}
\subsection*{$C^0$-case}
The basic observation, which has been utilised to great effect by Colin \cite{Col} and later by Vogel \cite{Vog}, is that a contact structure is essentially determined by its restriction to the $2$-skeleton of a suitable polyhedral decomposition. More precisely, any smooth plane field defined near the $2$-skeleton of a polyhedral decomposition such that the holonomies are negative on the (oriented) boundaries of all $3$-cells can be extended to a (positive) contact structure over the interiors of all polyhedra. This means that it suffices to find a plane field near the $2$-skeleton with this property that is close to the original foliation. The steps in the $C^0$-case are then as follows:
\begin{enumerate}
\item[($1'$)] Assume that the number of closed leaves is finite as in the $C^2$-case above.
\item[($2'$)]  Here we treat all cases equally: one introduces holonomy by thickening leaves (compact or not) and inserting suitable suspension foliations. After this one then produces positive/negative contact regions near these holonomy curves.
\item[($3'$)]  The contactness is then transported via \emph{disjoint} thickenings of embedded arcs tangent to the plane field so that the resulting plane field induces the correct holonomies on the $2$-skeleton of some polyhedral decomposition outside small neighbourhoods of the holonomy curves. (Here care is needed so that holonomy makes sense cf.\ Defintion \ref{def_confol} ff.)
\item[($4'$)]  One then fills in the $3$-cells by contact structures away from neighbourhoods of the holonomy curves from step ($2'$) and finally extends the contact structure over (slightly enlarged) neighbourhoods of these holonomy curves.
\end{enumerate}
For the second step in the $C^0$-case above, one needs to carefully analyse the exceptional minimal sets of a general $C^0$-foliation. These have two important properties: they are finite and each contains a non-simply connected leaf unless $M$ is $T^3$ or $S^2 \times S^1$ and the foliation is very special, in that \emph{all} leaves are simply connected. Furthermore, the reason that the final step is divided into two parts is that one first needs to smoothen before one can transport the contactness. For foliations this can only be carried out near disjoint leafwise curves and in general there are $C^0$-foliations that cannot globally be $C^0$-approximated by ones of class $C^2$ -- for example there are restrictions given by Kopell's Lemma. 

An important technical point here is that a continuous plane field does not induce well defined holonomy maps as it may not be uniquely integrable. However, as the original plane field was tangent to a foliation and all modifications are performed in smooth regions, this means that the plane field on the boundary of $3$-cells can be integrated to a (possibly non-unique)  foliation for which holonomy is then well-defined. Moreover, this holonomy is canonical once one fixes a foliation tangent to the original $C^0$-plane field. These facts mean that the bulk of the technical work involved in the proof is contained in this last step and one has to be both very careful how, and in which order, one smoothens the underlying plane field.

We also point out that even in the $C^2$-case our approach to approximating foliations by contact structures yields 2 simplifications. The first is that due to the fact that as the contactness is transported along \emph{disjoint} neighbourhoods, these neighbourhoods do not interact meaning so that the corresponding discussion in Petronio \cite{Pet} is no longer needed. Secondly we show that a foliation without holonomy can be approximated by contact structures directly without first being approximated by surface fibrations (unless the foliation is by planes).


Now that one has an approximation result for $C^0$-foliations it is natural to ask how much of the theory extends to this more general setting. For example Vogel \cite{Vog} has recently shown that the isotopy class of an approximating contact structure is unique for $C^2$-foliations without torus leaves, with a short list of exceptions. On the other hand Vogel's proof uses the $C^2$-assumption in an essential way and it seems unlikely that this result should extend to the case of $C^0$-foliations. Several examples related to this question are discussed in Section \ref{sec_discussion}. In a similar vein the author \cite{Bow} recently showed that any contact structure that is sufficiently close to a Reebless foliation of class $C^2$ is universally tight. However, this proof again uses the $C^2$-condition in an essential way. Moreover, the phenomena of phantom Reeb tori highlighted by Kazez and Roberts \cite{KR3} show that the corresponding result for $C^0$-foliations is false. However, the result ought to hold in the case of $C^1$-foliations for which phantom Reeb tori do not appear.

\subsection*{Outline of Paper:} In Section \ref{sec_fol} we review the necessary definitions and basic results from the theory of contact structures and foliations and we also give a (working) definition of a $C^0$-confoliation. Section \ref{sec_smooth} contains technical results about smoothing near the $1$-skeleton of a suitably transverse triangulation as well as near leafwise arcs. Section  \ref{nice_section}  introduces the notion of nice coordinates near closed leafwise embedded curves and in Section \ref{contact_regions} we review how to produce contact structure near curves with attracting holonomy. In Section \ref{jiggling_sec} we discuss jiggling and polyhedral decompositions  \`a  la  Colin and we also prove the main technical results needed to fill in contact structures over the interior of $3$-cells in a controlled way. Section \ref{sec_ribbons} reviews Vogel's theory of ribbons. Finally in Section \ref{sec_proof} we prove Theorem \ref{approx_cont} and in Section \ref{sec_discussion} we discuss various applications and examples.

\subsection*{Acknowledgments:} 
The author was partially supported by DFG Grant BO4423/1-1. We thank the referee for helpful comments.
\medskip

\noindent {\bf Conventions:} Throughout $M$ will denote a smooth, oriented, connected $3$-manifold and this manifold will be closed unless stated otherwise. All foliations are of codimension-$1$ and all measurements will be taken with respect to a {\em fixed} background metric. For any subset $A$ of a manifold $\mathcal{O}p(A)$ will denote some (unspecified) neighbourhood of $A$.

\section{$C^0$-Foliations and minimal sets}\label{sec_fol}
We first recall the definition of a foliation, paying special attention to the various regularity assumptions one needs when considering $C^0$-foliations. We begin with the most general definition possible:
\begin{defn}
A topological codimension-$1$ foliation $\mathcal{F}$ on a $3$-manifold $M$ is a decomposition of the manifold into (topologically) embedded surfaces called {\bf leaves} that is given by a topological atlas whose transition functions $\varphi_i \circ \varphi_j^{-1}$ preserve the planes $\R^2 \times \{pt\}$. If the transition maps are smooth on the $\R^2$-slices, then the foliation is of class $C^{0,\infty}$.

Finally if $\mathcal{F}$ is in addition tangent to a $C^0$-plane field, then we say that it is of class $C^{0,\infty+}$.
\end{defn}
Throughout this article we will also assume that all foliations are {\bf cooriented}, so that the tangent plane field $T\mathcal{F}$  in the case of a foliation of class $C^{0,\infty+}$ can be defined as the kernel of a (continuous) non-vanishing $1$-form.

It is fairly easy to see that if the transition maps are of class at least $C^1$ on leaves and the foliation is transverse to a smooth flow, then one can find a foliated atlas that is leafwise smooth (cf.\ \cite{CC} Corollary 5.15). However, the fact that the foliation is tangent to a continuous plane field is \emph{a priori} much stronger as it implies that the leaves are ($C^1$-)immersed and that (locally) these immersions vary continuously in the $C^1$-topology. 

In higher dimensions, it is not clear how all these conditions fit together, but due to the following straightening theorem of Calegari, we know that any topological foliation by surfaces is  topologically isotopic to one of class $C^{0,\infty+}$. 
\begin{thm}[Calegari \cite{Cal}]\label{Calegari}
Let $\mathcal{F}$ be a topological foliation by surfaces on a $3$-manifold. Then $\mathcal{F}$ is topologically isotopic to one of class $C^{0,\infty+}$. Moreover, one can assume that the leaves are $C^{\infty}$-immersed.
\end{thm}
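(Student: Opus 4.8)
The content of the theorem lies entirely in the tangential direction. In codimension one the transverse coordinate of a foliated chart $\phi_i\co U_i\to\R^2\times I$ ranges over a one-dimensional interval and the plaques $\R^2\times\{t\}$ are already smooth, so the only obstruction to $\mathcal F$ being of class $C^{0,\infty}$ is that the $\R^2$-components $f_{ij,t}$ of the transition maps $\phi_i\circ\phi_j^{-1}(x,t)=(f_{ij,t}(x),g_{ij}(t))$ need only be homeomorphisms of $\R^2$. I would aim to isotope $\mathcal F$, by an ambient isotopy of $M$, until it admits a foliated atlas in which each $\phi_i^{-1}\co\R^2\times I\to M$ is a smooth embedding carrying plaques to smoothly embedded surfaces, with the slicewise restrictions $\phi_i^{-1}|_{\R^2\times\{t\}}$ varying continuously in $t$ in the $C^\infty$-topology. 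From such an atlas all three conclusions follow at once: a leaf is built by gluing smoothly embedded plaques along leafwise-smooth transition maps, hence is $C^\infty$-immersed; and along each plaque $T\mathcal F$ is the image of the tangent space of a $C^\infty$-continuously varying family of embeddings, hence is a continuous plane field, so $\mathcal F$ is of class $C^{0,\infty+}$.

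\textbf{Local step.} Fix a foliated chart $U$ and suppose $\mathcal F$ already carries the desired structure over a neighbourhood of a compact set $K$. Comparing the given topological chart on $U$ with an arbitrary smooth product chart, the discrepancy is recorded by a continuous path $t\mapsto f_t$ in $\mathrm{Homeo}(\R^2)$ — which, after a harmless preliminary choice of the comparison chart, is the identity over $K\cap U$ — together with a homeomorphism of the transverse interval that I leave untouched. Here I would invoke parametrised two-dimensional smoothing theory (smoothability and essential uniqueness of smooth structures on surfaces, and approximability of homeomorphisms of surfaces by diffeomorphisms, in relative and parametrised form over the one-dimensional parameter space, where the smoothing is unobstructed) to produce a continuous family $h_t\in\mathrm{Homeo}(\R^2)$, equal to the identity over $K$, such that $f_t\circ h_t$ is a $C^\infty$-continuous family of diffeomorphisms. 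Post-composing $\phi$ with the slice-preserving homeomorphism $(x,t)\mapsto(h_t(x),t)$ installs the desired structure over $K\cup U$; being supported near $U$ and isotopic to the identity rel boundary, this modification is realised by an ambient isotopy of $M$.

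\textbf{Globalisation.} Cover $M$ by finitely many foliated charts $U_1,\dots,U_N$ and apply the local step inductively with $K=\overline{U_1\cup\dots\cup U_{k-1}}$ at stage $k$, each time keeping the smoothing the identity where the structure is already in place. Composing the finitely many ambient isotopies produces a foliation of the required form, topologically isotopic to $\mathcal F$. Two points need care on overlaps: the holonomy reparametrisations $g_{ij}$ stay merely homeomorphisms throughout (one never modifies the transverse coordinate), which is harmless since smoothness is demanded only along plaques and continuity of the slicewise data is preserved under continuous reparametrisation; and the choice of $h_t$ over $U_i\cap U_j$ must be compatible with the modifications made at earlier stages, which is precisely what the relative form of the smoothing theorem supplies.

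\textbf{Main obstacle.} I expect the crux to be the two-dimensional smoothing input in its relative-and-parametrised form, together with the bookkeeping needed to keep every modification (i) the identity wherever the structure has already been fixed, (ii) mutually compatible on chart overlaps in spite of the only-topological transverse holonomy, and (iii) realised by a genuine ambient isotopy of $M$ rather than an abstract change of atlas. It is also worth checking at the outset that the leaves of a topological foliation are locally flat in $M$ — which is automatic, since in a foliated chart each leaf meets $U_i$ in a slice $\R^2\times\{t\}$ — so that smoothing an individual leaf is unobstructed and the entire difficulty is indeed the transverse coherence.
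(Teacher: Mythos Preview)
The paper does not prove this statement; it is quoted from Calegari \cite{Cal} without proof, so there is no ``paper's own proof'' to compare against. Your sketch is a reasonable outline of the strategy behind Calegari's argument --- the key input is indeed parametrised two-dimensional smoothing (unobstructed because the Hauptvermutung holds for surfaces), applied chart by chart relative to what has already been fixed --- but since the paper treats the theorem as a black box from the literature, any detailed proof here goes beyond what the paper itself supplies.
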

\begin{rem}
It appears that there might be some difference between having $C^1$-immersed or $C^{\infty}$-immersed leaves. However, it is not hard to see that a foliation with $C^1$-immersed leaves and continuous tangent plane field can be approximated (in the tangential sense) by one whose leaves are $C^{\infty}$-immersed (cf.\ Remark \ref{smoothing_attempt} below).
\end{rem}

\noindent {\bf Convention:} From now on a $C^0$-foliation will mean a cooriented foliation of class $C^{0,\infty+}$.

\subsection*{Approximating $C^0$-foliations by other foliations}\label{fol_norm_etc}
Throughout this article we will approximate $C^0$-foliations by other foliations with nicer properties (eg.\ smoothness, nice local product structures...). This will be done locally in a smooth coordinate patch $U \subseteq M$ where the foliation is given as the graphs of a family of functions (with parameter $z$ in an interval $I$)
$$f_z(x,y)\co D^2 \longrightarrow \R $$
and the $C^{0,\infty+}$ condition means that we can assume that the partial derivatives $\frac{\partial   f_z}{\partial x},\frac{\partial   f_z}{\partial y}$ are continuous in the smooth coordinates $(x,y,z)$. Note that the parameter $z$ can be taken as the intersection of a leaf with the $z$-axis and the property of being a foliation is that for fixed $(x_0,y_0) \in D^2$ the function $f_z(x_0,y_0)$ is continuous and strictly monotone in $z$. Two foliations $\mathcal{F},\mathcal{G}$ are close on $U$ if for the associated family of functions
$$\left\| f_z(x,y)  - g_z(x,y) \right\|_{C^0_{Fol}} = \left\|\left(f_z(x,y), \frac{\partial   f_z}{\partial x} \wedge \frac{\partial   f_z}{\partial y}\right) -  \left(g_z(x,y), \frac{\partial   g_z}{\partial x} \wedge \frac{\partial   g_z}{\partial y}\right)  \right\|_0 < \epsilon.$$
Here the wedge product of two vectors denotes the subspace spanned by them in the (oriented) Grassmannian. This is then equivalent to the $C^0$-closeness of the associated foliations and hence we call it the {\bf foliated $C^0$-norm}. In particular, if the functions are $C^1$-close on $U$ in the sense that the partial $C^1$-norm
$$\left\| f_z(x,y)  - g_z(x,y) \right\|_{C^1_{part}} :=\left\|\left(f_z(x,y), \frac{\partial   f_z}{\partial x}, \frac{\partial   f_z}{\partial y}\right) -  \left(g_z(x,y), \frac{\partial   g_z}{\partial x}, \frac{\partial   g_z}{\partial y}\right)  \right\|_0 < \epsilon,$$
then so are the resulting foliations. 
We then have the following cutting-off lemma, that will be essential for approximating $C^0$-foliations on foliated charts:
\begin{lem}[Cutting-off Lemma]\label{loc_principle}
Let $f_z(x,y)\co D^2 \longrightarrow \R$ be associated to a local parametrisation of a $C^0$-foliation $\mathcal{F}$ on some $U \subseteq M $. Suppose that $$f^{n}_z(x,y) \longrightarrow f_z(x,y)$$ in the partial $C^1$-norm and that each $f^{n}_z(x,y)$ also determines a foliation (that is they are monotone in $z$). Then there are foliations $\mathcal{F}_{n} $ converging to $ \mathcal{F}$ in the $C_{Fol}^0$-sense that agree with the foliation determined by $f^{n}_z(x,y)$ on any $V \subseteq U$ with compact closure in $ int(U)$. The result also holds if the initial convergence is only in the $C^0_{Fol}$-sense.
\end{lem}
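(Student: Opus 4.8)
The idea is to glue the new local model to the old foliation by a convex combination of the graph functions weighted by a cut-off, using the elementary fact that if $0\le\rho\le1$ is independent of the leaf parameter $z$ then, at each fixed $(x,y)$, $\rho f^n_z+(1-\rho)f_z$ is a convex combination of two functions strictly increasing in $z$, hence is itself strictly increasing in $z$; so the interpolant still defines a foliation.

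Concretely I would fix $V$ with $\overline V\Subset\mathrm{int}(U)$. Since $\overline V$ projects to a compact subset of the $z$-interval, it suffices to modify leaves whose parameter lies in a compact subinterval. Choose a smooth cut-off $\rho\co U\to[0,1]$ which is $\equiv1$ on a neighbourhood of $\overline V$ and whose support is a compact subset of $\mathrm{int}(U)$ (the precise behaviour of $\rho$ near the frontier of $U$ being the delicate point, see below), set $h^n_z(x,y):=\rho\,f^n_z(x,y)+(1-\rho)\,f_z(x,y)$, and let $\mathcal F_n$ agree with $\mathcal F$ outside $U$ and have plaques $\{z=h^n_z(x,y)\}$ inside $U$. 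That $\mathcal F_n$ is a $C^0$-foliation follows because $h^n_z$ is strictly increasing in $z$ by the convexity remark, is smooth in $(x,y)$ with $\partial_x h^n_z,\partial_y h^n_z$ continuous in $(x,y,z)$ since $\rho,f,f^n$ are, and equals $f_z$ where $\rho=0$; it then agrees with $\mathcal F$ outside $U$ and, where $\rho\equiv1$, with the foliation determined by $f^n$, in particular on $V$.

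For the convergence, $h^n_z-f_z=\rho\,(f^n_z-f_z)$, so $\|h^n-f\|_{C^1_{part}}\le\|\rho\|_{C^1}\,\|f^n-f\|_{C^1_{part}}\to0$, and by the description of the foliated $C^0$-norm preceding the lemma this gives $\mathcal F_n\to\mathcal F$ in the $C_{Fol}^0$-sense. For the final assertion, if the $f^n$ converge to $f$ only in the $C_{Fol}^0$-norm, I would observe that on the compact set $\mathrm{supp}(\rho)$ the tangent planes of $\mathcal F$, hence for $n$ large also those of the foliations of $f^n$, are uniformly bounded away from the vertical, and that on non-vertical planes the Grassmannian coordinate recovers the pair of partial derivatives $(\partial_x,\partial_y)$; thus on $\mathrm{supp}(\rho)$ convergence of the $f^n$ in the $C_{Fol}^0$-norm is equivalent to convergence in the partial $C^1$-norm, and the previous step applies verbatim.

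The step I expect to be the real obstacle is the choice of $\rho$, because it must satisfy two competing demands: monotonicity of $h^n_z$ in $z$ essentially forces $\rho$ to be constant along the leaves, whereas for $\mathcal F_n$ to patch into a \emph{global} foliation of $M$ one needs $h^n_z=f_z$ near the \emph{entire} frontier of $U$, including the ``ends'' of the parameter interval which are transverse to the leaves. Reconciling these—ensuring that every modified plaque $\{z=h^n_z(x,y)\}$ still closes up with the leaf of $\mathcal F$ out of which it was carved—requires working in a sufficiently small chart and letting $\rho$ taper off towards the transverse part of the frontier of $U$ in a controlled way; this is where the bulk of the care in the proof lies, the remainder being bookkeeping.
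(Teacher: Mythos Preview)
Your construction --- the convex interpolation $g_z=(1-\rho)\,f_z+\rho\,f^n_z$ via a bump function, glued to $\mathcal F$ outside $U$ --- is exactly the paper's. The paper's entire proof is two sentences: it takes $\rho\co U\to[0,1]$ with support in $U$, identically $1$ on $\overline V$, writes down $g_z$, and asserts that this is the desired family. In particular the paper lets $\rho$ depend on all three variables $(x,y,z)$, which immediately disposes of the transverse-boundary worry you raise at the end: since $\rho$ has compact support, $g_z=f_z$ once $z$ leaves the $z$-support of $\rho$, and the modified plaques match $\mathcal F$ near the whole frontier of $U$. The paper does not verify monotonicity of $g_z$ in $z$ at all.

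So your last paragraph locates the one genuinely non-trivial point, but the emphasis is backwards. The transverse boundary is not ``where the bulk of the care in the proof lies'' --- the paper handles it for free by allowing $\rho$ to depend on $z$ --- and it is rather the monotonicity in $z$ (which you secured by insisting on $\rho=\rho(x,y)$, and which the paper simply does not discuss) that is the real soft spot. You have correctly seen that these two requirements pull against each other; the paper just chooses the other horn of the dilemma and elides the resulting issue. Your treatment of the convergence estimate and your reduction of the $C^0_{Fol}$ case to the partial $C^1$ case on $\mathrm{supp}(\rho)$ are both elaborations the paper omits entirely.
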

\begin{proof}
Let $\rho\co U \longrightarrow [0,1]$ be a bump function which has support on $U$ and is identically $1$ on $\overline{V}$. Then set $\mathcal{F}_{n}$ to be the foliation given by
$$g_z(x,y) = (1-\rho(x,y,z)) f_z(x,y) + \rho(x,y,z) f^{n}_z(x,y)$$
on $U$ and by $\mathcal{F}$ outside of $U$. This then gives the desired approximating family.
\end{proof}
\subsection*{Confoliations and contact structures}
In the case that a foliation is smooth, meaning that its defining atlas can be chosen so that all transition maps are smooth, then any smooth defining form $\alpha$ for its tangent distribution $T\mathcal{F}$ satisfies
$$\alpha \wedge d \alpha \equiv 0.$$
Conversely, this is equivalent to the existence of a foliation tangent to a given plane field. On the other hand a smooth plane field $\xi = Ker(\alpha)$ is  completely non-integrable or a (positive) {\bf contact structure}, if
$$\alpha \wedge d \alpha > 0.$$
If $\alpha$ satisfies the weaker inequality $\alpha \wedge d \alpha \ge 0$, then $\xi$ is called a (positive) {\bf confoliation}. A negative contact structure resp.\ negative confoliation is one for which the above inequalities are reversed.

\subsection*{Tautness and Reeblessness}
A codimension-$1$ foliation $\mathcal{F}$ on $M$ is {\bf (everywhere) taut} if every point is contained in a smoothly embedded closed curve transverse to $\mathcal{F}$. It is usually customary to require only that each leaf is cut by a closed transversal and in the case of $C^1$-foliations these notions are equivalent, but for $C^0$-foliations it may be weaker (cf. Kazez and Roberts \cite{KR3}). In order to distinguish these definitions (following \cite{KR3}) we will call the latter class of foliations  {\bf smoothly taut}. There are several equivalent conditions for tautness and the following result of Sullivan \cite{Sull} is particularly useful for understanding the nature of contact structures that are close to taut foliations (see also \cite{CC} Proposition 10.4.1).
\begin{lem}
A $C^0$-foliation $\mathcal{F}$ is taut if and only if there is a closed non-vanishing $2$-form so that $\omega|_{T\mathcal{F}} >0$.
\end{lem}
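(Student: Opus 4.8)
The plan is to reduce the statement to Sullivan's duality for foliation currents \cite{Sull} and to observe that his argument uses nothing beyond the smoothness of the leaves and the continuity of $T\mathcal{F}$, so that it carries over verbatim to foliations of class $C^{0,\infty+}$. Recall that a \emph{structure current} is a weak-$*$ limit of normalised positive combinations of the currents of integration over leaf plaques (whose tangent planes vary continuously because $T\mathcal{F}$ is continuous), and that a \emph{foliation cycle} is a closed structure current. Note first that any closed $2$-form $\omega$ with $\omega|_{T\mathcal{F}}>0$ is automatically non-vanishing, as it restricts to a positive area form on each oriented plane $T_x\mathcal{F}$; thus the assertion is that $\mathcal{F}$ is taut if and only if a closed $2$-form positive on $T\mathcal{F}$ exists. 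I would prove this by showing that each of these conditions is equivalent to the statement $(\ast)$: no non-zero foliation cycle is null-homologous in $H_2(M;\R)$.

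\emph{Existence of a positive closed form $\iff(\ast)$.} One direction is immediate: if $\omega$ is closed with $\omega|_{T\mathcal{F}}>0$ and $C$ is a non-zero foliation cycle, then $\int_C\omega>0$ by positivity of $\omega$ on leaf planes, while $\int_C\omega=\langle[\omega],[C]\rangle$ depends only on $[C]$; hence $[C]\neq 0$. For the converse, work in the space of $2$-currents on $M$ with its weak-$*$ topology: after normalising, the structure currents form a compact convex set $\mathfrak{S}$ with $0\notin\mathfrak{S}$, the foliation cycles in $\mathfrak{S}$ form the compact convex subset $\mathfrak{S}\cap\ker\partial$, and the boundaries form a linear subspace $\mathfrak{B}$ contained in the weak-$*$ closed subspace $\ker\partial$, so $\overline{\mathfrak{B}}$ meets $\mathfrak{S}$ only in foliation cycles. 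By Sullivan's structural lemma a foliation cycle in $\overline{\mathfrak{B}}$ is null-homologous, so under $(\ast)$ it would be $0$; hence $\mathfrak{S}\cap\overline{\mathfrak{B}}=\emptyset$, and separating the compact convex set $\mathfrak{S}$ from the closed subspace $\overline{\mathfrak{B}}$ by a continuous linear functional yields a smooth $2$-form $\omega$ with $\int_C\omega>0$ for all $C\in\mathfrak{S}$ and $\int_{\partial T}\omega=0$ for every $3$-current $T$. The latter gives $d\omega=0$, and testing the former against the current of an arbitrarily small leaf disk through an arbitrary point forces $\omega|_{T\mathcal{F}}>0$.

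\emph{Tautness $\iff(\ast)$.} For the forward implication, suppose $\mathcal{F}$ is taut and $C$ is a null-homologous foliation cycle. A foliation cycle of a cooriented codimension-$1$ foliation corresponds to a holonomy-invariant transverse measure $\nu$ carried by a closed saturated set $K$; if $K\neq\emptyset$, choose $p\in K$ and a closed transversal $\gamma$ through $p$. Since $\gamma$ is transverse to the cooriented $\mathcal{F}$, the pairing of $\dot\gamma$ with a defining $1$-form has constant sign, so after reorienting $\gamma$ is positively transverse; then the homological intersection $[\gamma]\cdot[C]$ equals the total $\nu$-mass crossed by $\gamma$, which is positive because $\gamma$ passes through $\mathrm{supp}(\nu)$, whereas $[C]=0$ forces $[\gamma]\cdot[C]=0$. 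This contradiction shows $K=\emptyset$ and $C=0$. For the reverse implication in contrapositive form, assume $\mathcal{F}$ is not taut, so some $p$ lies on no closed transversal. Fixing a flow transverse to $\mathcal{F}$ (available since $\mathcal{F}$ is cooriented), I would run the Schwartzmann--Sullivan construction: fatten long orbit segments issuing from $p$ into chains of plaque currents, normalise, and extract a weak-$*$ convergent subsequence with a minimal-set argument; the limit is a non-zero foliation cycle, and it bounds because each fattened orbit segment bounds the $3$-chain it sweeps out up to an error that tends to $0$. This violates $(\ast)$.

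The genuine obstacles are the two places where Sullivan's work is concentrated: the structural lemma behind the Hahn--Banach step --- that a closed structure current lying in the closure of the boundaries actually bounds --- and the Schwartzmann--Sullivan production of a bounding foliation cycle from a point on no closed transversal. Everything else (compactness of $\mathfrak{S}$, the intersection-number computation, and especially the verification that the whole argument only uses $C^{0,\infty+}$ regularity) is routine, which is precisely why the lemma is valid for $C^0$-foliations; compare also \cite{CC}.
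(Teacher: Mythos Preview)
The paper does not actually prove this lemma: it is stated with a citation to Sullivan \cite{Sull} (see also \cite{CC}, Proposition~10.4.1), together with a remark warning that the version appearing in \cite{CC} --- phrased for \emph{smoothly} taut rather than \emph{everywhere} taut foliations --- is false in the $C^0$ setting by \cite{KR3}. So your sketch already goes further than the paper, and your overall strategy of reducing both conditions to the intermediate statement~$(\ast)$ via Sullivan's cone/Hahn--Banach duality is exactly the right one. Your arguments for ``form exists $\iff(\ast)$'' and for ``everywhere taut $\Rightarrow(\ast)$'' are correct as written.

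The contrapositive direction ``not everywhere taut $\Rightarrow$ there is a bounding foliation cycle'' has a real gap, however. What you call the Schwartzmann--Sullivan construction is a $1$-current construction for flows, and it does not transplant to codimension-one foliation cycles, which are $2$-currents tangent to $T\mathcal F$. Concretely: if you ``fatten'' a transverse orbit segment to a solid tube and take its boundary $2$-cycle, that cycle is tangent to the transverse direction, so it is not a structure current at all; if instead you form the integrated plaque current $C_T=\frac1T\int_0^T[D_t]\,dt$ you do get a structure current, but $\partial C_T=\frac1T\int_0^T[\partial D_t]\,dt$ has mass bounded \emph{below} (each $\partial D_t$ contributes the same length), so the normalised boundary does not tend to zero and the weak-$*$ limit need not be closed. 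Neither reading yields a bounding foliation cycle.

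This is precisely the implication where the ``smoothly taut'' versus ``everywhere taut'' subtlety from \cite{KR3} lives, so one should expect it to be delicate. In Sullivan's treatment the exact foliation cycle is produced from the structure of the dead-end region rather than from transverse orbit segments; alternatively one argues ``form $\Rightarrow$ everywhere taut'' directly by observing that $\ker\omega$ is a transverse line field whose flow preserves $\omega$, so $\omega$ restricts to an invariant area form on any local section, and then combining recurrence for the first-return map with a closing/isotopy argument to produce a closed transversal through an arbitrary prescribed point. Either route requires a genuinely two-dimensional argument, not the one-dimensional Schwartzmann averaging you invoke.
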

\begin{rem}
Note that in the reference \cite{CC}, the lemma above is stated for smoothly taut foliations, but this is incorrect due to the examples exhibited in \cite{KR3}. However as any smoothly taut foliation can be $C^0$-approximated by one that is everywhere taut it is true for a \emph{generic} taut foliation.
\end{rem}

A slightly weaker condition than tautness is that a foliation $\mathcal{F}$ has no Reeb components, i.e.\ there are no torus leaves bounding solid tori whose interiors are foliated by planes. In this case the foliation is called {\bf  Reebless}. The condition of Reeblessness puts restrictions on the topology of the leaves of the foliation as well as the underlying manifold itself. For the $C^2$-case this is due to Novikov \cite{Nov} and was extended to $C^0$-foliations by Solodov \cite{Sol}. Note that in the case that the foliation is of class $C^{0,\infty+}$ the original proof of Novikov in fact generalises in a more or less direct fashion.
\begin{thm}[Novikov, Solodov]\label{Novikov}
Let $\mathcal{F}$ be a Reebless $C^0$-foliation on a $3$-manifold which is not the product foliation on $S^2 \times S^1$. Then the inclusion of any leaf $L \hookrightarrow M$ is $\pi_1$-injective, $\pi_2(M) = 0$ and all transverse loops are essential in $\pi_1(M)$. In particular, $\pi_1(M)$ is infinite.
\end{thm}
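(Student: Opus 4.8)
The plan is to reduce the $C^{0,\infty+}$ statement to the classical Novikov theorem for $C^2$-foliations by a smoothing argument, and then to extract the stated algebraic-topological consequences from the $C^2$ version. First I would invoke the fact that a Reebless foliation of class $C^{0,\infty+}$ has no \emph{vanishing cycle} in the sense of Novikov: if there were a leafwise nullhomotopic essential loop, the Novikov closed-orbit construction (which is local and needs only that leaves are $C^1$-immersed and vary continuously in the $C^1$-topology, exactly what $C^{0,\infty+}$ guarantees) would produce a Reeb component. This is the heart of the matter and essentially the observation the paper attributes to Solodov and to a direct generalization of Novikov's original argument. Alternatively, and perhaps cleaner, I would use Calegari's straightening theorem (Theorem \ref{Calegari}) together with the cutting-off technology of Lemma \ref{loc_principle} to $C^0$-approximate $\mathcal{F}$ by a genuinely smooth (hence $C^2$) foliation $\mathcal{F}'$, apply the classical Novikov--Solodov theorem to $\mathcal{F}'$, and then transport the conclusions back along the approximation; but one must be careful that Reeblessness is not obviously preserved under $C^0$-approximation, so the direct route via vanishing cycles is safer.

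Granting the no-vanishing-cycle statement, the $\pi_1$-injectivity of a leaf $L\hookrightarrow M$ follows by the standard argument: a nontrivial element of $\ker(\pi_1(L)\to\pi_1(M))$ is represented by a loop that bounds a disk in $M$; putting this disk in general position with respect to $\mathcal{F}$ and running Haefliger's argument on the induced singular foliation of the disk produces a vanishing cycle — contradiction. For $\pi_2(M)=0$, I would pass to the universal cover $\widetilde M$, which carries the pulled-back Reebless foliation $\widetilde{\mathcal{F}}$; by Palmeira's theorem the leaves of $\widetilde{\mathcal{F}}$ are planes (they are simply connected by $\pi_1$-injectivity, and a leafwise-smooth simply connected leaf with no holonomy subtleties is a plane), and Palmeira then identifies $\widetilde M$ with $\R^3$, whence $\pi_2(M)=\pi_2(\widetilde M)=\pi_2(\R^3)=0$. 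Here again one needs the $C^{0,\infty+}$ hypothesis to ensure the leaves are honest $C^1$-submanifolds so that Reeb stability / Palmeira-type arguments apply; this is exactly why the paper restricts to this regularity class.

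For the transverse-loop statement: a closed transversal $\gamma$ that were nullhomotopic in $M$ would lift to a closed transversal in $\widetilde M\cong\R^3$; but in $\R^3$ foliated by the planes of $\widetilde{\mathcal{F}}$ there is a well-defined proper "height" function (the leaf space is a simply connected, possibly non-Hausdorff, $1$-manifold, and along any transversal it is strictly monotone), so no transversal can close up — contradiction. Finally $\pi_1(M)$ infinite is immediate: if $\pi_1(M)$ were finite then $\widetilde M$ would be a closed (compact) manifold, contradicting $\widetilde M\cong\R^3$; equivalently, a closed manifold foliated by planes cannot exist since Euler-characteristic / Poincaré–Hopf considerations on the tangent plane field already obstruct it, and a Reebless foliation on a manifold with finite $\pi_1$ would force such a configuration upstairs.

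The main obstacle I anticipate is making the no-vanishing-cycle step fully rigorous in the $C^{0,\infty+}$ setting: Novikov's original proof uses holonomy and leafwise geodesics in an essential way, and one must check that the continuity of the tangent plane field (rather than smoothness of the whole foliated atlas) is enough to run the minimal-disk / vanishing-cycle machinery and to guarantee that the limiting cycle is genuinely a vanishing cycle bounding a Reeb component. The cleanest fix is to note that $C^{0,\infty+}$ leaves are $C^\infty$-immersed (Theorem \ref{Calegari}) and vary continuously in $C^1$, which is precisely the input Haefliger's and Novikov's arguments require; everything else is then a routine translation of the classical proof, and for the remaining items ($\pi_2=0$, transverse loops essential, $\pi_1$ infinite) one simply cites Palmeira and Reeb stability in the form valid for $C^1$-leaved foliations.
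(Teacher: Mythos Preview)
The paper does not give its own proof of this theorem: it is stated with attribution to Novikov and Solodov, together with the remark that for foliations of class $C^{0,\infty+}$ ``the original proof of Novikov in fact generalises in a more or less direct fashion.'' So there is no detailed argument in the paper to compare against; the only content is the assertion that the $C^{0,\infty+}$ regularity (leaves $C^\infty$-immersed, tangent plane field continuous) suffices to run Novikov's classical vanishing-cycle machinery verbatim. Your ``direct route via vanishing cycles'' is exactly this, and your identification of the key technical point --- that Haefliger's general-position argument on a disc and Novikov's limiting construction need only $C^1$-immersed leaves varying continuously in $C^1$ --- is the right diagnosis.

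Your ``alternative, and perhaps cleaner'' route, however, is not merely risky but actually unavailable. The paper is explicit (see Remark~\ref{smoothing_attempt}) that a $C^0$-foliation can in general \emph{not} be $C^0$-approximated by a smooth foliation; the obstruction is that transition maps between foliated charts are only continuous in the transverse parameter, so local smoothings cannot be patched. Calegari's Theorem~\ref{Calegari} only upgrades a topological foliation to one of class $C^{0,\infty+}$, not to $C^2$, and the cutting-off Lemma~\ref{loc_principle} is a local tool. So the reduction-to-$C^2$ strategy should be discarded entirely, not just flagged as needing care about Reeblessness.

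One further comment: your deduction of $\pi_2(M)=0$ via Palmeira is a different (and historically later) argument than Novikov's own. Novikov argues directly that a nontrivial class in $\pi_2(M)$, represented by a map of $S^2$ in general position, forces a vanishing cycle and hence a Reeb component; this is the version that ``generalises in a more or less direct fashion'' to $C^{0,\infty+}$. Invoking Palmeira requires checking that his theorem holds at this regularity, which is plausible but is an additional input not covered by the paper's remark.
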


\subsection*{Vanishing cycles and closed leaves}
Novikov  \cite{Nov} has shown that the existence of a Reeb component is equivalent to that of a vanishing cycle:
\begin{defn}
An embedded curve $\sigma_0\co S^1 \longrightarrow L_0 $ lying on a leaf of a foliation $\mathcal{F}$ of a manifold $M$ is called a {\bf vanishing cycle} if there is an embedding $\sigma_t \co S^1 \times [0,\epsilon] \longrightarrow M$ so that for fixed $t$ the image $\gamma_t = \sigma_t(S^1)$ lies on a leaf $L_t$ and $\gamma_t$ is contractible in $L_t$ but $\sigma_0$ is not contractible in $L_0$.
\end{defn}
\noindent We will only need a slightly weaker version of Novikov's result about vanishing cycles and Reeb components, which is most easily seen using Sullivan's theory of foliation cycles. For completeness we briefly recall the argument (cf.\ \cite{CC} pp.\ 259-61).
\begin{prop}[Novikov]\label{nov_closed}
Let $\sigma_0 \co S^1 \longrightarrow M$ be a vanishing cycle for a foliation $\mathcal{F}$. Then $\gamma_0 = \sigma_0(S^1)$ lies on a closed leaf $L_0$ of $\mathcal{F}$ (which is necessarily a torus). 
\end{prop}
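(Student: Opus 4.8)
The plan is to use Sullivan's theory of foliation cycles to produce a closed leaf directly from the vanishing cycle, following the argument sketched in \cite{CC}. The key point is that a vanishing cycle provides a sequence of loops $\gamma_t$, each contractible on its own leaf, whose lengths (with respect to the fixed background metric) do not shrink to zero, since $\sigma_t$ is an embedding on $S^1\times[0,\epsilon]$; meanwhile $\gamma_0=\sigma_0(S^1)$ is not nullhomotopic on $L_0$. The strategy is to run each $\gamma_t$ through a leafwise isoperimetric/shrinking process and extract, in the limit, a nontrivial foliation cycle supported on a single compact leaf.

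First I would set up the leafwise geometry: fix the ambient metric, restrict it to each leaf, and for each $t>0$ let $D_t\subset L_t$ be a leafwise disk bounded by $\gamma_t$ (which exists since $\gamma_t$ is contractible in $L_t$). One then runs a leafwise curve-shortening or minimal-disk argument to replace $\gamma_t$ by a loop $\gamma_t'$ on $L_t$ that is still nullhomotopic on $L_t$, is still freely homotopic to $\gamma_t$ within an ambient neighbourhood, but now has leafwise length bounded by a fixed constant independent of $t$ and, crucially, bounds a leafwise disk of bounded area. (Here one uses that the $\gamma_t$ live in a fixed compact region, the image of $\sigma_t$.) Passing to a subsequence $t_n\to 0$, the loops $\gamma_{t_n}'$ converge in the Hausdorff sense — and their associated integration currents converge weakly — to a current $C$ supported on $L_0$ (by continuity of the foliation and of the tangent plane field $T\mathcal F$, limit points lie on $L_0$). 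The length lower bound guarantees $C\neq 0$.

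The main obstacle — and the heart of the argument — is showing that this limit current $C$ is a nontrivial \emph{foliation cycle}, i.e.\ a closed current that is a nonnegative combination of leafwise directions, and that its support is a \emph{compact} leaf. Closedness comes from the fact that each $\gamma_{t_n}'$ bounds a leafwise disk $D_{t_n}'$ of uniformly bounded area: in the limit the boundaries converge to $C$ while the disks converge to a current $S$ supported in $\mathcal F$ with $\partial S = C$, so $C$ bounds within the foliation; combined with the structure theorem for foliation cycles (Sullivan), a nonzero such $C$ forces the existence of a compact leaf in its support — which must therefore be $L_0$ itself, or rather $L_0$ is contained in a compact leaf; a short argument using that $\gamma_0$ is an embedded curve on the leaf through $\gamma_0$ pins this down to $L_0$ being closed. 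The last step is the standard classification fact: a compact leaf of a codimension-$1$ foliation carrying a nontrivial foliation cycle that is a vanishing cycle must be a torus (its Euler characteristic is forced to vanish, e.g.\ because the vanishing cycle exhibits trivial infinitesimal holonomy on one side and the leaf is two-sided), giving the parenthetical conclusion. I would handle the curve-shortening/bounded-area input by citing the relevant leafwise isoperimetric estimates rather than reproving them, and otherwise follow \cite{CC}, pp.\ 259--61, closely.
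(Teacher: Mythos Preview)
Your proposal has a genuine structural gap: you are working with the wrong objects and the wrong direction of the area estimate.

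First, the currents. In Sullivan's theory a \emph{foliation cycle} for a $2$-dimensional foliation on a $3$-manifold is a closed $2$-current, built from (limits of) integration over pieces of leaves. The limits of your $1$-currents $\gamma'_{t_n}$ are not foliation cycles in this sense, so ``the structure theorem for foliation cycles'' does not apply to your $C$. The paper instead builds the current from the \emph{discs}: one sets
\[
c_t(\omega)=\frac{1}{\mathrm{Area}(D_t)}\int_{D_t}\omega,
\]
passes to a weak limit $c_\infty$, and checks that $c_\infty$ is a nontrivial foliation $2$-cycle.

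Second, and more seriously, your plan to replace $\gamma_t$ by curves bounding leafwise discs of \emph{uniformly bounded} area cannot succeed. The entire content of the vanishing cycle hypothesis is that $\gamma_0$ does not bound on $L_0$; if the discs $D'_t$ had bounded area one could extract a limiting leafwise disc bounded by $\gamma_0$, a contradiction. The paper uses precisely the opposite fact: $\mathrm{length}(\partial D_t)$ stays bounded while $\mathrm{Area}(D_t)\to\infty$. This is what makes $c_\infty$ closed (the normalised boundary term vanishes).

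Finally, you are missing the step that actually produces the compact leaf. A nontrivial foliation cycle need not be supported on compact leaves; what forces this is that $c_\infty$ is \emph{exact}, i.e.\ vanishes on all closed $2$-forms. The paper obtains exactness by observing that $D_t$ is homologous to $D_{t_0}\cup A_{t_0}$ (a fixed disc plus the thin annulus between $\sigma_{t_0}$ and $\sigma_t$), which has bounded area, so $\tfrac{1}{\mathrm{Area}(D_t)}\int_{D_t}\omega_{cl}\to 0$. Sullivan's Proposition~II.16 then gives that an exact foliation cycle is supported on a compact leaf, and Goodman's argument shows such a barrier leaf is a torus. Your outline has no analogue of this exactness argument.
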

\begin{proof}
Let $D_t$ denote the disc bounding $\sigma_t$ for $t >0$. Define currents $c_t \in \left(\Omega_2(M)\right)' = \mathcal{D}_2'$ as follows:
$$c_{t}(\omega) = \frac{1}{Area(D_{t})}\int_{D_{t}} \omega,$$
where the area is measured with respect to a fixed Riemannian metric. Then since the lengths of $\partial D_t$ are bounded and $Area(D_t)$ is unbounded (\cite{CC} p. 260), there is a subsequence $t_n \nearrow 1$ so that $c_{t_n} \longrightarrow c_{\infty}$ converges in the weak sense. Moreover, $c_{\infty}$ is a (non-trivial) foliation cycle in the sense of Sullivan  (\cite{CC} Theorem 10.2.22). In fact $c_{\infty}$ is exact, meaning that it is trival on all closed forms. To see this note that the disc $D_t$ is homologous to a disc $D'_t = D_{t_0} \cup A_{t_0}$, where $A_{t_0}$ is the annulus between $\sigma_{t_0}$ and $\sigma_t$. In particular, $Area(D'_t)$ is bounded, so that
$$\frac{1}{Area(D_t)}\int_{D_t} \omega_{cl} = \frac{1}{Area(D_t)}\int_{D'_t} \omega_{cl}\longrightarrow 0$$
for any closed $2$-form $\omega_{cl}$. Thus $c_{\infty}$ must have support on a compact leaf (\cite{Sull}, Proposition II.16), which is a barrier in the sense of Novikov in that it admits no closed transversal. By an argument of Goodman such a leaf must be a torus (cf.\ \cite{CC} Theorem 6.3.5).
\end{proof}

\subsection*{Minimal sets}
An important step in proving that a foliation can be approximated by contact structures is to understand the structure of its minimal sets:
\begin{defn}
A subset $M_*$ in a foliated manifold is called {\bf saturated}, if for any point $x \in M_*$ the leaf $L_x$ through $x$ is also contained in $M_*$.

A non-empty closed saturated subset $M_*$ is called {\bf minimal} if it contains no smaller non-empty closed saturated subsets.
\end{defn}
There are three possibilities for a minimal set. Either
\begin{itemize}
\item $M_*$ is a compact leaf,
\item $M_* = M$ in which case the foliation $\mathcal{F}$ is called {\bf minimal},
\item if $M_*$ is not a compact leaf and $M_* \neq M$, then $M_*$ is called an {\bf exceptional minimal set}.
\end{itemize}
Note that the closure of any leaf $\overline{L}$ is a saturated subset and by Zorn's lemma contains at least one minimal set. Note also that an exceptional minimal set cannot be contained in an (injectively leafwise immersed) foliated product $L \times [0,\epsilon]$, where $L$ may be non-compact, since any closed saturated subset has a bottom-most leaf that cannot accumulate on any other leaf in the product. This then contradicts minimality, since every leaf $L_* \subset M_*$ in a minimal set is dense in the minimal set.

In order to manufacture holonomy it will be important to be able to find non-simply connected leaves in minimal sets. Fortunately minimal sets consisting entirely of planes are well understood (see also \cite{Li}). The following result was observed by Gabai, who reduced it to a result of Imanishi.
\begin{lem}[Gabai \cite{Gab}, Imanishi \cite{Ima}]\label{Gabai_planes}
Let $\mathcal{F}$ be a Reebless $C^0$-foliation on a manifold $M$ that has a minimal set all of whose leaves are planes. Then $M = T^3$ and  $\mathcal{F}$ itself is a foliation by planes, which is semi-conjugate to a smooth foliation.
\end{lem}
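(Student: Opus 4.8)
The plan is to reduce to Imanishi's structure theorem for foliations of manifolds by planes. First I would observe that since $\mathcal{F}$ is Reebless and its leaves are planes, Theorem \ref{Novikov} (Novikov--Solodov) applies: the leaves are $\pi_1$-injective, so they really are topological planes in the universal cover sense, $\pi_2(M)=0$, and all transverse loops are essential; in particular $\pi_1(M)$ is infinite and $M$ is irreducible (and not $S^2\times S^1$). Next I would argue that because $M$ has a minimal set $M_*$ consisting entirely of planes, in fact \emph{every} leaf of $\mathcal{F}$ must be a plane. The point is that any leaf $L$ accumulates (by Zorn's lemma applied to $\overline{L}$) on some minimal set; if $L$ itself were non-simply connected it would carry an essential loop, and by continuity of the plane field this loop persists on nearby leaves, which would force leaves inside a minimal set to be non-simply connected as well. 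More directly: a minimal set all of whose leaves are planes cannot sit inside a foliated product by the remark preceding the lemma, so it is either a compact leaf (impossible, a plane is non-compact) or all of $M$; hence $\mathcal{F}$ is minimal and, by density, every leaf is a plane.

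Once $\mathcal{F}$ is a foliation of the closed $3$-manifold $M$ all of whose leaves are planes, I would invoke Imanishi's theorem \cite{Ima}: such a foliation is topologically conjugate, after collapsing the (at most countably many) product regions, to a foliation defined by a nonsingular closed $1$-form, and in particular $M$ fibers or is finitely covered in a way that forces $M=T^3$. Concretely, a foliation by planes has trivial holonomy, so it is \emph{without holonomy}; by the Tischler-type argument (or directly Imanishi's analysis of the leaf space, which is a simply connected, possibly non-Hausdorff $1$-manifold), the leaf space of the pullback to the universal cover is $\mathbb{R}$, the deck group acts on it, and the holonomy-free condition together with the fact that all leaves are planes pins down $\widetilde{M}=\mathbb{R}^3$ with the foliation by horizontal planes and $\pi_1(M)$ a cocompact subgroup of $\mathbb{R}^3\rtimes(\text{something})$; Reeblessness rules out any nontrivial monodromy, leaving $\pi_1(M)\cong\mathbb{Z}^3$ and $M=T^3$. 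The "semi-conjugate to a smooth foliation" clause is exactly the output of Imanishi's collapsing procedure: collapsing the maximal product-foliated regions (open saturated sets homeomorphic to $L\times(0,1)$) yields a foliation with no such regions, which by Imanishi is the kernel foliation of a closed nonsingular $1$-form on $T^3$, hence smooth.

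The main obstacle I anticipate is the passage from "a minimal set consisting of planes" to "all leaves are planes and $M=T^3$", i.e.\ correctly citing and assembling the Imanishi machinery; the subtlety is that a $C^0$-foliation need not have well-defined holonomy germs, but here the leaves being planes means there is simply no holonomy to worry about (a contractible leaf in a continuous plane field has trivial germinal holonomy along any loop, trivially, since there are no essential loops), so the classical arguments go through. The only genuinely delicate point is checking that the product regions being collapsed are "tame" enough that the quotient is again a (topological, then smoothable) foliation — this is precisely what Imanishi \cite{Ima} establishes, so citing it is legitimate. Everything else — irreducibility, $\pi_1$-injectivity, ruling out $S^2\times S^1$ — is immediate from Theorem \ref{Novikov}.
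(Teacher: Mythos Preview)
Your reduction to Imanishi is fine, but the step where you conclude that \emph{every} leaf of $\mathcal{F}$ is a plane has a genuine gap, and this is exactly where the paper invokes Gabai.

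You offer two arguments for this step, and neither works. First, you say that an essential loop on a leaf $L$ ``persists on nearby leaves'' and so would force leaves of $M_*$ to be non-simply connected. But a loop pushed to a nearby leaf can very well become null-homotopic there --- that is precisely the vanishing-cycle phenomenon --- so there is no contradiction. (Also, $\overline{L}$ contains \emph{some} minimal set, not necessarily $M_*$; the hypothesis says nothing about other minimal sets.) Second, you invoke the trichotomy and try to rule out the exceptional case by citing the remark that an exceptional minimal set ``cannot sit inside a foliated product''. That remark says exceptional minimal sets are \emph{not} contained in products; it does not say they \emph{are}, so it cannot be used to exclude the exceptional case. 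An exceptional minimal set consisting entirely of planes is not ruled out by anything you have written.

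The paper's route is different: since $\mathcal{F}$ is Reebless, $M_*$ is an essential lamination, and Gabai's result on essential laminations by planes says the complementary regions of $M_*$ are products $\mathbb{R}^2 \times (0,1)$. Any leaf of $\mathcal{F}$ lying in such a simply connected region is then itself a plane, so $\mathcal{F}$ is a foliation by planes, and Imanishi finishes. You need this structural input about the complement of $M_*$ (or something equivalent); it does not follow from the trichotomy or from local persistence of loops.
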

\noindent Since the formulation given above is not taken directly from either of the references, we provide some explanation. Gabai \cite{Gab} considered essential laminations by planes. The complement of such a lamination consists of product regions that can be filled in to obtain a $C^0$-foliation by planes. In particular, any exceptional minimal set of a Reebless foliation $\mathcal{F}$, which is an essential lamination more or less by definition, is such that its complement consists of product regions $\mathbb{R}^2 \times [0,\epsilon]$. These regions are then foliated by planes and, in particular, $\mathcal{F}$ itself must be a foliation by planes. Generalising a classical result of Rosenberg for $C^2$-foliations, Imanishi \cite{Ima} showed that such a foliation can only occur on $T^3$ and is semi-conjugate to a linear foliation.

 In fact Lemma \ref{Gabai_planes} is also true without the assumption that the foliation is Reebless.
\begin{lem}\label{lem_ima}
Let $\mathcal{F}$ be a $C^0$-foliation on a manifold $M$ that has a minimal set all of whose leaves are planes. Then $M = T^3$, and $\mathcal{F}$ itself is a foliation by planes, which is semi-conjugate to a smooth foliation.
\end{lem}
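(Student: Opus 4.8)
The plan is to reduce Lemma \ref{lem_ima} to the Reebless case, i.e.\ to Lemma \ref{Gabai_planes}, by showing that a foliation admitting a minimal set $M_*$ consisting entirely of planes cannot contain a Reeb component. Once this is established, Lemma \ref{Gabai_planes} applies verbatim and yields $M = T^3$ together with the semi-conjugacy to a linear foliation.

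\medskip

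First I would argue by contradiction: suppose $\mathcal{F}$ has a minimal set $M_*$ all of whose leaves are planes, but that $\mathcal{F}$ is \emph{not} Reebless. By Proposition \ref{nov_closed} (Novikov), the presence of a Reeb component produces a vanishing cycle, and hence a closed leaf $L_0$ which is a torus. In particular $\mathcal{F}$ has a compact torus leaf $T_0$. Now I would examine the position of $M_*$ relative to $T_0$: since $T_0$ is a compact leaf, it is itself a saturated set, and the saturated set $M_*$ is disjoint from $T_0$ (a plane leaf is distinct from the torus $T_0$, and two distinct leaves are disjoint). The idea is to follow the leaves of $M_*$ toward $T_0$: pick a point $x \in M_*$ and a leaf $P = L_x$, a plane. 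Since $M_*$ is closed and $\mathcal{F}$ has a torus leaf, one can find a transverse arc or, better, use the trapping behaviour of the Reeb component. The cleanest route is: a Reeb component is bounded by a torus $T_0$ and contains leaves (planes) accumulating only on $T_0$; any minimal set meeting the interior of the Reeb component must therefore be contained in the closed solid torus, forcing $M_* \subseteq \overline{N}$ where $N$ is the open Reeb solid torus. But then $M_*$ would be a closed saturated subset of the Reeb component's closure whose leaves are planes, and minimality forces $M_*$ to be a single plane leaf or to accumulate; since every leaf of $M_*$ is dense in $M_*$ and planes in the Reeb component spiral onto $T_0 \notin M_*$, this is impossible. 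If instead $M_*$ lies outside every Reeb component, then $M_*$ is contained in the Reebless part and one can invoke the essential-lamination argument as in the discussion following Lemma \ref{Gabai_planes}: the complement of $M_*$ in $M$ consists of product regions $\mathbb{R}^2 \times [0,\epsilon]$ together with the Reeb pieces; but the plane leaves filling these product regions cannot limit onto a torus leaf without creating a half-Reeb component, and tracking this contradicts the plane-ness of leaves near $M_*$.

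\medskip

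The cleanest packaging is probably this: show directly that \emph{no leaf of $\mathcal{F}$ can be a sphere or a torus bounding a solid torus foliated by planes}, given that some minimal set consists of planes. Indeed, as noted in the remark after Theorem \ref{Novikov} (the commented homological criteria) and in the paragraph after Lemma \ref{Gabai_planes}, an exceptional minimal set by planes is an essential lamination whose complementary regions are products $\mathbb{R}^2 \times [0,\epsilon]$; filling these in gives a \emph{foliation by planes} of all of $M$. A foliation by planes has no sphere leaves and no Reeb components at all (a plane leaf cannot bound a ball, and a Reeb component requires a torus leaf). Hence $\mathcal{F}$ is automatically Reebless, and Lemma \ref{Gabai_planes} applies to give $M = T^3$ with $\mathcal{F}$ semi-conjugate to a smooth (linear) foliation. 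One must be slightly careful when $M_* = M$ (the minimal case) or when $M_*$ is a compact leaf: a compact leaf that is a plane is impossible (planes are non-compact), and if $M_* = M$ then every leaf is a plane and the same product-region/essential-lamination reasoning — now with empty complement — again shows $\mathcal{F}$ is a plane foliation, so Imanishi's theorem applies.

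\medskip

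The main obstacle I expect is making rigorous the claim that ``the complementary regions of the plane-leaved minimal set are products $\mathbb{R}^2\times[0,\epsilon]$, hence $\mathcal{F}$ is globally a foliation by planes.'' This is exactly the content imported from Gabai's work on essential laminations by planes and requires knowing that an exceptional minimal set of a (not necessarily Reebless) $C^0$-foliation is still an essential lamination — the subtlety being that \emph{a priori} we have not yet excluded Reeb components, and essentiality of laminations is usually argued in a Reebless setting. The resolution is that Gabai's argument for laminations by planes is local to the complementary regions and only uses incompressibility of the leaves of the lamination, which holds because a plane leaf is automatically incompressible in any leaf of the ambient foliation; the argument does not see Reeb components elsewhere. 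Spelling this out carefully — and noting that it only remains to handle the possibility that $M_*$ is a single plane leaf, which is absurd since planes are non-compact and a single leaf cannot be closed as a subset unless it is compact — completes the reduction, after which Lemma \ref{Gabai_planes} finishes the proof.
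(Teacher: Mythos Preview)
Your overall strategy --- reduce to the Reebless case so that Lemma \ref{Gabai_planes} applies --- is the same as the paper's, and you correctly identify the crux: one must justify that the plane-leaved minimal set $M_*$ is an \emph{essential} lamination before invoking Gabai's product-region conclusion. You also correctly argue (in your first pass) that $M_*$ is disjoint from every Reeb component, since the only minimal set meeting a Reeb solid torus is its boundary torus.

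However, your proposed resolution of the obstacle is a genuine gap. Essentiality of a lamination requires more than $\pi_1$-injectivity of the leaves: one also needs the metric completions of the complementary regions to be irreducible and end-incompressible. For a minimal set of a \emph{Reebless} foliation these follow from Novikov's theorem, but in the presence of Reeb components there is no a priori reason the complementary region of $M_*$ containing a Reeb solid torus should be a product $\mathbb{R}^2 \times (0,1)$ --- and that is precisely what you want to conclude. Your claim that ``Gabai's argument only uses incompressibility of the leaves'' is not correct; the product-region conclusion genuinely uses the full essentiality hypothesis, so as written the argument is circular. The vaguer first pass (``the complement of $M_*$ consists of product regions together with the Reeb pieces'') simply asserts what has to be proved.

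The paper supplies the missing idea via a localisation-and-doubling trick. Using compactness of the set of compact leaves, decompose $M = M_{Ator} \cup M_{Tor}$ where $M_{Tor}$ is a union of foliated $I$-bundles over tori and the interior of $M_{Ator}$ has no torus leaves; then $M_*$ lies in the interior of some component $C$ of $M_{Ator}$. Form the double $2C$ with the doubled foliation $\overline{\mathcal{F}}$, whose only torus leaves are the $T_i \subset \partial C$. Now $\overline{\mathcal{F}}$ is Reebless: if some $T_i$ bounded a Reeb component in $2C$, then $C$ itself would be a Reeb solid torus, whose unique minimal set is its boundary torus --- contradicting $M_* \subset \mathrm{int}(C)$. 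Hence $M_*$ is an essential lamination in $2C$, Gabai gives product complementary regions $\mathbb{R}^2 \times (0,1)$, and the incompressible tori $T_i$ cannot live there. Thus $\partial C = \emptyset$, so $\mathcal{F}$ had no torus leaves and was Reebless from the start. This doubling manoeuvre is exactly what your sketch is missing.
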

\begin{proof}
Using the fact that the set of compact leaves is compact, one can decompose $M$ into a union of two possibly disconnected codimension-0 submanifolds $M = M_{Ator} \cup M_{Tor}$ with toroidal boundary consisting of leaves (including the boundaries of Reeb components), so that $\mathcal{F}$ is without torus leaves on the interior of $M_{Ator}$ and $M_{Tor}$ is a union of foliated $I$-bundles over $T^2$. 

Now any minimal set $M_*$ all of whose leaves are planes must be contained in the \emph{interior} of a connected component $C$ of $M_{Ator}$. The boundary of $C$ is then a union of tori $T_1,\cdots T_k$. We then form the double $2C$ of $C$ and consider the foliation $\overline{\mathcal{F}}$ on $2C$ given by gluing the restriction of $\mathcal{F}$ to $C$ along the boundary torus leaves. This foliation contains $M_*$ as a minimal set and its toral leaves correspond precisely to $\partial C$. First note that none of these tori can bound Reeb components. For in this case $C$ would be a solid torus and $\mathcal{F}$ would be a Reeb component, whose only (non-trivial) minimal set is the boundary leaf, contradicting the fact that $C$ contains an exceptional minimal set. In particular, each $T_i$ is incompressible and is contained in a complementary regions of $M_*$. 

But as $M_*$ is then an essential lamination by planes each complementary region of $M_*$ is homeomorphic to $\mathbb{R}^2 \times (0,1)$ contradicting the fact that each $T_i$ is incompressible. It follows that the original foliation was without torus leaves, and hence Reebless, so we can apply Lemma \ref{Gabai_planes}.
\end{proof}

In order to make sure that our approximation process stops we will need to know that the number of exceptional minimal sets of a foliation is finite. We have the following, which is an immediate consequence of Sacksteder's Theorem in the $C^2$-case (cf.\ \cite{CC} Theorem 8.3.2). In the $C^0$-case the argument generalises quite easily using the Dippolito's notion of an Octopus Decomposition associated to a saturated open subset.
\begin{lem}\label{finite_exceptional}
The number of exceptional minimal sets of a $C^0$-foliation on a closed manifold is finite.
\end{lem}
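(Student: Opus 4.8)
The plan is to reduce the $C^0$ statement to the $C^2$ result via Sacksteder, using Dippolito's octopus decomposition to control the geometry of the complement of a given exceptional minimal set. First I would suppose, for contradiction, that $\mathcal{F}$ has infinitely many distinct exceptional minimal sets $M_1, M_2, \ldots$ and consider the open saturated set $U = M \setminus M_1$. Since $M_1$ is closed and saturated, $U$ is an open saturated subset, and by Dippolito's theorem it admits an octopus decomposition: a compact \emph{core} $K$ (a compact submanifold-with-corners whose boundary meets $\mathcal{F}$ nicely) together with finitely many \emph{arms} (interval bundles) attached along $\partial K$, such that $U$ is the union of $K$ with these arms and each arm is a foliated product $P \times I$ over a (possibly non-compact) surface $P$. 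The key point is that an arm, being a foliated product, contains no exceptional minimal set — exactly the argument already made in the excerpt for why an exceptional minimal set cannot sit inside an injectively immersed foliated product $L \times [0,\epsilon]$, since the bottom-most leaf of any closed saturated subset would then fail to be dense in it. Hence every $M_i$ with $i \geq 2$ (each of which is disjoint from $M_1$, since two distinct minimal sets are disjoint) must be contained in the compact core $K$.

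Next I would run the holonomy/Sacksteder argument on the core. Because $K$ is compact and the $M_i$'s are pairwise disjoint compact saturated subsets sitting inside $K$, if there were infinitely many of them they would accumulate somewhere in $K$; choosing a point of accumulation and passing to a local transversal, one obtains infinitely many distinct Cantor sets (the traces of the $M_i$ on the transversal) accumulating in a bounded interval. The contradiction should come from the structure of holonomy: on the compact core, the holonomy pseudogroup is finitely generated (this is one of the standard outputs of the octopus decomposition), and Sacksteder's theorem — or rather its mechanism — guarantees that each exceptional minimal set carries a leaf with a contracting holonomy germ, which forces a certain "thickness" or separation of these minimal sets that is incompatible with infinitely many of them accumulating. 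In the $C^0$ (i.e.\ $C^{0,\infty+}$) setting one does not have Sacksteder's theorem verbatim, so I would instead invoke the weaker but sufficient fact that a finitely generated pseudogroup of leafwise-smooth homeomorphisms of a one-manifold has only finitely many exceptional minimal sets, which follows from Dippolito's analysis together with a counting argument on the arms once one knows they are products.

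The cleanest packaging, and the one I would actually write, is inductive: having shown that all but $M_1$ of the exceptional minimal sets live in the compact core $K_1$ of the octopus decomposition of $M \setminus M_1$, repeat with $M_2 \subset K_1$, obtaining a smaller compact piece containing $M_3, M_4, \ldots$, and so on. Each step strictly decreases a complexity that is bounded below — e.g.\ the number of arms, or the "depth," or simply a volume — so the process must terminate, which means there were only finitely many $M_i$ to begin with. Alternatively one can argue in one shot: the $M_i$ are disjoint, each lies in the core of the octopus decomposition of the complement of any other, and the cores form a nested decreasing family of compact sets whose intersection still must contain infinitely many minimal sets, forcing an accumulation of disjoint laminations in a compact region, which is impossible for foliations whose leaves are smoothly immersed with continuously varying tangent planes.

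The main obstacle, and the place where genuine care is needed, is the passage from "$U$ is an open saturated set" to "$U$ has an octopus decomposition with product arms" in the $C^{0,\infty+}$ category: Dippolito's theorem is classically stated for $C^1$ or $C^2$ foliations, and one must check that the construction of the core and arms goes through using only leafwise smoothness plus continuity of $T\mathcal{F}$ — in particular that the arms are honest foliated products and not merely "almost products," since the whole argument rests on an exceptional minimal set being unable to live in a product. I expect this is fine — the octopus decomposition is really a statement about the topology of $U$ relative to $\mathcal{F}$ and the local product structure of a foliated chart, both of which are available here — but it is the step that deserves an explicit remark rather than a citation, exactly in the spirit of the earlier remark that Novikov's and Sacksteder's arguments "generalise in a more or less direct fashion" to $C^{0,\infty+}$.
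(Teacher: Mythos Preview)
There is a genuine gap. Your central claim --- that every $M_i$ with $i \geq 2$ must be \emph{contained in} the compact core $K$ of the octopus decomposition of $M \setminus M_1$ --- does not follow from the fact that an exceptional minimal set cannot sit entirely inside a single product arm. A leaf of $M_i$ is non-compact and will typically thread in and out of the arms; all you can conclude is $M_i \not\subset B_j$ for each arm $B_j$, not $M_i \subset K$. Worse, if the $M_i$ accumulate on $M_1$ (which is precisely the interesting case), they cannot all lie in any compact subset of $M \setminus M_1$, so no fixed core can contain them. This breaks all three of your endgames: the Sacksteder/pseudogroup fallback you invoke (``a finitely generated pseudogroup of leafwise-smooth homeomorphisms has only finitely many exceptional minimal sets'') is essentially the statement to be proved; the inductive ``complexity decreases'' argument never names a well-defined quantity that actually drops (the cores for different $M_i$ are not nested, and volume or arm-count has no reason to decrease); and the final clause ``accumulation of disjoint laminations in a compact region is impossible'' is an assertion, not an argument.

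The paper's proof uses the same two ingredients you identified --- the octopus decomposition and the fact that exceptional minimal sets cannot live in foliated products --- but organises them around a limit. One passes to a Hausdorff-convergent subsequence $X_n \to X_\infty$, extracts a minimal set $X_* \subset X_\infty$, and works in a complementary component $C_*$ of $X_*$ with its octopus decomposition. The decisive extra tool, which your proposal is missing, is Dippolito's \emph{Semi-Stability Theorem}: on a short transversal at a point of the border leaf $L_b$ of $C_*$, the nearest point $y_n$ of $X_n$ to $L_b$ is fixed by every holonomy of $L_b$ (since $X_n$ is saturated and $y_n$ is nearest, any holonomy image of $y_n$ would land in $X_n$ strictly closer to $L_b$), so $L_b$ is semi-stable on that side and hence bounds an honest foliated product $L_b \times [0,\epsilon]$. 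All but finitely many $X_n$ then sit inside this product --- exactly the contradiction you were aiming for, but reached only after taking the limit and invoking semi-stability rather than by trying to trap the $M_i$'s in a fixed core.
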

\begin{proof}
Assume not and let $\{X_n\}$ be an infinite sequence of distinct exceptional minimal sets. Then as the set of (non-empty) compact subsets of a compact space is itself compact, some subsequence converges to a closed saturated subset $X_{\infty}$. We let $X_{*}$ be a minimal set in $X_{\infty}$. The complement of $X_{*}$ in $M$ consists of product regions and finitely many non-product components $C_1,\ldots, C_K$ whose metric completions admit Octopus Decompositions (cf.\ \cite{CC} p.\ 130 ff.). Note that no minimal set can lie in a product region, since the bottom-most leaf of such a closed saturated set cannot accumulate on any other leaf contained in the product region. Without loss of generality all $X_n$ lie in one non-product component $C_*$ whose metric completion has an Octopus Decomposition
$$\widehat{C}_* = K \cup B_1 \cup \cdots \cup B_p$$
where $K$ is compact and has boundary that decomposes into a tangential and transverse part and $B_i$ is a product region which intersects $K$ in an annulus and has tangential boundary consisting of subsets of leaves of $X_{*}$. Let $x_{*} \in X_{*}$ lie on a (semi-proper) border leaf $L_{b}$ of  $C_*$ (cf.\ \cite{CC}, p.\ 133 ff.). Then there is a sequence of points $x_n \in X_n$ converging to $ x_{*}$ and without loss of generality we can assume that $X_n \neq X_{*}$. We then choose a small compact transversal $T \cong [-\epsilon,\epsilon]$ at $x_{*}$. The nearest points of $T \cap X_{n}$ to $x_{*}$ give a sequence of points $y_n$ that are fixed by all holonomy maps of $L_{b}$, when these holonomies are defined so that the leaf $L_b$ is \emph{semi-stable}. By passing to a subsequence and possibly flipping the interval, we can assume that $y_n \in (0,\epsilon]$. Dippolito's Semi-Stability Theorem (\cite{CC} Theorem 5.3.4) then implies that all but finitely many $X_n$ are contained in a product region $L_{b} \times [0,\epsilon]$ one of whose boundary components is $L_b$ itself. But no exceptional minimal set can be contained in a product region. This contradiction shows that the number of exceptional minimal sets must be finite.
\end{proof}
\subsection*{Blowing up leaves} 
We next recall how to blow up leaves and insert holonomy, which will allow us to sidestep the use of Sacksteder's Theorem to approximate general $C^0$-foliations by contact structures. This construction goes back to Denjoy for flows and was extended to foliations of codimension-$1$ by Dippolito \cite{Dip}. The basic idea is to replace an arbitrary (non-compact) leaf $L$ by a saturated product. This can be done in a local foliation chart $D^2 \times [0,1]$ by blowing up each disc $D^2 \times \{p\} \subseteq L$ to a sufficiently small product. One then patches together these local blow-ups using the foliated atlas associated to the foliation. By making these local patching alterations via local leafwise isotopies one can also assume that the resulting blow-up is again of class $C^{0,\infty+}$ and that it is also $C^0$-close to the original foliation. This construction is carried out in detail in (\cite{Dip}, pp.\ 435--6) where it is referred to as ``implantation''.
\begin{lem}[Blowing up leaves \cite{Dip}]
Let $\mathcal{F}$ be a $C^0$-foliation and let $L$ be a leaf. Then we can blow up $\mathcal{F}$ along $L$ to replace it by a product of leaves $L \times [0,\epsilon]$. The resulting foliation can be assumed to be $C^0$-close to $\mathcal{F}$.
\end{lem}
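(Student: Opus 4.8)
The plan is to carry out Dippolito's ``implantation'' construction: perform the blow-up locally inside foliation charts and then patch the local pictures together using the foliated atlas. First I would fix a smooth flow $\phi_t$ transverse to $\mathcal{F}$ defined on a neighbourhood of $L$; flowing along $\phi$ provides a normal coordinate in which to ``spread out'' the leaf. In the case that $L$ is compact the construction is essentially local: choose a product chart $L \times (-1,1)$ compatible with $\mathcal{F}$ in which $L = L\times\{0\}$, remove the slice $L\times\{0\}$, and glue in a product block $L \times [0,\epsilon]$, reparametrising the transverse coordinate on the two sides by homeomorphisms of $(-1,0)$ and $(0,1)$ onto subintervals so that the new transverse coordinate function varies continuously, and leafwise smoothly, over the chart. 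Since in the chart every leaf is a graph over $L$, the $C^{0,\infty+}$ condition is preserved; and taking $\epsilon$ small keeps the result $C^0$-close to $\mathcal{F}$, the relevant estimates being exactly those underlying the Cutting-off Lemma \ref{loc_principle}.

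For a general, possibly non-compact, leaf $L$ I would fix an exhaustion $K_1 \subseteq K_2 \subseteq \cdots$ of $L$ by compact sets with $\bigcup_n K_n = L$ and perform the blow-up one compact piece at a time. On a neighbourhood of $K_n$ one pushes the nearby leaves off in the positive normal direction by a small amount $\delta_n$, fills in the resulting gap, and patches to the previous stage along $\partial K_n$, where one obtains a pinching (singular) but leafwise-smooth plane field. Using local leafwise isotopies and bump functions as in Lemma \ref{loc_principle}, the modification over $K_n \setminus K_{n-1}$ can be made so $C^0$-small that the sequence of these singular, non-uniquely integrable plane fields is Cauchy in the $C^0$-topology. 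Its limit is then a genuine continuous plane field tangent to a foliation which, by construction, is of class $C^{0,\infty+}$, contains the saturated product region $L \times [0,\epsilon]$, and is $C^0$-close to $\mathcal{F}$. Compatibility of the local blow-ups is handled by the foliated atlas: the transition functions preserve the $\mathbb{R}^2$-slices and are leafwise smooth, so after absorbing the transverse reparametrisations into leafwise isotopies the blown-up charts glue to a global $C^{0,\infty+}$-foliation.

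The main obstacle is the non-compact case: one must verify that filling in the successive ``air pockets'' over the $K_n$, together with the pinching regions over the $\partial K_n$, limits to an honest $C^0$-foliation rather than merely to a singular lamination --- that is, that the limiting tangent distribution is continuous and genuinely integrable, with $L \times [0,\epsilon]$ a leafwise-product region. This amounts to a careful bookkeeping of how small each successive push $\delta_n$ must be chosen relative to the previous stages, which is precisely the content of the detailed construction in \cite{Dip}, pp.\ 435--436; granted that control, the remainder is routine, the Cutting-off Lemma \ref{loc_principle} allowing each stage of the modification to be confined to a prescribed neighbourhood while keeping the foliated $C^0$-norm small.
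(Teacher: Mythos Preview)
Your proposal is correct and follows essentially the same approach as the paper. In fact, the paper does not give a proof of this lemma at all: it simply describes Dippolito's ``implantation'' construction in the paragraph preceding the statement (local blow-up in foliation charts, patching via the foliated atlas, leafwise isotopies to retain class $C^{0,\infty+}$ and $C^0$-closeness) and then refers to \cite{Dip}, pp.\ 435--436 for the details --- exactly the reference you invoke, and your sketch (compact case first, then exhaustion by compact sets with shrinking pushes in the non-compact case) is a faithful expansion of that outline.
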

\noindent Once we have a product region, then we can glue in any suspension foliation we wish.
\begin{lem}[Inserting holonomy]
Let $\mathcal{F}$ be a $C^0$-foliation containing a leaf $L$. Let $\mathcal{F}_{susp}$ be any suspension foliation on $ L \times [0,\epsilon]$ so that the boundary components are leaves. Then we can blow up along $L$ and replace the product foliation by a foliation conjugate to $\mathcal{F}_{susp}$. We can further assume that resulting foliation is $C^0$-close to $\mathcal{F}$ by squashing the product region to become arbitrarily thin.
\end{lem}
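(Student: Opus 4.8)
\emph{Plan.} The idea is to combine the preceding blow-up lemma with a cut-and-paste. After rescaling the transverse interval we may assume $\mathcal{F}_{susp}$ is a suspension foliation on $L\times[0,\delta]$ with $\delta>0$ as small as we wish. First I would apply the blow-up lemma to replace $L$ by a product region $P=L\times[0,\delta]\subset M$, inside which the foliation is the product foliation $\{L\times\{t\}\}$ and $L\times\{0\},L\times\{\delta\}$ are leaves, with the resulting foliation $C^0$-close to $\mathcal{F}$. Then I would discard the product foliation on $\mathrm{int}(P)$ and insert in its place a diffeomorphic copy of $\mathcal{F}_{susp}$, glued to the outside along the two boundary leaves via the identity on $L$. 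Since $\partial P=L\times\{0\}\sqcup L\times\{\delta\}$ consists of leaves of the foliations on both sides, every leaf of the resulting object lies entirely in $\overline{P}$ or entirely in $M\setminus\mathrm{int}(P)$, so it is again a foliation with $C^{\infty}$-immersed leaves; what remains is to check that it is tangent to a continuous plane field and is $C^0$-close to $\mathcal{F}$, and both of these rest on realizing $\mathcal{F}_{susp}$ inside the thin region $P$ with its leaves staying nearly tangent to the product.

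\emph{Realizing the suspension almost horizontally.} Here I would use the elementary observation that every orientation-preserving diffeomorphism $h$ of $[0,\delta]$ satisfies $|h(t)-t|\le\delta$, and that the straight-line isotopy $g_s:=(1-s)\,\mathrm{id}+s\,h$ is a path of such diffeomorphisms with $\partial_s g_s(t)=h(t)-t$, hence of size $\le\delta$. Writing $\mathcal{F}_{susp}$ as the suspension of a representation $\rho\colon\pi_1(L)\to\mathrm{Diff}^{\infty}_{+}([0,\delta])$, I would produce a trivialization $\Phi\colon(\widetilde{L}\times[0,\delta])/\pi_1(L)\xrightarrow{\cong}L\times[0,\delta]$ which is the identity on $L\times\{0,\delta\}$ by interpolating the gluing maps $\rho(\gamma_i)^{\pm1}$ across a fixed collar system of the sides of a fundamental domain of $\pi_1(L)$, using a fixed partition of unity together with the straight-line isotopies above. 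Because every $\rho(\gamma_i)^{\pm1}$ moves points of $[0,\delta]$ a distance at most $\delta$, the foliation $\Phi_{*}\mathcal{F}_{susp}$ on $L\times[0,\delta]$ has leaves whose tangent planes differ from the horizontal $TL$ by at most $C_L\,\delta$, where $C_L$ depends only on the chosen fundamental domain and partition of unity, not on $\delta$ or on $\rho$; for noncompact $L$ I would carry this out over an exhaustion, with variable thickness, exactly as in the proof of the blow-up lemma. By construction $\Phi_{*}\mathcal{F}_{susp}$ is conjugate to $\mathcal{F}_{susp}$, it restricts to the product foliation on $L\times\{0,\delta\}$ (as $\Phi$ is the identity there), and it is smooth up to the boundary.

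\emph{Conclusion.} For continuity of the plane field: off $\partial P$ the new plane field is that of a $C^0$-foliation on each side; at a point $p\in L\times\{0\}$ the plane field of the smooth foliation $\Phi_{*}\mathcal{F}_{susp}$ extends continuously to the value $T_pL$ (since $L\times\{0\}$ is one of its leaves), and the plane field of the blown-up foliation also tends to $T_pL$ there, so every sequence approaching $p$ has its planes converging to $T_pL$; hence the new foliation is of class $C^{0,\infty+}$. For $C^0$-closeness I would apply the triangle inequality for the foliated $C^0$-norm: the new foliation equals the blown-up one outside $P$, which is $C^0$-close to $\mathcal{F}$, and inside $P$ it is $\Phi_{*}\mathcal{F}_{susp}$, which by the tilt estimate $C_L\delta$ lies within $O(C_L\delta)$ of the product foliation on $P$, both in the positions of leaves and in their tangent planes. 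Choosing $\delta$ small makes this as small as desired — this is the promised squashing.

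\emph{Main obstacle.} The step I expect to require the most care is the construction of $\Phi$: for an essentially arbitrary (and possibly quite wild) holonomy representation $\rho$, and a possibly noncompact leaf $L$, one must obtain a trivialization whose leafwise tilt is controlled by $\delta$ uniformly in $\rho$. What makes this possible is precisely that all of the distortion is carried by diffeomorphisms of the short interval $[0,\delta]$, which can move points only a distance $\le\delta$; the bookkeeping near the corners of a fundamental domain — and, for noncompact $L$, along the exhaustion — is routine but is where the attention is needed.
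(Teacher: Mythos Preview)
The paper does not actually supply a proof of this lemma: it is stated immediately after the blow-up lemma and treated as self-evident, with the only justification being the phrase in the statement itself (``by squashing the product region to become arbitrarily thin''). Your proposal is a correct and careful unpacking of exactly that phrase. The key point you isolate --- that any orientation-preserving diffeomorphism of $[0,\delta]$ displaces points by at most $\delta$, so that interpolating the holonomy across fixed collars of a fundamental domain produces leaves whose tilt is $O(C_L\delta)$ independently of how wild the representation $\rho$ is --- is precisely the content hidden behind the word ``squashing'', and your verification of continuity of the plane field across $\partial P$ and the triangle-inequality estimate for $C^0$-closeness are the right checks. So your argument is correct and is in the same spirit as what the paper intends; you have simply written out what the paper leaves implicit.
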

Both of these lemmas have converses in the sense that if one can find a (leafwise immersed) saturated product bundle $L \times [0,\epsilon]$ in a foliated manifold so that $L \times \{0\}$ and $L \times \{\epsilon \}$ are leaves, then this product region can be collapsed to obtain a foliation with less product regions. This process is called ``explantation'' in \cite{Dip}. Moreover, this collapsing is achieved by a homotopy of maps $h_t \co M \longrightarrow M$ which are diffeomorphisms for $t \in (0,1]$, $h_1 = \textrm{id}$ and $h_0$ is a map that collapses the product region $L \times [0,\epsilon]$ to $L$. Dippolito's description of explantation (\cite{Dip}, Theorem 7) is quite involved as he wishes to make the collapsing smooth for $t >0$, in order that certain invariants of the foliation remain constant. It is however much easier to construct an explantation through maps that are only \emph{leafwise} smooth: simply make each collapse in a chart and then patch them together on overlaps using a partition of unity (which clearly preserves leafwise smoothness).

As soon as a leaf $L$ is not simply connected one can blow up and insert holonomy along any (embedded) homotopically non-trivial curve.
\begin{lem}\label{hol_insertion}
Let $L$ be a leaf of a foliation and let $\gamma \subseteq L$ be an embedded closed curve that does not bound a disc. Then we may blow up $L$ to $L \times [0,\epsilon]$ and insert a suspension foliation so that the holonomy around $\gamma \subseteq L \times\{0\}$ has no fixed points in $(0,\epsilon)$.
\end{lem}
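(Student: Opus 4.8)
The plan is to reduce the statement to the construction of a single holonomy representation and then invoke the Blowing-up and Inserting-holonomy Lemmas recorded above. Since $\gamma$ does not bound a disc it is essential, so $[\gamma]\neq 1$ in $\pi_1(L)$, and it suffices to produce a homomorphism $\rho\co\pi_1(L)\to\mathrm{Diff}^{\infty}_{+}([0,\epsilon])$ into the orientation-preserving diffeomorphisms of $[0,\epsilon]$ fixing both endpoints, such that $\rho([\gamma])$ has no fixed point in the open interval. Indeed, given such a $\rho$, the associated suspension foliation $\mathcal{F}_\rho$ on $L\times[0,\epsilon]$ has its two boundary components as leaves since $\rho$ fixes $0$ and $\epsilon$; so by the Blowing-up Lemma we may blow $L$ up to a product, and by the Inserting-holonomy Lemma replace the product foliation by $\mathcal{F}_\rho$, squashing the product region to keep the result $C^0$-close to $\mathcal{F}$. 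The holonomy of $\mathcal{F}_\rho$ around the loop $\gamma$ on the leaf $L\times\{0\}$ is then $\rho([\gamma])$, which is fixed-point free on $(0,\epsilon)$ by construction.

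\textbf{Building $\rho$.} The behaviour splits according to whether $\gamma$ separates $L$. Fix a diffeomorphism $f_0$ of $[0,\epsilon]$ fixing the endpoints with $f_0(t)>t$ on $(0,\epsilon)$, so that $f_0^{k}$ is fixed-point free on the interior for every $k\neq 0$. If $\gamma$ is non-separating, then $[\gamma]$ is a primitive class in $H_1(L;\Z)$, so there is a homomorphism $h\co\pi_1(L)\to\Z$ with $h([\gamma])=1$; take $\rho$ to be $h$ followed by the embedding $\Z\hookrightarrow\mathrm{Diff}^{\infty}_{+}([0,\epsilon])$ sending $1$ to $f_0$, so that $\rho([\gamma])=f_0$. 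If $\gamma$ separates, write $L=L_1\cup_\gamma L_2$ with $\gamma=\partial L_i$; then $\pi_1(L)=\pi_1(L_1)\ast_{\langle\gamma\rangle}\pi_1(L_2)$ by van Kampen, and it is enough to construct homomorphisms $\rho_i\co\pi_1(L_i)\to\mathrm{Diff}^{\infty}_{+}([0,\epsilon])$ with $\rho_i([\gamma])=f_0$, since two such representations agree on the amalgamated subgroup $\langle\gamma\rangle$ and glue to the desired $\rho$. Each $\pi_1(L_i)$ is free with $[\gamma]$ the corresponding boundary word. If $L_i$ has a further boundary circle or end, then $[\gamma]$ is part of a free basis and we send it to $f_0$ and the remaining basis elements to the identity. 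Otherwise $L_i$ is a one-holed surface of genus $h\geq 1$ (it is not a disc, as $\gamma$ is essential), so $[\gamma]=[x_1,y_1]\cdots[x_h,y_h]$ in $\pi_1(L_i)=\langle x_1,y_1,\dots,x_h,y_h\rangle$; here we fix an action of the Baumslag--Solitar group $BS(1,2)=\langle a,t\mid tat^{-1}=a^2\rangle$ on $[0,\epsilon]$ by endpoint-fixing diffeomorphisms in which $a$ acts as $f_0$ --- such an action exists, with $a$ parabolic at the endpoints --- use the identity $[t,a]=a$ valid in $BS(1,2)$, and set $\rho_i(x_1)=t$, $\rho_i(y_1)=a$, and $\rho_i(x_j)=\rho_i(y_j)=\mathrm{id}$ for $j\geq 2$, so that $\rho_i([\gamma])=[t,a]=a=f_0$.

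\textbf{Main obstacle.} I expect the real content to be precisely the separating case. A suspension over $L$ is governed by a single representation of $\pi_1(L)$, and for a separating $\gamma$ the class $[\gamma]$ lies in the commutator subgroup and is therefore killed by every homomorphism to $\Z$; one must instead find a quotient of $\pi_1(L)$ in which $[\gamma]$ survives and becomes \emph{cofinal}, so that it can be realised by a fixed-point-free interval diffeomorphism --- and this is exactly what the amalgamation together with the Baumslag--Solitar construction provides. The remaining points --- that the two representations over $L_1$ and $L_2$ match along $\langle\gamma\rangle$, that the inserted suspension is genuinely of class $C^{0,\infty+}$, and that squashing keeps it $C^0$-close to $\mathcal{F}$ --- are routine given the two lemmas quoted above.
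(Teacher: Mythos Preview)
Your proof is correct and follows the same strategy as the paper: a homomorphism to $\Z$ when $\gamma$ is non-separating, and the commutator identity $[t,a]=a$ in the affine group of the line (equivalently $BS(1,2)$) to realise $\gamma$ as a fixed-point-free interval diffeomorphism when $\gamma$ is separating. The only difference is organisational---the paper reduces the separating case to a genus-two surface with $\gamma$ the central curve and then writes down a single commutator, whereas you glue representations on the two complementary pieces via van Kampen, handling compact one-holed pieces with the same $BS(1,2)$ trick and non-compact pieces by observing that $[\gamma]$ is then part of a free basis; the algebraic core is identical.
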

\begin{proof}
If $\gamma$ is homologically non-trivial then there is a homomorphism $\rho_{\gamma} \co \pi_1(L) \longrightarrow \mathbb{Z}$ so that $\gamma$ is mapped to a generator. We then choose any diffeomorphism $f$ of $[0,\epsilon]$ without fixed points in the interior which gives the desired suspension foliation via $\rho_{\gamma}$.

Assume now that $\gamma$ is homologically trivial and consider the two components $C_0,C_1$ of $L \setminus \gamma$, one of which must be compact. If the other component is non-compact then we can apply the argument above to obtain a representation so that the image of $\gamma$ is arbitrary on the non-compact component. Thus it suffices to consider the case that both components are compact so that the leaf $L$ is itself compact. By collapsing handles we can assume that the leaf has genus two and that $\gamma$ is the central curve. We now consider a diffeomorphism $f \in \textrm{Diff}_+([0,\epsilon])$ without interior fixed points which can be written as a commutator
$$f = ghg^{-1}h^{-1}.$$
Such a diffeomorphism can be constructed as follows: identify $(0,\epsilon)$ with $\mathbb{R} \subseteq S^1 = \mathbb{R} \cup\{\infty\}$ and consider the affine action
$$g(x) = 2x \ , \ h(x) = x+1.$$
In fact, in this case we have $ ghg^{-1}h^{-1} = h = f.$ By taking such representations on the components of $L \setminus \gamma$, we then obtain a representation $\pi_1(L ) \longrightarrow \textrm{Diff}_+([0,\epsilon])$ with $\rho_{f}(\gamma) = f$ giving the desired suspension foliation.
\end{proof}

\subsection*{$C^0$-Confoliations and holonomy}\label{holonomy}
It is a fundamental observation going back to Eliashberg and Thurston \cite{ETh} that the holonomies induced by a smooth (positive) confoliation must be non-positive, at least on a small scale. For the same reason the holonomies of a $C^0$-confoliation $\xi$, which we will now define precisely, are also non-positive. 
\begin{defn}\label{def_confol}
A plane field $\xi$ is a {\bf $C^0$-confoliation} if for each point $p \in M$ there is an open neighbourhood $U$ of $p$ so that $\xi$ restricted to $U$ is either tangent to the restriction of a given $C^0$-foliation $\mathcal{F}$ defined on all of $M$ or is a smooth confoliation. 
\end{defn}
\begin{rem}\label{rem_hol}
Note that the assumption that $\xi$ is tangent to a \emph{globally} defined foliation where it is not smooth is essential for our analysis below, as it allows one to define holonomies on subsets of $M$ in a coherent manner.
\end{rem}
\noindent In order to analyse the local holonomies of a $C^0$-confoliation we suppose that $U \cong D^2 \times [0,1]$ is a subset of $M$ so that the intervals $\{pt\} \times [0,1]$ are transverse to a plane field $\xi$. We then consider the plane field $\xi$ as a connection, so that we can define the parallel transport of a curve in $D^2$. In general, the tangent plane field may \emph{not be uniquely integrable} on subsets where it is tangent to a foliation $\mathcal{F}$ and is not smooth so that such parallel transport cannot be defined unambiguously by merely lifting curves. In order to circumvent this we take (the unique) lifts that are tangent to leaves of $\mathcal{F}$ in regions where $\xi$ is not smooth. Thus the key point to defining holonomies is that the characteristic line field imprinted on $\partial D^2 \times [0,1]$ can be \emph{coherently integrated} to a \textbf{characteristic foliation} in the sense that is the tangent plane field of a $C^0$-foliation of dimension $1$. Note that such a foliation need not be unique in general (cf.\ \cite{BonFr}).

The holonomy $h_{\xi}$ along any smooth embedded curve $\gamma \co [0,1] \longrightarrow D^2$ (when defined) is determined by considering the $\xi$-lift $\tilde{\gamma}_x$ of $\gamma$ in the sense described above starting at some point $x \in [0,1]$ and setting
$$h_{\xi}(x) = \tilde{\gamma}_x(1).$$
We claim that $h_{\xi}(x) \le x$, when $\xi$ is a $C^0$-confoliation. Moreover, if this lift passes through a region where $\xi$ is contact then $\tilde{\gamma}(1) < \tilde{\gamma}(0)$ and hence $h_{\xi}(x) < x$. 

To see this one chops the region bounded by $\gamma$ in the base into small piecewise smooth regions $R_i$ so that the corresponding lifts are entirely contained in a region where the plane field is either tangent to $\mathcal{F}$ or is smooth. One then factorises $\gamma$ as a product of loops of the form 
$$\gamma = \prod_{i=1}^N \tau^{-1}_i\gamma_i\tau_i = \prod_{i=1}^N\beta_i,$$ where $\tau_i$ is an arc and $\gamma_i$ parametrises $\partial R_i$ in the positive sense. The lifts of each $\gamma_i$ that lies in the integrable region have trivial holonomy and lifts to a region where $\xi$ is a smooth confoliation have non-increasing (i.e.\ non-positive) holonomy (cf.\ \cite{ETh}, pp.\ 13--14). In particular, the holonomy around $\gamma_i$ is non-positive in total and the same is then true of $\beta_i$ since conjugation by a path only changes the holonomy by a conjugation. Since the holonomy around $\gamma$ is just the product of the holonomies around the $\beta_i$, this holonomy is also non-positive. Moreover, if the lift $\tilde{\gamma}_x$ of $\gamma$ through $x$ passes through the contact region, then the holonomy $h$ satisfies $h(x)<x$.
\begin{rem}\label{hol_neg}
In order to assume that holonomies are globally well defined on $D^2 \times [0,1]$ one can assume that the confoliation is integrable near $D^2 \times\{0,1\}$.  We also remark that the holonomies of the characteristic foliation on the boundary of any piecewise smooth polyhedron whose boundary is transverse to a \emph{contact structure} will also necessarily be negative near `supporting vertices' (cf.\ Section \ref{jiggling_sec}).
\end{rem}
There is another generalisation of a smooth confoliation that will be useful below. 

\begin{defn}\label{tangential_conf}
Let $\xi$ be tangent to a \textbf{smooth} vector field $Y$ on some subset $U = J \times [0,1] \times [0,1] \subset M$, where $J = [0,1]$ or $S^1$ so that $Y$ is identified with the coordinate vector field $\frac{\partial}{\partial y}$ given by the second coordinate. Then we will call $\xi$ a \textbf{tangential $C^0$-confoliation} with respect to $Y$, if in terms of the coordinates $(x,y,z)$ the plane field $\xi$ is the kernel of
$$dz-f(x,y,z)dx$$
such that $f$ is (weakly) monotone in $y$ and the kernel of the $1$-form $dz-f(x,0,z)dx$ is tangent to a foliation. 
\end{defn}
Note that if $f(x,y,z)$ is strictly monotone on $\mathcal{O}p(K)$ for some compact subset $K$, then we can smoothen $\xi$ on $\mathcal{O}p(K)$ in such a way that the function $f(x,y,z)$ remains strictly monotone in $y$ on $K$. Observe further that in the case $J = [0,1]$ the holonomy around $\partial ([0,1]\times [0,1])$ is non-negative and is strictly negative if $f(x,y,z)$ is strictly monotone on some part of each interval $\{x \}\times [0,1] \times \{z\}$. 

\section{Smoothing $C^0$-Foliations near leafwise arcs and near the $1$-skeleton}\label{sec_smooth}
At various points during the process of approximating a general $C^0$-foliation we will need to ensure that the foliations we are considering are smooth on certain regions of the given manifold. In general, one cannot approximate a $C^0$-foliation by a smooth one globally. However applying standard smoothing theory we can obtain smoothings of $C^0$-foliations on neighbourhoods of leafwise arcs and near the $1$-skeleton of an appropriately transverse triangulation.
\begin{lem}[Smoothing near leafwise arcs]\label{smooth}
Let $\mathcal{F}$ be a $C^0$-foliation on a $3$-manifold $M$ and let $\gamma$ be a smoothly embedded compact arc in a leaf. Then there is a $C^0$-foliation $\mathcal{F}'$ such that:
\begin{enumerate}
\item $\mathcal{F}'$ agrees with the original foliation outside a neighbourhood of $\gamma$;
\item $\mathcal{F}'$ is smooth on a neighbourhood of $\gamma$;
\item $\mathcal{F}'$ is $C^0$-close to $\mathcal{F}$ (i.e.\ their tangent distributions are close).
\end{enumerate}
\end{lem}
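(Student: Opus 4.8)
The plan is to reduce, via the Cutting-off Lemma (Lemma~\ref{loc_principle}), to a purely local smoothing problem inside a single foliated chart, and there to smooth $\mathcal{F}$ simply by mollifying its defining family of graphs in \emph{all} of the ambient coordinates. Since $\gamma$ is compact and embedded in a leaf, I would first cover it by finitely many foliated charts $U_1,\dots,U_N\cong D^2\times I$ and write $\gamma$ as a union of compact subarcs $\gamma_i\subset U_i$; it then suffices to carry out the smoothing one chart at a time and iterate.

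So fix a chart $U\cong D^2\times I$ containing such a subarc, in which $\mathcal{F}$ is given by the graphs of $f_z\co D^2\to\R$, and -- shrinking $U$ inside a genuine chart of the atlas -- assume $F(x,y,z):=f_z(x,y)$ is defined on a neighbourhood of $\overline U$, with $\partial_xF,\partial_yF$ continuous and $z\mapsto F(x,y,z)$ strictly increasing. For a nonnegative mollifier $\Phi_\epsilon$ I would set $f^\epsilon_z(x,y):=(F*\Phi_\epsilon)(x,y,z)$. This is smooth; it still defines a foliation, since convolution of a strictly $z$-monotone function with a nonnegative kernel is again strictly $z$-monotone (for $h>0$ the increment $G^\epsilon(\cdot,z+h)-G^\epsilon(\cdot,z)$ is an integral of strictly positive increments of $F$ against $\Phi_\epsilon\ge 0$); and $f^\epsilon_z\to f_z$ in the partial $C^1$-norm, because $\partial_x(F*\Phi_\epsilon)=(\partial_xF)*\Phi_\epsilon\to\partial_xF$, similarly in $y$, and $F*\Phi_\epsilon\to F$, uniformly on compacts -- here one uses in an essential way that $\partial_xf_z,\partial_yf_z$ are continuous, i.e.\ that $\mathcal{F}$ is of class $C^{0,\infty+}$. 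The Cutting-off Lemma now applies verbatim: it produces foliations converging to $\mathcal{F}$ in the foliated $C^0$-sense, equal to $\mathcal{F}$ outside $U$ and equal to the smooth foliation $\{f^\epsilon_z\}$ on any prescribed $V\Subset\mathrm{int}(U)$, which we choose so that $\gamma_i\subset V$. For $\epsilon$ small this yields the three required properties on this chart, with the neighbourhood in (1) taken to be $U$.

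To finish I would run this construction successively on $U_1,\dots,U_N$. At the $i$-th step the foliation built so far is already $C^\infty$ on a neighbourhood of $\gamma_1\cup\dots\cup\gamma_{i-1}$, and mollifying it on $U_i$ and cutting off near $\gamma_i$ only ever replaces the plane field, on the overlap with that region, by a convex combination of two \emph{smooth} plane fields, so the smoothing already achieved is not destroyed; exactly as in the proof of Lemma~\ref{loc_principle} the outcome is again of class $C^{0,\infty+}$ and $C^0$-close to $\mathcal{F}$, so after $N$ steps we obtain the desired $\mathcal{F}'$.

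The step I expect to be the crux is keeping the two competing requirements compatible: one must mollify in all of $x,y,z$ -- mollifying only in the transverse variable $z$ leaves the leafwise derivatives merely continuous, not smooth, in $x,y$ -- while simultaneously retaining strict monotonicity of $z\mapsto f^\epsilon_z$, so that the mollified object is still a foliation \emph{and} its tangent planes stay $C^0$-close to $T\mathcal{F}$. Both hold, but the control on the leafwise derivatives is precisely where the $C^{0,\infty+}$ hypothesis enters; for a merely $C^{0,\infty}$ foliation the derivatives $\partial_xf^\epsilon_z$ need not converge and the statement can fail.
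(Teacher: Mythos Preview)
Your approach is essentially the paper's --- mollify the parametrizing function $F(x,y,z)=f_z(x,y)$ in all three variables and cut off --- and it is correct. Two small differences are worth flagging. First, the paper covers the whole arc by a \emph{single} chart $\tilde\gamma\times[-1,1]\times[-\epsilon,\epsilon]$ (possible since $\gamma$ is an embedded compact arc lying in one leaf), which eliminates your chart-by-chart iteration; the paper's Remark~\ref{smoothing_attempt} explains precisely why such iterations are delicate, and while your version does work here, your justification (``convex combination of two smooth plane fields'') is not quite the right description --- it is a convex combination of parametrizing functions, and what makes it succeed is that on the already-smoothed region the \emph{new} chart's parametrization $f_{z}$ is itself smooth in $z$. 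Second, you verify that $F*\Phi_\epsilon$ is smooth and strictly $z$-monotone, hence gives a smooth bijection $(x,y,z)\mapsto(x,y,(F*\Phi_\epsilon)(x,y,z))$, but for the resulting foliation to be smooth in the \emph{ambient} coordinates you need this map to be a diffeomorphism, i.e.\ $\partial_z(F*\Phi_\epsilon)>0$; the paper forces this by adding $\delta\cdot z$, which is the simplest fix.
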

\begin{proof}
Let $N(\tilde{\gamma} ) = \tilde{\gamma} \times [-1,1] \times [\epsilon,\epsilon]$ be a smooth neighbourhood of a slight extension $\tilde{\gamma}$ of the arc $\gamma$ so that $\tilde{\gamma} \times [-1,1] \times \{0\}$ lies in a leaf $L_0$ and the interval fibres corresponding to the third factor are transverse to $\mathcal{F}$. Then the foliation is given as a family of graphs of functions
$$(x,y,z) \longmapsto (x,y,f_z(x,y))$$
so that the partial derivatives of $f_z(x,y)$ in the $x$ and $y$ directions are continuous. Write $F(\overline{x}) = F(x,y,z)= f_z(x,y)$. We then smoothen $f_z(x,y)$ using convolution with a bump function $\rho$ such that $\int_{\R^3} \rho =1$: Set $\rho_{\delta}(\overline{x}) = \delta^{-3}\rho\left(\overline{x}/\delta\right)$ and define
$$G_{\delta}(\overline{x}) = \int_{\R^3}\rho_{\delta}\left(\overline{x} -\overline{y}\right)F(\overline{y})d\overline{y}= \int_{\R^3}\rho_{\delta}\left(\overline{z}\right)F(\overline{x}-\overline{z})d\overline{z}.$$
Then the tangent planes of the graphs of these approximations approach the tangent planes of $\mathcal{F}$. This is the case since the partial derivatives of $G_{\delta}$ can be computed by differentiating under the integral sign and since the partial derivatives of $F$ in the $x$ and $y$ directions are continuous. Note also that convolution preserves the inequality $F(x,y,z_1) < F(x,y,z_2)$ for $z_1<z_2$, so that the graphs of the $G_\delta$ do indeed define a foliation, in the sense that the map
$$(x,y,z) \longmapsto (x,y,G_\delta(x,y,z))$$
is a smooth bijection. Note however that $\frac{\partial G_\delta}{\partial z}$ is only non-negative and may not be strictly positive even though $G_\delta(x,y,z)$ is strictly monotone for fixed $(x,y)$. We remedy this by setting
$$\widehat{G}_{\delta} = G_\delta(x,y,z) + \delta \cdot z,$$
which is then gives a diffeomorphism as $\frac{\partial \widehat{G}_{\delta}}{\partial z} > 0$. Let $\sigma$ be a \emph{fixed} bump function with support in $N(\tilde{\gamma})$ that is identically one on a neighbourhood of $\gamma$ and set
$$h_{\delta} = (1-\sigma)\cdot F + \sigma\cdot \widehat{G}_{\delta} .$$
We then modify the foliation on $N(\tilde{\gamma})$ by taking the images of the $(x,y)$-planes under the map
$$(x,y,z) \longmapsto (x,y,h_{\delta}(x,y,z))$$
on $N(\tilde{\gamma})$ which glues together with the original foliation. This then gives the desired approximation for $\delta$ sufficiently small. 
\end{proof}

\begin{rem}
Note that if $N = I \times I \times I $ is any regular neighbourhood of $\gamma $ so that the intervals of the last factor are transverse to $\mathcal{F}$, then we may smoothen on a slightly smaller neighbourhood $N' \subseteq N$. In particular, we can smoothen near any given transversal arc or circle.
\end{rem}

\begin{rem}\label{smoothing_attempt}
The statement of Lemma \ref{smooth} seems to give a recipe for approximating any $C^0$-foliation by one that is smooth (which is in general impossible): one simply tries to smoothen with respect to a finite foliated atlas one chart at a time. However, the representation of the foliation as a family of graphs of functions, requires a choice of the parameter $z$ or equivalently the choice of a local transversal. When one changes coordinates, one must also change the parameter $z$ and for a $C^0$-foliation this transformation will in general only be continuous. Hence when one tries to continue the smoothing process one cannot do this in a way that is compatible with smoothings on other charts. What is achieved through such local patching of smoothings is that the leaves of resulting foliation are \emph{leafwise} $C^{\infty}$-immersed and vary continuously in the $C^{\infty}$-topology. Hence we can always assume that this is the case after a $C^0$-approximation.
\end{rem}

We now extend Lemma \ref{smooth} to smoothen integrable plane fields near the $1$-skeleton of a suitable triangulation.
\begin{lem}[Smoothing near the $1$-skeleton]\label{smooth_1_skel}
Let $ \mathcal{F}$ be a $C^0$-foliation and suppose that $\mathcal{F}$ is transverse to the $1$-skeleton of some triangulation. Then $\mathcal{F}$ can be $C^0$-approximated by a foliation $\mathcal{F}'$ that is smooth on some {\bf fixed} neighbourhood $U$ of the $1$-skeleton, where $U$ does not depend on how good the approximation is. 
\end{lem}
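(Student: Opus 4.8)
The plan is to deduce this from Lemma \ref{smooth} by a finite iteration: smooth $\mathcal F$ first near the vertices of the triangulation and then near the interiors of the edges, arranging the supports of the modifications so that no later one undoes an earlier one; the only genuinely new point compared with Lemma \ref{smooth} is that the smooth region must be fixed before one knows how good the approximation has to be. So first I would fix the data. Let $v_1,\dots,v_m$ and $e_1,\dots,e_n$ be the vertices and edges of the triangulation. Choose pairwise disjoint closed foliation boxes $B_i\cong D^2\times I$ around the $v_i$, small enough that $\mathcal F$ is given by graphs over $D^2$ on each and that $e_k$ meets $B_i$ only when $v_i$ is an endpoint of $e_k$, together with concentric sub-boxes $B_i'\subset\operatorname{int}(B_i)$. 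For each edge put $e_k':=e_k\setminus\bigcup_i\operatorname{int}(B_i')$; this is a compact arc transverse to $\mathcal F$, so by the first remark following Lemma \ref{smooth} it has a thin box neighbourhood $N_k\cong I\times I\times I$ whose last interval factor is transverse to $\mathcal F$ (take coordinates in which $\mathcal F$ is given by graphs and $e_k'$ is the image of a vertical segment). Since distinct edges of a triangulation meet only at vertices, the compact sets $e_k\setminus\bigcup_i\operatorname{int}(B_i)$ are pairwise disjoint, so after shrinking the tubes I may assume that $N_k\cap N_{k'}\subseteq\bigcup_i\operatorname{int}(B_i)$ for $k\neq k'$ and that whatever part of each $N_k$ meets some $B_i$ stays inside a fixed, slightly larger neighbourhood of $\bigcup_i B_i$. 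Finally fix bump functions $\sigma_k$ supported in $N_k$, identically $1$ on a sub-box $N_k^0$ that contains a neighbourhood of $e_k\setminus\bigcup_i\operatorname{int}(B_i)$, and set $U:=\bigcup_i\operatorname{int}(B_i)\cup\bigcup_k N_k^0$; this is a neighbourhood of the $1$-skeleton $\Gamma$, and it is now fixed.

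Next, run the smoothings in order. Applying the convolution construction of Lemma \ref{smooth} inside each $B_i$ — the $B_i$ being disjoint, these steps are independent — replaces $\mathcal F$ by a $C^0$-close foliation that agrees with $\mathcal F$ outside $\bigcup_i B_i$ and is a genuine smooth foliation on a neighbourhood of $\bigcup_i B_i$. Then, one edge at a time, apply the same construction inside $N_k$ to the current foliation, making it smooth on $N_k^0$ and leaving it unchanged outside $N_k$. The point is that this cannot spoil earlier smoothness: wherever $N_k$ overlaps a region treated earlier, that overlap lies in $\bigcup_i\operatorname{int}(B_i)$ by construction, and there the current foliation is already a smooth foliation; hence in the graph coordinates on $N_k$ its defining function is smooth in all three variables (smoothness of a foliation is coordinate-free, via the implicit function theorem), its convolution is again smooth, and the interpolation $h_\delta=(1-\sigma_k)F+\sigma_k\widehat{G}_\delta$ of two smooth functions is smooth. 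Transversality of $e_k$ to the fibres of $N_k$, and hence graph-ability over $N_k$, is an open condition, so it persists for the current foliation as long as each earlier approximation was taken close enough — which we may require, since there are only $m+n$ steps. After all steps we obtain $\mathcal F'$, smooth on the fixed set $U\supseteq\Gamma$.

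For the approximation estimate, each step is governed by its own convolution parameter $\delta$, so choosing these successively small enough makes the total change less than any prescribed $\varepsilon$; and since $U$ was fixed before any $\delta$ was selected, it genuinely does not depend on the quality of the approximation. The step I expect to be the main obstacle is the claim that edge-smoothing preserves the smoothness gained earlier: it rests both on taking the tubes $N_k$ thin enough that their mutual overlaps, and their excursions into the $B_i$, lie inside the already-smoothed vertex boxes, and on the coordinate-independence of being a smooth foliation, so that re-expressing an already-smooth region in the new graph coordinates on $N_k$ costs nothing.
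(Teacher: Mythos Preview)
Your argument is correct and follows essentially the same strategy as the paper's: smooth first at the vertices, then along the edges, using the convolution construction of Lemma~\ref{smooth}. The paper handles the overlap more directly by choosing edge-tube coordinates $D^2\times[0,1]$ so that near the ends the discs $D^2\times\{pt\}$ are tangent to the already-smoothed foliation, i.e.\ $f_z=z+C$ there; since convolution fixes affine functions and the additive term $\delta z$ is replaced by $\delta\rho(z)z$ with $\rho$ supported in the interior of $[0,1]$, the edge-smoothing then \emph{coincides} with the vertex-smoothed foliation on the overlap, which bypasses your inductive bookkeeping about preservation of smoothness under later steps.
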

\begin{proof}
Near vertices we can apply the convolution argument of Lemma \ref{smooth} \emph{verbatim}. We then choose tubular neighbourhoods $$\nu_{-\epsilon}(e) \cong D^2 \times [0,1]$$ of each edge $e$ with $\epsilon$ neighbourhoods of the ends points removed, so that $0 \times [0,1] \subseteq e$, $\nu_{-\epsilon}(e)$ intersects the neighbourhoods of the vertices where $\xi$ has been smoothened and the neighbourhoods are pairwise disjoint for distinct edges. We can assume that the intervals $pt \times[0,1]$ are transverse to $\mathcal{F}$ and hence to $\mathcal{F}'$ by assuming that $\mathcal{F}'$ is $C^0$-close to $\mathcal{F}$. We can also assume that the discs $D^2 \times pt$ are tangent to $\mathcal{F}'$ near the end points of $[0,1]$, as $\mathcal{F}'$ has been made smooth there already. We then apply the convolution argument again and note that convolution preserves the function $f_z = z+C$ so that the resulting foliation agrees with the given one near vertices. Moreover, instead of perturbing the function $G_{\delta}$ as in the proof of Lemma \ref{smooth} by adding $\delta z$ we can add $\delta\rho(z)z$ instead for a suitable cut-off function with support in the interior of $[0,1]$, so that the resulting foliation agrees with the given one near the endpoints of $e$. In this way we obtain the desired approximation of $\mathcal{F}$ which is smooth on the chosen neighbourhood of the $1$-skeleton.
\end{proof}
\section{Nice annular fences}\label{nice_section} 
It will be convenient to choose special ``nice'' coordinates near simple closed curves lying on a leaf of a foliation $\mathcal{F}$. The purpose of this section is to describe precisely what this niceness will be and to show that such coordinates always exist, at least for a $C^0$-dense set of foliations.

An embedded annulus $A = S^1 \times [-\epsilon,\epsilon]$ whose core $\gamma = S^1 \times \{0\}$ lies on a leaf of a foliation $\mathcal{F}$ such that the interval fibers are transverse to $\mathcal{F}$ will be called an {\bf annular fence}. We can approximate as in Lemma \ref{smooth} to assume that $\mathcal{F}$ is smooth near a fixed fiber of $A$ say $0 \times [-\epsilon,\epsilon]$. We then consider a product neighbourhood given by flowing along a smooth vector field $Y$ that is $C^0$-close to $\mathcal{F}$ and is tangent both to $\mathcal{F}$ near $0 \times [-\epsilon,\epsilon]$ and to the leaf containing $\gamma$. We denote the resulting coordinates $(x,y,z)$, where $\frac{\partial}{\partial y}$ is tangent to the flow of $Y$. 

Next let $h$ be the holonomy of the foliation $\mathcal{F}$ along $\gamma$ with respect to a closed transversal $0 \times [-\epsilon',\epsilon']$ where $\epsilon' < \epsilon$ is such that $h( [-\epsilon',\epsilon']) \subset [-\epsilon,\epsilon]$. Now consider a ``nice'' suspension foliation $\mathcal{F}_{nice}$ on $A$ given by $h$. By \textbf{nice} we mean that for a parametrisation $t \mapsto (t, \gamma_z(t))$ of the leaf $L_z$ through $ z \in [-\epsilon,\epsilon']$ we have (cf.\ Figure \ref{nice}):
\begin{itemize}
\item $\gamma'_z(t) =0 $ if $\gamma_z(t) \in \mathcal{O}p(0 \times [-\epsilon,\epsilon])$;
\item $\gamma'_z(t)\ge 0$ and $\gamma'_z(t)> 0$ away from $0 \times [-\epsilon,\epsilon]$, if $z < h(z)$;
\item $\gamma'_z(t) \le 0$  and $\gamma'_z(t) < 0$ away from $0 \times [-\epsilon,\epsilon]$, if $z > h(z)$;
\item $\gamma'_z(t) \equiv 0 $, if $z = h(z)$.
\end{itemize}
\begin{figure}[h]\begin{center}
\input{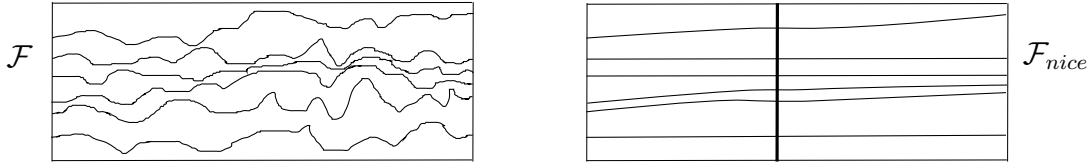} \caption{The foliation on the annulus (cut open along an interval) on the left is not nice at all, whereas that on the right is very nice. The thickened interval represents $0 \times [-\epsilon,\epsilon]$, near which $\mathcal{F}_{nice}$ is horizontal.}\label{nice}
\end{center}
\end{figure}
In particular, $\gamma_0$ is a parametrisation of the holonomy curve $\gamma$ which then corresponds to $S^1 \times \{0\}$. Note that both $\mathcal{F}$ and $\mathcal{F}_{nice}$ are by construction tangent to $z = 0$ near $0 \times [-\epsilon,\epsilon]$. We can then straighten out the foliation using straight line isotopies along the $z$-fibers relative to some neighbourhood $N$ of $0 \times [-\epsilon,\epsilon]$ so that the resulting foliation is a product of $\mathcal{F}_{nice}$ with the $y$-intervals and agrees with $\mathcal{F}$ away from the annulus $A$. More precisely, away from some small neighbourhood $N' \subseteq N= \mathcal{O}p(0 \times [-\epsilon,\epsilon]) \subseteq A$ the foliation $\mathcal{F}$ is given by functions
$$f_z(x,y) \co D^2 \longrightarrow \R$$
which equal $z$ on the overlap with $N$ and have $f_0 = 0$. On the same subset the product of $\mathcal{F}_{nice}$ with the $y$-intervals is given by functions 
$$f^{nice}_z(x,y) \co D^2 \longrightarrow \R$$
which also equal $z$ on the overlap with $N$ and again satisfy $f^{nice}_0 = 0$. One then interpolates between $f_z(x,y)$ and $f^{nice}_z(x,y)$ using a cut-off function as in the proof Lemma \ref{smooth}. For this it is convenient to take a cut-off function of the form
$$\sigma_\eta(x,y,z) = \rho(x,y)\cdot \gamma_{\eta}(z)$$
where $\gamma_{\eta}$ has support on $(-\eta,\eta)$. Note that the $x,y$ derivatives of $\sigma_\eta$ are bounded independently of $\eta$. By taking $\eta$ sufficiently small the resulting foliation $\mathcal{F}_\eta$ can be made arbitrarily close to $\mathcal{F}$. We will call coordinates $(x,y,z)$ near an annular fence {\bf very nice} if the induced characteristic foliation $\mathcal{F}(A)$ on $A = \{(x,y,z) \ | \ y = 0\}$ given by intersecting leaves of $\mathcal{F}$ with $A$ is nice and the foliation is tangent to the $y$-intervals near $A$. We have seen above that we can always find very nice coordinates after a small perturbation and we note this in the following:
\begin{lem}[Nice coordinates]\label{nice_approx}
Let $\gamma$ be an embedded curve in a leaf of a $C^0$-foliation $\mathcal{F}$. Then $\mathcal{F}$ can be $C^0$-approximated by foliations that are very nice for suitable coordinates on some neighbourhood of $\gamma$.
\end{lem}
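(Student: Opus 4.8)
The plan is to assemble the construction already sketched in the paragraphs preceding the statement into a single argument and then globalise it via the Cutting-off Lemma. First I would fix an annular fence $A = S^1 \times [-\epsilon,\epsilon]$ with core $\gamma$ whose interval fibres are transverse to $\mathcal{F}$; such an $A$ exists because $\gamma$ lies on a leaf, so one can push $\gamma$ off itself along a vector field transverse to $T\mathcal{F}$. By Lemma \ref{smooth} I may $C^0$-perturb $\mathcal{F}$ so that it is smooth near a single fibre $0 \times [-\epsilon,\epsilon]$, and then flow a smooth vector field $Y$ that is $C^0$-close to $T\mathcal{F}$, agrees with $T\mathcal{F}$ near that fibre, and is tangent to the leaf $L_0 \ni \gamma$; this produces coordinates $(x,y,z)$ on $A$ with $\frac{\partial}{\partial y} = Y$, in which $\mathcal{F}$ is tangent to $z = 0$ near $0 \times [-\epsilon,\epsilon]$ and $\gamma = \{y = 0,\, z = 0\}$. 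Away from a small neighbourhood $N' \subseteq N = \mathcal{O}p(0 \times [-\epsilon,\epsilon])$ the foliation is recorded by a family $f_z(x,y) \co D^2 \longrightarrow \R$, strictly monotone in $z$, with $f_z \equiv z$ on the overlap with $N$ and $f_0 \equiv 0$.

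Next I would build the model. Let $h$ be the holonomy of $\mathcal{F}$ along $\gamma$ relative to the transversal $0 \times [-\epsilon',\epsilon']$ (with $\epsilon' < \epsilon$ chosen so that $h$ is defined and $h([-\epsilon',\epsilon']) \subset [-\epsilon,\epsilon]$), and construct a \emph{nice} suspension foliation $\mathcal{F}_{nice}$ on $A$ realising $h$: concretely, choose a smooth family of leaf parametrisations $t \mapsto (t, \gamma_z(t))$ with $\gamma_z(0) = z$, $\gamma_z(1) = h(z)$, $\gamma'_z \equiv 0$ near $0 \times [-\epsilon,\epsilon]$, $\gamma'_z$ having the sign of $h(z) - z$ and being nonzero away from that neighbourhood, and $\gamma'_z \equiv 0$ whenever $z = h(z)$. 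Such a family is obtained by redistributing the germ of $h$ along a bump supported off the flat region, taking care that for each fixed $t$ the value $\gamma_z(t)$ stays strictly increasing in $z$ so that one genuinely obtains a foliation. Taking the product of $\mathcal{F}_{nice}$ with the $y$-intervals yields a family $f^{nice}_z(x,y) \co D^2 \longrightarrow \R$ with $f^{nice}_z \equiv z$ on the overlap with $N$ and $f^{nice}_0 \equiv 0$.

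Finally I would interpolate. Using a cut-off of the product form $\sigma_\eta(x,y,z) = \rho(x,y)\,\gamma_\eta(z)$, with $\rho$ supported in $A$ and identically $1$ near $\gamma$, and $\gamma_\eta$ supported in $(-\eta,\eta)$ and identically $1$ near $0$, define the new family by $g_z(x,y) = (1-\sigma_\eta)\,f_z(x,y) + \sigma_\eta\, f^{nice}_z(x,y)$. Since $f_z$ and $f^{nice}_z$ are both strictly increasing in $z$ and agree near $N$, the convex combination is again strictly increasing in $z$, so this defines a foliation $\mathcal{F}_\eta$ on $A$ that agrees with $\mathcal{F}$ near $\partial A$; Lemma \ref{loc_principle} then extends it over all of $M$. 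The reason for concentrating the cut-off in the $z$-direction is that the $x$- and $y$-derivatives of $\sigma_\eta$ are bounded independently of $\eta$, so as $\eta \to 0$ both the function values and the planes $\frac{\partial g_z}{\partial x} \wedge \frac{\partial g_z}{\partial y}$ converge to those of $\mathcal{F}$, i.e.\ $\mathcal{F}_\eta \to \mathcal{F}$ in the foliated $C^0$-norm (closeness there requires no control on $\frac{\partial g_z}{\partial z}$). Since $\rho \equiv 1$ near $\gamma$, the characteristic foliation induced by $\mathcal{F}_\eta$ on $A = \{y = 0\}$ is exactly $\mathcal{F}_{nice}$ and $\mathcal{F}_\eta$ is tangent to the $y$-intervals near $A$, so $(x,y,z)$ are very nice coordinates for $\mathcal{F}_\eta$.

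The main obstacle is the model step: realising the prescribed holonomy germ $h$ by a suspension that is simultaneously flat near the distinguished fibre, has the required sign of $\gamma'_z$ away from it, and — crucially — stays monotone in $z$ for each fixed $t$ so that it really is a foliation. A secondary point requiring care is that the interpolation with the concentrated $z$-cut-off must not disturb the characteristic foliation on $A$ (hence $\rho \equiv 1$ near $\gamma$) and must leave the modified plane field continuous where $\mathcal{F}$ was not smooth; but since all modifications here take place in the region where the ambient foliation has already been straightened into the smooth product coordinates, this is routine.
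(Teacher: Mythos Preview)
Your proposal is correct and follows essentially the same construction that the paper presents in the paragraphs immediately preceding the lemma (those paragraphs \emph{are} the paper's proof, the lemma being stated only as a summary). You have in fact made several steps slightly more explicit than the paper does, notably the monotonicity-in-$z$ requirement on the model $\mathcal{F}_{nice}$ and the appeal to the Cutting-off Lemma to globalise.
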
 
Note that the size of the neighbourhood given in Lemma \ref{nice_approx} is not fixed -- its height will be very small if the resulting foliation is to be close to $\mathcal{F}$. We will call a fence {\bf nice at the boundary}, if its boundary is either transverse or equal to the $z$-levels along the boundary. Note also that a very nice annulus can always be made nice near the boundary by a $C^{0}$-small perturbation.
\begin{rem}\label{rem_nice_fences}
It is easy to see that blowing up leaves and inserting holonomy can be assumed to preserve niceness at the boundary. This is obvious in the case when the boundary is transverse, since this is $C^0$-stable. If a boundary curve $\gamma$ is a $z$-level then we can blow up in such a way that the blow up near $\gamma$ just looks like a product $\gamma \times [0,\epsilon]$. We then insert holonomy so that the foliation remains nice. Furthermore, if a fence $A$ is nice near the boundary and the characteristic foliation is transverse to $\partial A$, then we can 
$C^0$-approximate $\mathcal{F}$ by foliations that are smooth near $\partial A$ as in Section \ref{sec_smooth}.
\end{rem}
\section{Producing contact regions near curves with contracting holonomy}\label{contact_regions}
Given an embedded curve $\gamma$ lying in a leaf of a foliation $\mathcal{F}$ one defines the holonomy of the foliation around $\gamma$, by pushing a small transversal starting at a base point along the leaves of $\mathcal{F}$ and considering the (germ) of the first return map. We will say that a curve has non-trivial holonomy if this return map $h$ is non-trivial germinally.

Suppose that the holonomy $h$ around some curve is contracting for some interval, i.e. $h(I)$ is properly contained in the interior of $I$ for some closed interval $I$ transverse to $\mathcal{F}$. Note that this is equivalent to the existence of a normal annular fence $A$ so that the induced oriented characteristic foliation points into $A$ along its boundary. We shall call such an annulus an {\bf annular fence with contracting holonomy}. If the opposite inclusion holds, i.e. $h(I)$ contains $I$ in its interior, then we will say that $h$ is repelling for some interval. A special case of this is when the holonomy is \textbf{sometimes attracting}, which means that there are sequences $t_n^+,t_n^-$ of positive resp.\ negative numbers so that
$$t_n^+ \searrow 0 \textrm{ and } h(t_n^+) < t_n^+  \quad , \quad t_n^- \nearrow 0 \textrm{ and } h(t_n^-) > t_n^-.$$
The holonomy is called sometimes repelling if the inequalities are reversed. Note that the condition that $h$ is sometimes attracting is equivalent to the existence of arbitrarily small contracting intervals. We will also say that the holonomy is sometimes attracting on one side if only one of these inequalities holds.

Given a contracting annular fence we next explain how to produce a contact structure close to the fence. In order to do this we will use a weakened version of the corresponding result in the smooth case (cf.\ \cite{ETh} Proposition 2.5.1; see also \cite{Pet}).

\begin{lem}\label{cont_holonomy}
Let $\mathcal{F}$ be a $C^0$-foliation on $M$ and let $A$ be a contracting annular fence. Assume that $\mathcal{F}$ is very nice with respect to a choice of smooth coordinates $(x,y,z)$ on a neighbourhood of $N(A) = S^1 \times [0,1] \times [-\eta,\eta]$ and that $\mathcal{F}$ is smooth on a neighbourhood $U$ of the horizontal boundary $S^1  \times [0,1] \times\{\pm\eta\}$. 

Then for $A' \subset A$ a slightly smaller fence, there is $C^0$-family of $C^0$-confoliations $(\xi_t)_{t \in [0,\delta]}$ on $ M \setminus N(A')$ so that 
\begin{itemize}
\item $\xi_t$ is tangent to $\mathcal{F}$ away from $\mathcal{O}p(N(A))$;
\item $\xi_0$ is everywhere tangent to $\mathcal{F}$;
\item $\xi_t$ is smooth and contact on some neighbourhood $\mathcal{O}p(\partial A)$ for $t>0$. 
\end{itemize}
Moreover, there is a smooth family of vector fields $(Y_t)_{t\in [0,\epsilon]}$ on $\mathcal{O}p(N(A))$ that are tangent to $\xi_t$ for each $t$ such that $Y_0 = \frac{\partial}{\partial y}$ and $Y_t =  \frac{\partial}{\partial y}$ outside a slightly smaller neighbourhood $\mathcal{O}p'(N(A)) \subset \mathcal{O}p(N(A))$.
\end{lem}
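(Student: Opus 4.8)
The plan is to run the smooth Eliashberg--Thurston perturbation (\cite{ETh}, Proposition~2.5.1; see also \cite{Pet}) inside the very nice coordinates, where it reduces to adding a $dy$--term to a defining $1$--form, and then to read off the family $(\xi_t)$ and the vector fields $(Y_t)$ directly. So first I would fix the normal form. On a neighbourhood of $N(A)=S^1\times[0,1]\times[-\eta,\eta]$ the very nice coordinates present $T\mathcal{F}=\ker\alpha$ near $A=\{y=0\}$ with $\alpha=dz-f(x,z)\,dx$, where $f$ is independent of $y$, $f\equiv0$ for $x$ near the transversal $\{x=0\}$, and --- translating ``nice'' together with the contracting condition that the characteristic foliation points into $A$ along $\partial A$ --- one has $f\le0$ for $z$ near $\eta$ and $f\ge0$ for $z$ near $-\eta$, with strict inequalities once $x$ is bounded away from $0$; moreover $\mathcal{F}$ is smooth on a neighbourhood of the horizontal boundary $\{z=\pm\eta\}$, hence of $\partial A$. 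Note that $\alpha\wedge d\alpha=0$, since $f$ does not depend on $y$.

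Next I would perturb. For a smooth function $g$ set $\alpha_t:=\alpha+t\,g\,dy$; since $g\,dy\wedge d(g\,dy)=0$ and $\alpha\wedge d\alpha=0$, a direct computation gives the \emph{exact} identity
\[
\alpha_t\wedge d\alpha_t\;=\;t\,\bigl(g_x+f\,g_z-g\,f_z\bigr)\,dx\wedge dy\wedge dz .
\]
So the whole matter is to choose $g$, smooth and supported in a neighbourhood of $\partial A$ contained in $\mathcal{O}p'(N(A))\setminus N(A')$ and inside the region where $\mathcal{F}$ is smooth, with $g_x+f g_z-g f_z\ge0$ everywhere and $>0$ on a neighbourhood of $\partial A$; one then defines $\xi_t:=\ker\alpha_t$ on $\mathrm{supp}(g)$ and $\xi_t:=T\mathcal{F}$ elsewhere, and $\delta$ may be taken arbitrarily small.

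Constructing $g$ is the only real work, and I expect it to be the main obstacle. Along the arc where $f\equiv0$ (near $\{x=0\}$) the coefficient is just $g_x$, so $g$ must increase in $x$ there, and then by periodicity $g_x$ is forced to be negative somewhere on the complementary arc; on that arc the \emph{strict} sign of $f$ near $\partial A$ --- this is exactly where ``contracting'' enters --- together with a suitable dependence of $g$ on $z$ must be exploited to keep the coefficient nonnegative and to dominate the term $g f_z$, over which there is no pointwise control. This is precisely the content of the smooth construction in \cite{ETh}, Proposition~2.5.1, which applies here because everything is smooth near $\partial A$; and if the holonomy is only non-strictly attracting, I would first use the freedom in the choice of the nice suspension model near $z=\pm\eta$ (the prescribed holonomy only reaches a smaller sub-interval) to put $f$ in a convenient form there before invoking it.

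Given such a $g$, the remaining assertions are formal. The displayed identity shows $\xi_t$ is a positive contact structure where the coefficient is positive and a smooth confoliation where it is $\ge0$ (replacing $g$ by $-g$ gives the negative case); off $\mathrm{supp}(g)$ it equals $T\mathcal{F}$, which is tangent to the globally defined $C^0$--foliation $\mathcal{F}$, so each $\xi_t$ is a $C^0$--confoliation in the sense of Definition~\ref{def_confol}; since the $dy$--term vanishes at $t=0$ we get $\xi_0=T\mathcal{F}$; and $(\xi_t)$ depends smoothly, hence $C^0$--continuously, on $t$. Restricting to $M\setminus N(A')$ gives the stated family. For the vector fields I would set $Y_t:=\dfrac{\partial}{\partial y}-t\,g\,\dfrac{\partial}{\partial z}$ on $\mathcal{O}p(N(A))$; then $\alpha_t(Y_t)=t g-t g=0$, so $Y_t$ is tangent to $\xi_t$, while $Y_0=\dfrac{\partial}{\partial y}$ and $Y_t=\dfrac{\partial}{\partial y}$ outside $\mathrm{supp}(g)\subset\mathcal{O}p'(N(A))$. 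Thus the hard part is the single inequality $g_x+f g_z-g f_z\ge0$ holding on all of $\mathrm{supp}(g)$ rather than just near $\partial A$, and everything else is bookkeeping already encoded in the very nice coordinates.
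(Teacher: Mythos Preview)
Your approach is correct but genuinely different from the paper's. You perturb by adding a $tg\,dy$ term and reduce everything to the first-order inequality $g_x+fg_z-gf_z\ge 0$, which is the classical Eliashberg--Thurston mechanism. The paper instead keeps the defining form in the shape $dz-F_t(x,y,z)\,dx$ throughout: it first applies a preliminary diffeomorphism so that $\partial_z f$ has the right sign at $z=\pm\eta$, then pulls back $\alpha_{nice}$ by a family of graph-like diffeomorphisms $\Phi_t(x,y,z)=(x,y,g_t(y,z))$ that push down in $z$, obtaining $\hat f_t(x,z)\le f(x,z)$ with strict inequality near $z=\pm\eta$, and finally interpolates in $y$ between $f$ and $\hat f_t$ so that the contact condition becomes simply $\partial_y F_t>0$. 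The tangent vector field is then $Y_t=(\Phi_t)_*\partial_y$.

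What each buys: the paper's route is self-contained --- no black-box appeal to \cite{ETh} is needed once the five bullet-point properties of $\Phi_t$ are written down --- and it directly produces a \emph{tangential} $C^0$-confoliation in the sense of Definition~\ref{tangential_conf}, which is the form used downstream (Lemma~\ref{movie_quant} and Remark~\ref{y_coord}). Your route gives the cleaner explicit formula $Y_t=\partial_y-tg\,\partial_z$ and makes the $t$-linearity of $\alpha_t\wedge d\alpha_t$ transparent. The one place you should tighten is the invocation of Proposition~2.5.1 of \cite{ETh}: that statement produces contactness near the \emph{core} $\gamma$ of the fence, whereas here you need contactness near $\partial A$ (the top and bottom circles, which are transversal, not leafwise). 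The adaptation is not hard once you also normalise $\partial_z f$ at $z=\pm\eta$ as the paper does, but it is an adaptation rather than a direct citation; you flag this yourself as ``the main obstacle,'' and it is exactly the step the paper chooses to do by hand via $\Phi_t$ rather than by constructing $g$.
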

\begin{proof}
With respect to the very nice coordinates $(x,y,z)$ on a neighbourhood of $N(A)$ the foliation $\mathcal{F}$ is tangent to the plane field given by the kernel of the $1$-form:
$$\alpha_{nice} = dz - f(x,z)dx.$$
The assumption that the fence is contracting means that, after a small perturbation to make things nice at the boundary, we have $f(x,\eta) < 0$ and $f(x,-\eta)>0$. Now the foliation given by $\alpha_{nice}$ is \emph{diffeomorphic} to any other foliation pointing into $A$ on a neighbourhood of $\partial A$. In particular, we can assume (after applying a suitable diffeomorphism) that $\frac{\partial f}{\partial z}(x,\eta) >0$ and $\frac{\partial f}{\partial z}(x,-\eta) <0$ -- in other words $f$ is increasing in $z$ near the upper boundary component and decreasing near the lower boundary component.

We let $\hat{A}$ be a slight extension of $A$ and let 
$$N(\hat{A}) = S^1 \times [-\epsilon ,1 + \epsilon] \times [-\eta - \epsilon,\eta+\epsilon]$$
be a neighbourhood containing $N(A)$. Now define a smooth family of graph-like diffeomorphisms $\Phi_t(x,y,z) = (x,y, g_t(y,z))$ on $V_{\epsilon} = S^1 \times [-\epsilon,0] \times [-\eta-\epsilon,\eta+\epsilon]$ with the following properties:
\begin{itemize}
\item $\Phi_0 = id$;
\item $\Phi_t = id$ if $-\epsilon \le y \le -\epsilon/2$ or $|z\pm(\eta +\epsilon)| < \epsilon/2$;
\item $g_t(y,z)\le z$ and for $t>0$ the inequality is strict if $-\epsilon/3 \le y\le 0$ and $|z\pm (\eta+\epsilon)| > \epsilon/2$;
\item The map $g_t(y,z)$ is independent of $y$ for all $-\epsilon/3 \le y\le 0$;
\item For all $t>0$ we have $\frac{\partial g_t}{\partial z}<1$, if $-\epsilon/2 < z-\eta < \epsilon/2$ and $\frac{\partial g_t}{\partial z}>1$, if $-\epsilon/2< z + \eta < \epsilon/2$.
\end{itemize}
The above conditions codify the fact that the map $g_t$ pushes everything down and has a particular form near $z = \pm \eta$. Then on $S^1 \times [-\epsilon/3,0] \times [-\eta-\epsilon,\eta+\epsilon]$ the pullback satisfies:
$$\frac{1}{g'_t}\Phi^*_t\alpha_{nice} = \frac{1}{g'_t}\left(g'_tdz - f(x,g_t(z))dx\right) = dz -\frac{f(x,g_t(z))}{g'_t}dx = dz - \hat{f}_t(x,z)dx,$$
where $g'_t$ denotes $\frac{\partial g_t}{\partial z}$. In particular we have $\hat{f}_t(x,z)\le f(x,z)$ and the inequality is strict if $ |z\pm\eta| < \epsilon/2$ due to the properties described above. Thus we can interpolate between $\hat{f}_t(x,z)$ and $f(x,z)$ by a function $F_t(x,y,z)$ that is strictly monotone in $y$ for $-\epsilon/4 < y<1+\epsilon/4 $ and $z$ close to $\pm \eta$. The kernels of the family of $1$-forms
$dz - F_t(x,y,z)dx$
gives the desired $C^0$-confoliations on $N(A) \setminus N(A') $, where we set
$$N(A') = S^1 \times [\epsilon,1-\epsilon]\times [-\eta+\epsilon/3,\eta-\epsilon/3].$$
We finally extend this confoliation by $T\mathcal{F}$ outside of $N(\hat{A)}$ and set $Y_t = \Phi_t^* \frac{\partial}{\partial y}$, which is obviously tangent to $\xi_t$.
\end{proof}
\begin{rem}\label{repelling_case}
The proof of Lemma \ref{cont_holonomy} holds equally well for a repelling fence: one simply swaps the orientation of $\mathcal{F}$ and uses the same argument.
\end{rem}
\begin{rem}\label{y_coord}
Although the contact structures $\xi_t$ in Lemma \ref{cont_holonomy} are no longer tangent to the given $y$-coordinate vector field, this can in fact be assumed to be the case after a $C^{\infty}$-small coordinate change near $N(A)$ given by flowing along $Y_t$.
\end{rem}


Once we have produced enough contact regions we will need to transport this non-integrability along curves that end in a contact region. This is described precisely in the following lemma:
\begin{lem}[\cite{ETh} Lemma 2.8.2]\label{transport}
Let $V = I \times I \times I \subseteq \R^3$, where $I = [0,1]$ and suppose that we have an orientation preserving embedding into $M$ so that each $y$-interval $x \times I \times z$ is tangent to a positive smooth confoliation $\xi$ which is contact near the positive end $I \times \{1\} \times I$. 
 Then there is a smooth family of confoliations $(\hat{\xi}_t)_{t \in [0,1]}$ with $\hat{\xi}_0 = \xi$ so that $\xi_t$ is contact on a slightly smaller set $V' \subseteq V$ for any $t >0$ and agrees with $\xi$ outside of $V$. 
 \end{lem}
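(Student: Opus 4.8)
The plan is to use the well-known principle, which underlies the paper's notion of a tangential $C^0$-confoliation, that a co-oriented plane field containing the fixed vector field $\partial/\partial y$ is a positive confoliation exactly when the planes \emph{rotate monotonically} in the $y$-direction, and is contact exactly where this rotation is \emph{strictly} monotone. The lemma then just says that one may redistribute the rotation already present in the contact collar of $I\times\{1\}\times I$ so that a little of it occurs over all of the smaller box $V'$, without ever rotating backwards. Concretely: since every $y$-interval is tangent to $\xi$ and $V$ is simply connected, I would write $\xi=\ker\alpha$ with $\alpha=\cos\theta(x,y,z)\,dx+\sin\theta(x,y,z)\,dz$ for a smooth function $\theta\co V\to\mathbb{R}$, the slope angle of the characteristic line field $\xi\cap(\partial/\partial y)^{\perp}$, which lifts to a single-valued real function because $V$ is contractible. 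A one-line computation gives $\alpha\wedge d\alpha=(\partial_y\theta)\,dx\wedge dy\wedge dz$, so (the embedding being orientation preserving) the hypotheses become $\partial_y\theta\ge 0$ on $V$ and $\partial_y\theta\ge c_0>0$ on a set $W_0=(\delta',1-\delta')^2\times[1-\epsilon_0,1]$ contained in the contact collar, for suitably small $\delta',\epsilon_0>0$, since the contact region is an open neighbourhood of $I\times\{1\}\times I$.

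Fix a small $\delta>0$ with $\delta<\delta'$ and set $V'=[\delta,1-\delta]\times[2\delta,1-2\delta]\times[\delta,1-\delta]$. Choose a cut-off $\kappa(x,z)$ supported in $(\delta',1-\delta')^2$ with $\kappa\equiv1$ on $[\delta,1-\delta]^2$, and non-negative bumps $\eta_0,\eta_1$ of the variable $y$ with $\int_0^1\eta_0=\int_0^1\eta_1=1$, where $\eta_1$ is supported in $(\delta,1-\delta)$ with $\eta_1>0$ on $[2\delta,1-2\delta]$, while $\eta_0$ is supported in $[1-\epsilon_0,1-\epsilon_0/2]$, so that $\mathrm{supp}\,\eta_0\cap\mathrm{supp}\,\eta_1=\emptyset$ and $(\mathrm{supp}\,\kappa)\times(\mathrm{supp}\,\eta_0)\subseteq W_0$. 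Now define $\theta_t$ by
$$\frac{\partial\theta_t}{\partial y}(x,y,z)=\frac{\partial\theta}{\partial y}(x,y,z)+t\,\kappa(x,z)\bigl(\eta_1(y)-\eta_0(y)\bigr),\qquad\theta_t(x,0,z)=\theta(x,0,z),$$
and put $\hat\xi_t=\ker(\cos\theta_t\,dx+\sin\theta_t\,dz)$; explicitly $\theta_t=\theta+t\,\kappa(x,z)\int_0^y(\eta_1-\eta_0)$, which depends smoothly on $(t,x,y,z)$ and has $\hat\xi_0=\xi$ (one obtains the family first for $t\in[0,t_0]$ and reparametrises to $[0,1]$).

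It remains to verify the three required properties, which is where all the elementary bookkeeping sits. Since $\int(\eta_1-\eta_0)=0$ and both bumps are supported in $\{y>\delta\}$, one has $\theta_t=\theta$ for $y\le\delta$, for $y$ beyond the supports of the two bumps — in particular near the face $I\times\{1\}\times I$, where $\hat\xi_t$ therefore remains contact — and wherever $\kappa=0$; hence $\theta_t\equiv\theta$ near $\partial V$, so $\hat\xi_t$ extends by $\xi$ to a smooth plane field on $M$ agreeing with $\xi$ outside $V$. The only term that can lower $\partial_y\theta$ is $-t\kappa\eta_0$, which is supported in $W_0$ where $\partial_y\theta\ge c_0$; thus choosing $t_0$ so small that $t_0\,(\max\kappa)(\max\eta_0)<c_0$ keeps $\partial_y\theta_t\ge 0$ everywhere, i.e.\ each $\hat\xi_t$ is a positive confoliation. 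Finally, on $V'$ we have $\kappa\equiv1$ and $\eta_1>0$, so $\partial_y\theta_t\ge\partial_y\theta+t\,\eta_1(y)>0$ for every $t>0$, i.e.\ $\hat\xi_t$ is contact on $V'$.

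I expect the only genuine point of care — and it is mild — to be the two simultaneous constraints on the perturbation: it must be compactly supported inside $V$ (so that $\hat\xi_t$ glues to $\xi$) and \emph{mass-preserving in the $y$-variable} (so that the rotation borrowed from the contact collar is returned before one reaches the face $I\times\{1\}\times I$, forcing $\theta_t\equiv\theta$ there); once the supports of $\kappa,\eta_0,\eta_1$ are arranged to meet both, smoothness of the family and the confoliation inequality are automatic. If one prefers to stay with a defining form $dz-f(x,y,z)\,dx$ as elsewhere in the paper, the identical argument runs with $\theta$ replaced by $f$; the passage to the angle $\theta$ merely avoids having to assume in advance that $\xi$ is transverse to the $z$-intervals.
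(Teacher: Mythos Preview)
The paper does not give its own proof of this lemma; it is quoted directly from Eliashberg--Thurston \cite{ETh} and used as a black box. Your argument is the standard one and is correct in substance: writing $\xi=\ker(\cos\theta\,dx+\sin\theta\,dz)$ reduces the confoliation/contact condition to weak/strict monotonicity of $\theta$ in $y$, and your mass-preserving redistribution $\theta_t=\theta+t\,\kappa(x,z)\int_0^y(\eta_1-\eta_0)$ borrows a small amount of rotation from the contact collar and deposits it over $V'$ while keeping $\theta_t\equiv\theta$ near $\partial V$.

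There is one small bookkeeping slip: you require $\kappa$ to be supported in $(\delta',1-\delta')^2$ and identically $1$ on $[\delta,1-\delta]^2$, which forces $\delta'<\delta$, not $\delta<\delta'$ as you wrote. Simply reverse that inequality (first fix $\delta$, then choose $0<\delta'<\delta$ and restrict to the compact $W_0=[\delta',1-\delta']^2\times[1-\epsilon_0,1]$ to get the uniform lower bound $c_0$). With this corrected, all three verifications go through exactly as you describe.
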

 
\noindent Moreover, the same statement holds parametrically, since the condition of being (strictly) monotone is convex. Note that the coordinates in this case also depend on the parameter.
\begin{lem}[Parametric Version]\label{transport_para}
Let $(\xi_\sigma)_{\sigma \in [0,\delta]}$ be a smooth family of confoliations on $V \cong I \times I \times I \subseteq M$ and suppose that each $y_\sigma$-interval $x_\sigma \times I \times z_\sigma$ is tangent to $\xi_\sigma$ and that $\xi_\sigma$ is contact near the positive end $I \times \{1\} \times I$. Then there is a smooth family of confoliations $\hat{\xi}_{(\sigma,t)}$ with $\hat{\xi}_{(\sigma,0)} = \xi_\sigma$ so that $\xi_{(\sigma,t)}$ is contact on a slightly smaller subset $V' \subseteq V$ for any $t >0$ and agrees with $\xi_\sigma$ outside of $V$.
\end{lem}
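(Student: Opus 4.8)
The plan is to reduce to the non-parametric statement, Lemma~\ref{transport}, and then check that the construction there can be performed smoothly in the parameter $\sigma$; the only real issue is uniformity over the compact parameter interval $[0,\delta]$.

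First I would normalise the ambient data. Since the coordinates $(x_\sigma,y_\sigma,z_\sigma)$ on $V$ vary smoothly with $\sigma$, we get a smooth family of diffeomorphisms $\psi_\sigma\co I\times I\times I \longrightarrow V$ carrying $\partial/\partial y$ to the $y_\sigma$-coordinate field. Pulling back, $\psi_\sigma^*\xi_\sigma$ is for each $\sigma$ a positive confoliation on $I^3$ tangent to $\partial/\partial y$ everywhere and contact on a neighbourhood of $I\times\{1\}\times I$; by Definition~\ref{tangential_conf} it is the kernel of $dz - f_\sigma(x,y,z)\,dx$ with $f_\sigma$ weakly monotone in $y$ and $\partial f_\sigma/\partial y>0$ near $y=1$, everything depending smoothly on $\sigma$ because $\psi_\sigma$ does. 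It therefore suffices to build the desired family over $I^3$ and push it forward by $\psi_\sigma$.

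Next I would recall the mechanism behind Lemma~\ref{transport} / \cite{ETh} Lemma~2.8.2: one adds to $f_\sigma$ a perturbation $t\,\beta(x,y,z)$ with $\beta$ supported in $\mathrm{int}(I^3)$, chosen so that $\partial\beta/\partial y\ge 0$ on $\{y\le 1-\epsilon\}$ and $\partial\beta/\partial y>0$ on a slightly smaller box $V'\subseteq I^3$, the derivative $\partial\beta/\partial y$ being allowed to turn negative only on the collar $\{y>1-\epsilon\}$, where $\partial f_\sigma/\partial y$ is already strictly positive. For $t$ small, $dz - (f_\sigma + t\beta)\,dx$ is then a confoliation on $I^3$ (weak monotonicity in $y$ is preserved, the negative part of $t\,\partial\beta/\partial y$ near $y=1$ being absorbed by $\partial f_\sigma/\partial y$), it agrees with $\xi_\sigma$ outside $I^3$, and it is strictly monotone in $y$, hence contact, on $V'$. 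Setting $\hat\xi_{(\sigma,t)} = \ker\!\big(dz - (f_\sigma + t\beta)\,dx\big)$ and reparametrising $t$ to range over $[0,1]$ gives the required family, with $\hat\xi_{(\sigma,0)}=\xi_\sigma$.

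The one genuinely parametric point — and the step I expect to need the most care — is selecting a \emph{single} perturbation amplitude $t_0>0$ valid for all $\sigma$ at once. This is where compactness of $[0,\delta]$ enters: $\partial f_\sigma/\partial y$ is continuous in $(\sigma,x,y,z)$ and strictly positive on the compact collar $I\times[1-\epsilon,1]\times I$, so it has a uniform positive lower bound there, and the same $t_0$ then works for every $\sigma$. With $\beta$ fixed independently of $\sigma$, the family $\hat\xi_{(\sigma,t)}$ is manifestly smooth in $(\sigma,t)$, and pushing forward by $\psi_\sigma$ yields the confoliations on $V\subseteq M$ with the stated properties. The structural reason there is no obstruction is precisely the convexity remark preceding the statement: ``$f$ weakly monotone in $y$'' cuts out a convex set of tangential confoliations and ``$f$ strictly monotone in $y$ on $V'$'' an open convex condition, so all choices can be made to depend smoothly on the parameters without any homotopical barrier.
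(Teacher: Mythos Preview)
Your proposal is correct and follows the same approach as the paper, which in fact gives no formal proof at all: the paper simply remarks before the statement that ``the same statement holds parametrically, since the condition of being (strictly) monotone is convex'' and that the coordinates depend on the parameter. You have spelled out exactly what this sentence means in practice --- normalising via the $\sigma$-dependent coordinates, using a fixed perturbation $\beta$ in the model $I^3$, and invoking compactness of $[0,\delta]$ for a uniform $t_0$ --- so your argument is a faithful expansion of the paper's one-line justification.
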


\subsection*{Extending contact structures on thickened annuli}
In the final step of our approximation process we will need to fill in a contact structure on a thickened annulus $S^1 \times [0,1] \times [-\eta,\eta]$ under certain assumptions on the way the contact structure meets the boundary.
\begin{lem}[Filling in holes]\label{movie_quant}
Let $A \times [-\eta,\eta]= S^1 \times [-1,1]\times [-\eta,\eta]$ be a thickened annulus and let $(x,y,z)$ be smooth product coordinates. Assume that $\xi$ is a contact structure defined near $\partial (A \times [-\eta,\eta])$ such that:
\begin{itemize}
\item $\xi$  is tangent to the $y$-fibers and is $\epsilon$-$C^0$-close to the product foliation given by the $\{z = C\}$-slices;
\item The characteristic foliations on each annular slice $A_{y} = A \times \{y\}$ induced by $\xi$ are inward pointing along the boundary.
\end{itemize}
Then there is a contact structure $\widehat{\xi}$ extending $\xi$ to all of $A \times [-\eta,\eta]$ that is $(8\eta+ \epsilon)$-close to the product foliation in the $C^0$-sense.
\end{lem}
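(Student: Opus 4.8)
The plan is to build the extension $\widehat\xi$ over the solid region $A\times[-\eta,\eta]$ by a ``movie'' argument in the $z$-direction, viewing $z$ as time: for each $z\in[-\eta,\eta]$ we will prescribe a characteristic foliation on the annular slice $A_z=S^1\times[-1,1]\times\{z\}$ and an assignment of planes along $A_z$, and then invoke the standard fact that a family of plane fields over a surface which varies appropriately in a transverse direction and whose slicewise characteristic foliations are the characteristic foliations of an honest contact structure can be realised by a genuine contact structure (this is exactly the mechanism behind Lemma~\ref{transport} and the filling-in results of Colin and of Eliashberg--Thurston). Concretely, I would write $\xi$ near the boundary as the kernel of $dy - u(x,y,z)\,dx$ for a function $u$ which is strictly monotone in the appropriate variable along the relevant parts of $\partial(A\times[-\eta,\eta])$ — this is possible by the hypothesis that $\xi$ is tangent to the $y$-fibres and that the slicewise characteristic foliations point inward along $\partial A_y$ — and then interpolate $u$ over the interior while keeping the requisite monotonicity, exactly as in the interpolation steps of Lemmas~\ref{cont_holonomy} and \ref{transport}.

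First I would fix the normalisation: since $\xi$ is tangent to the $y$-fibres, near $\partial(A\times[-\eta,\eta])$ it is the kernel of a form $\beta = dz - f(x,y,z)\,dx + g(x,y,z)\,dy$, and $\epsilon$-$C^0$-closeness to the $\{z=C\}$ foliation bounds $|f|,|g|$ by (a constant times) $\epsilon$. The inward-pointing condition on the characteristic foliation of each $A_y$ means precisely that, as a connection in the $z$-direction over $A_y$, $\xi$ has contracting holonomy on $\partial A_y$; this is what lets me choose, on a collar of $\partial A$ inside each slice, a model contact germ with the right sign of $\partial f/\partial z$ near $z=\pm\eta$ and the right behaviour near the $S^1$- and $[-1,1]$-boundaries of $A$. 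Then I would extend over the interior of each slice by a function that is monotone in $y$ on the bulk and whose $C^0$-size is controlled: the only new $z$-dependence I introduce is through a term of the form $\eta\cdot(\text{bounded})\,dz$-type correction needed to keep the form contact across slices, and tracking the constants in the interpolation (slice thickness $2\eta$, four ``sides'' to traverse, plus the ambient $\epsilon$) is what produces the explicit bound $8\eta+\epsilon$.

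The key steps, in order, are: (1) put $\xi$ near the boundary into the normal form $dz - f\,dx + g\,dy$ with explicit $C^0$-bounds from the hypotheses; (2) record that the inward-pointing hypothesis translates into a sign condition on $\partial f/\partial z$ near $z=\pm\eta$ and into a transversality condition near $\partial A$, exactly as in the setup of Lemma~\ref{cont_holonomy}; (3) choose a model contact structure on a collar of $\partial(A\times[-\eta,\eta])$ that agrees with $\xi$ there and is a product in the bulk $z$-direction away from a thin shell near $z=\pm\eta$; (4) interpolate the defining function over the interior, preserving monotonicity in $y$ on the bulk (hence the confoliation condition) and upgrading it to strict monotonicity after a small perturbation so that the result is genuinely contact; (5) estimate the $C^0$-distance of the resulting plane field to the product foliation by summing the contributions of the interpolation over the annular directions and the $z$-shell, yielding $8\eta+\epsilon$.

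The main obstacle I expect is step (4): keeping the interpolated plane field a confoliation \emph{everywhere} on $A\times[-\eta,\eta]$ — not just a contact structure on the bulk — while simultaneously (a) matching $\xi$ on the boundary, (b) respecting that the characteristic foliations on the $A_y$-slices must remain inward-pointing so no new tangencies or bad holonomy are created as $y$ varies, and (c) keeping the $C^0$-size under the stated bound. The subtlety is that the monotonicity one needs for contactness across the $z$-shell near $z=\pm\eta$ (positive $\partial f/\partial z$ above, negative below) must be reconciled with the monotonicity in $y$ coming from the inward-pointing slices; this is precisely the compatibility that Lemma~\ref{cont_holonomy} was engineered to provide, so I would reuse that mechanism rather than re-derive it, and the bulk of the work is a careful but routine interpolation with bookkeeping of constants. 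Once the defining function is in hand, standard convexity of the contact (strict-monotonicity) condition finishes the argument.
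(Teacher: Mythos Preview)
Your proposal has several concrete problems that go beyond bookkeeping.

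\textbf{Wrong normal form.} If $\xi$ is tangent to the $y$-fibres then $\partial_y$ lies in $\xi$, so any defining $1$-form must annihilate $\partial_y$. Hence $\xi = \ker(dz - a(x,y,z)\,dx)$ with \emph{no} $dy$-term; your forms $dy - u\,dx$ and $dz - f\,dx + g\,dy$ are both inconsistent with the hypothesis. With the correct form, the contact condition is $\partial_y a > 0$, so the natural ``movie'' parameter is $y$, not $z$. Moreover the inward-pointing hypothesis on the $(x,z)$-annulus $A_y = S^1\times\{y\}\times[-\eta,\eta]$ translates into sign conditions on $a$ itself, namely $a(x,y,\eta)<0$ and $a(x,y,-\eta)>0$, not on $\partial_z a$.

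\textbf{Missing the mechanism that gives the bound.} The paper's proof does not simply interpolate $a$ in $y$. The obstacle is that a naive monotone-in-$y$ interpolation of the boundary data $a(x,\pm 1,z)$ need not stay bounded by $\epsilon$, and there is no reason the two boundary functions are comparable. The paper first interpolates inward from $y=\pm1$ to $y=\pm\tfrac12$ toward special functions $b^\pm(x,z)$ each having a single prescribed interval of zeroes (closed orbits) at a chosen height. It then applies a family of $z$-reparametrisations $\Phi^\pm(x,y,z)=(x,y,f_{(\varphi(y),\psi(y))}(z))$ to stretch and shift these intervals so that the pulled-back functions satisfy $\hat b^-\le \hat b^+$ and remain $\epsilon$-small; only then can one interpolate monotonically in $y$ across the middle. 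The $8\eta$ enters precisely here: $|\partial_y\Phi^\pm|\le |\varphi'|\,|\partial_\tau f| + |\psi'|\,|\partial_\sigma f|\le 2\cdot 2\eta + 2\cdot 2\eta = 8\eta$, since the $z$-reparametrisations move points by at most $2\eta$ and the step functions $\varphi,\psi$ can be chosen with derivative bounded by $2$. Your ``four sides to traverse'' does not capture this; without the $b^\pm$-and-reparametrisation device you have no argument that the interpolation can be made both monotone in $y$ and $C^0$-small.
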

\begin{proof}
Note that in terms of the coordinates $(x,y,z)$ the contact structure $\xi$ is given as the kernel of a $1$-form:
$$dz - a(x,y,z)dx.$$
The assumptions on $\xi$ mean that $a(x,y,\eta) <0, a(x,y,-\eta) >0$ and $|a(x,y,z)| < \epsilon$. It is then easy to define a function $b^-(x,z)$ on the annulus $A_{-1/2}$ that has the following properties (see Figure \ref{Graph_caption}):
\begin{itemize}
\item $|b^-(x,z)| < \epsilon$;
\item $a(x,-1,z) \le b^-(x,z)$;
\item  $b^-(x,z) = a(x,-1/2,z)$ for $z$ close to $\pm \eta$;
\item For each fixed $x$ we have $b^-(x,z) =0$ precisely on $I^- = [\eta - \delta,\eta-\delta/2]$ for some (sufficiently) small $\delta$. (Note: this interval does \emph{not} depend on $x$)
\end{itemize}
We then interpolate between $a(x,-1,z)$ and $b^-(x,z)$ to obtain a function $\hat{a}(x,y,z)$ on $S^1 \times [-1,-1/2]\times [-\eta,\eta]$ that is monotone in $y$ so that:
\begin{itemize}
\item The kernel of the $1$-form $dz - \hat{a}(x,y,z)dx$ is a confoliation;
\item $\hat{a}(x,y,z) = a(x,y,z)$ near the horizontal boundary  $z = \pm \eta$ as well as near the annulus $S^1 \times \{-1\} \times [-\eta,\eta]$;
\item The kernel of $dz - \hat{a}(x,y,z)dx$ is $\epsilon$-close to the product foliation;
\item $\hat{a}(x,y,z)$ is constant in the $y$-direction for $y \in [-3/4,-1/2]$ away from an arbitrarily small neighbourhood of the horizontal boundary.
\end{itemize}
We can also extend the contact structure to a confoliation in a similar way on $S^1 \times [1/2,1]\times [-\eta,\eta]$ so that the characteristic foliation on $A_{1/2}$ has precisely one interval of closed orbits as well. We denote the corresponding function on $A_{1/2}$ by $b^+(x,z)$.

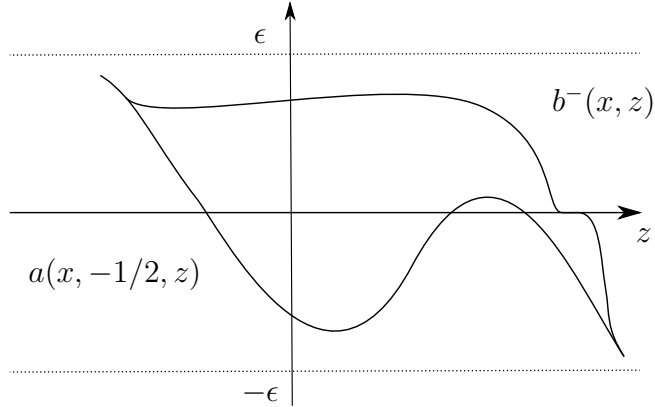
\begin{figure}[h]\label{Graph}
\psset{xunit=.5pt,yunit=.5pt,runit=.5pt}
\begin{pspicture}(477.59997559,305.78060913)
\rput(450,230){$b^-(x,z)$}  \rput(80,100){$a(x,-1/2,z)$} \rput(190,280){$\epsilon$} \rput(190,10){$-\epsilon$} \rput(480,130){$z$}
{
\newrgbcolor{curcolor}{0 0 0}
\pscustom[linewidth=0.85399997,linecolor=curcolor]
{
\newpath
\moveto(212.59798,305.35360913)
\lineto(214.30256,0.42700913)
}
}
{
\newrgbcolor{curcolor}{0 0 0}
\pscustom[linestyle=none,fillstyle=solid,fillcolor=curcolor]
{
\newpath
\moveto(212.64571898,296.81374284)
\lineto(216.08076109,293.41689192)
\lineto(212.59798,305.35360913)
\lineto(209.24886806,293.37870073)
\lineto(212.64571898,296.81374284)
\closepath
}
}
{
\newrgbcolor{curcolor}{0 0 0}
\pscustom[linewidth=0.85399997,linecolor=curcolor]
{
\newpath
\moveto(212.64571898,296.81374284)
\lineto(216.08076109,293.41689192)
\lineto(212.59798,305.35360913)
\lineto(209.24886806,293.37870073)
\lineto(212.64571898,296.81374284)
\closepath
}
}
{
\newrgbcolor{curcolor}{0 0 0}
\pscustom[linewidth=1.09232843,linecolor=curcolor]
{
\newpath
\moveto(88.19442,233.90680913)
\curveto(110.90915,208.10706913)(292.38976,254.91169913)(357.28358,227.16652913)
\curveto(414.46385,202.63043913)(404.99539,146.87405913)(418.41912,146.69870913)
\curveto(431.84285,146.52336913)(413.52199,146.69870913)(430.47208,146.69870913)
\curveto(447.42217,146.69870913)(447.60807,111.72536913)(451.22201,89.76852913)
\curveto(454.83595,67.81168913)(451.34204,60.04310913)(465.30163,37.68576913)
}
}
{
\newrgbcolor{curcolor}{0 0 0}
\pscustom[linewidth=1.09232843,linecolor=curcolor]
{
\newpath
\moveto(465.30163,37.68576913)
\curveto(425.67713,99.40554913)(372.19977,231.73287913)(304.70131,106.03187913)
\curveto(237.20285,-19.66911087)(161.1722,132.99960913)(141.32189,158.15774913)
\curveto(121.47158,183.31588913)(101.37803,219.35379913)(88.19442,233.90680913)
\curveto(75.01081,248.45981913)(69.01501,250.40214913)(69.01501,250.40214913)
}
}
{
\newrgbcolor{curcolor}{0 0 0}
\pscustom[linewidth=1.09200001,linecolor=curcolor]
{
\newpath
\moveto(477.05399,146.69870913)
\lineto(0.546,146.69870913)
}
}
{
\newrgbcolor{curcolor}{0 0 0}
\pscustom[linestyle=none,fillstyle=solid,fillcolor=curcolor]
{
\newpath
\moveto(466.13398992,146.69870913)
\lineto(461.76598989,142.3307091)
\lineto(477.05399,146.69870913)
\lineto(461.76598989,151.06670916)
\lineto(466.13398992,146.69870913)
\closepath
}
}
{
\newrgbcolor{curcolor}{0 0 0}
\pscustom[linewidth=1.09200001,linecolor=curcolor]
{
\newpath
\moveto(466.13398992,146.69870913)
\lineto(461.76598989,142.3307091)
\lineto(477.05399,146.69870913)
\lineto(461.76598989,151.06670916)
\lineto(466.13398992,146.69870913)
\closepath
}
}
{
\newrgbcolor{curcolor}{0 0 0}
\pscustom[linewidth=0.97799999,linecolor=curcolor,linestyle=dashed,dash=0.978 1.956]
{
\newpath
\moveto(477.05399,267.41300913)
\lineto(0.546,265.98440913)
}
}
{
\newrgbcolor{curcolor}{0 0 0}
\pscustom[linewidth=0.97799999,linecolor=curcolor,linestyle=dashed,dash=0.978 1.956]
{
\newpath
\moveto(477.05399,27.41300913)
\lineto(0.546,25.98440913)
}
}
\end{pspicture} \caption{The functions $a(x,-1/2,z)$ and $b^-(x,z)$ for a given $x$.}\label{Graph_caption}
\end{figure}

Let $z_0 \in I^-$ and consider the interval $J_{\tau,\sigma} =[-\eta+\tau(\eta + z_0),\eta-\sigma(\eta - z_0)]$ for any $\tau,\sigma \in (0,1]$. We next choose a smooth family of diffeomorphisms $f_{\tau,\sigma}(z)$ on $[-\eta,\eta]$ parametrised by $\tau,\sigma \in (0,1]$ with the following properties:
\begin{itemize}
\item $f_{\tau,\sigma}(z) = id$ near $\pm \eta$ and $f_{1,1} = id$;
\item $f_{\tau,\sigma}(z) (J_{\tau,\sigma}) \subseteq I^-$;
\item $f'_{\tau,\sigma}(z) \ge 1$ if $f_{\tau,\sigma}(z) \ne I^-$. 
\end{itemize}
Then pulling back the form $dz -b^-(x,z)dx$ under $f_{\tau,\sigma}$ and dividing by $f'_{\tau,\sigma}(z)$ gives
$$dz-\frac{1}{f'_{\tau,\sigma}(z)}b^-(x,f_{\tau,\sigma}(z))dx = dz-\hat{b}^-_{\tau,\sigma}(x,z)dx$$
on the annulus $A$, which we identify with $A_{-1/2}$. By construction we have $|\hat{b}^-_{\tau,\sigma}(x,z)| < \epsilon$ for any $\tau,\sigma \in (0,1]$. Note that as $\tau,\sigma$ increase, pulling back under $f_{\tau,\sigma}$ has the effect of increasing the length of the interval of zeroes and making the function very small away from an increasingly small neighbourhood of the boundary. In a similar way we define a family of diffeomorphisms $g_{\tau,\sigma}(z)$ so that 
$$|\hat{b}^+_{\tau,\sigma}(x,z)| = \left|\frac{1}{g'_{\tau,\sigma}(z)}b^+(x,g_{\tau,\sigma}(z))\right| < \epsilon.$$
By choosing $(\tau_0,\sigma_0)$ and $(\tau_1,\sigma_1)$ with $\tau_0 < \tau_1$ resp.\ $\sigma_0 > \sigma_1$ appropriately we can assume that $\hat{b}^-_{\tau_0}(x,z) \le \hat{b}^+_{\sigma_0}(x,z)$ and that the inequality is strict on a neighbourhood of the boundary which intersects the region on which $\hat{a}(x,y,z)$ is constant in $y$. We can clearly assume that for fixed $z$ the function $f_{\tau,\sigma}(z)$ is affine in $\tau$ and $\sigma$ (individually) on $[\tau_0,1] \times [\sigma_0,1]$.

We next define a graph-like diffeomorphism $\Phi^-$ on $S^1 \times [-1,-1/2]\times [-\eta,\eta]$  by
$$(x,y,z) \longmapsto (x,y , f_{(\varphi(y),\psi(y))}(z))$$
for an appropriate choice of non-increasing functions 
$$\varphi\co [-3/4,-1/2]\longrightarrow [\tau_0,1] \quad ,\quad \psi \co [-3/4,-1/2]\longrightarrow [\sigma_0,1]$$
which are constant near the end points and satisfy 
$$|\varphi'(y)| \le 2|1- \tau_0| < 2 \textrm{ resp.\ } |\psi'(y)| \le 2|1- \sigma_0| < 2.$$  

 \noindent Note that since the partial derivatives of $f =f_{\tau,\sigma}(z)$ with respect to $\tau,\sigma$ are constant and bounded by $|f_{\tau_0,\sigma_0}(z) - z| < 2\eta$. We thus have estimates
\begin{align*}
\left|\frac{\partial \Phi^-}{\partial y \ }\right| &\le |\varphi'(y)|\left|\frac{\partial f}{\partial \tau}(\varphi(y),\psi(y))(z)\right| + |\psi'(y)|\left|\frac{\partial f}{\partial \sigma}(\varphi(y),\psi(y))(z)\right| < 8\eta\\
\end{align*}
and the new plane field given via pullback is spanned by
$$X = \partial_x + \hat{b}^-_{\tau_0}(x,z)\partial_z\textrm{ and }Y = \partial_y + \frac{\partial \Phi^-}{\partial y \ }\partial_z$$
at each point at which the map $\Phi^-$ is non-trivial. It follows that the planes spanned by these vectors are $(8\eta +\epsilon)$-close to the product foliation, since the cross product $X \times Y$ lies at a distance at most $8\eta +\epsilon$ from $\partial_z$.  Similarly for appropriate step functions $\alpha,\beta$ we can define $\Phi^+$ on $S^1 \times [\frac{1}{2},1]\times [-\eta,\eta]$ by
$$(x,y,z) \longmapsto (x,y,g_{ (\alpha(y),\beta(y))}(z)).$$
In the same way the plane field given by pulling back under $\Phi^+$ is $(8\eta +\epsilon)$-close to the product foliation. We can then interpolate between $ \hat{b}^-_{\tau_0}(x,z) $ and $ \hat{b}^+_{\sigma_0}(x,z)$ to obtain a function that is (weakly) monotone in $y$ so that the kernel of the resulting $1$-form
$$dz - \bar{a}(x,y,z)dx$$
is a confoliation that agrees with $\xi$ near the boundary and is $(8\eta +\epsilon)$-close to the product foliation. We finally perturb this confoliation to a contact structure using Lemma \ref{transport} (or perhaps just common sense).
\end{proof}
\begin{rem}
Note that we have used the standard Euclidean norm in Lemma \ref{movie_quant}. In general the bound is given by $C \cdot (8\eta +\epsilon)$  for some constant $C$ depending only on the initial choice of (very nice) coordinates.
\end{rem}

\section{Polyhedral decompositions and Jiggling}\label{jiggling_sec}
Following ideas of Colin \cite{Col} we now describe how to modify plane fields near the $2$-skeleton of a polyhedral decomposition so that they under certain holonomy assumptions that can be extended to (tight) contact structures over the $3$-cells.
\subsection{General Position and Jiggling}
By a {\bf polyhedron} $P \subseteq \R^n$ we will mean a finite union of compact {\bf convex} polyhedra.
\begin{defn}[Thurston \cite{Th}]
Let $\xi$ be any continuous distribution of codimension-$k$. Then an $n$-dimensional polyhedron $P \subseteq \R^n$ is in {\bf general position} with respect to $\xi$ if the projection $\R^n \longrightarrow \R^n/\xi_p$ is non-degenerate on all faces of dimension $k$ for every $p \in P$. 
\end{defn}
\noindent Note that the general position condition is $C^0$-open. We then have Thurston's jiggling lemma.
\begin{lem}[Jiggling Lemma \cite{Th}]\label{jiggling_lemma}
Let $\xi$ be any continuous distribution on a manifold $M$ with smooth triangulation $\mathcal{T}$. Then after subdividing and perturbing $\mathcal{T}$ we may assume that $\xi$ is in general position with respect to the new triangulation $\mathcal{T}'$. Moreover, each simplex can be assumed to be arbitrarily small.
\end{lem}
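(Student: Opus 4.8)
The plan is to reduce the statement to Thurston's original jiggling argument, carried out simplex by simplex with control on the combinatorics. First I would fix a smooth triangulation $\mathcal{T}$ and recall that general position with respect to $\xi$ is a $C^0$-open condition on the (finitely many) $k$-faces of the triangulation; hence it suffices to produce \emph{one} subdivision-and-perturbation that works, and small perturbations of it will work too. The key preliminary reduction is that, since $\xi$ is continuous and $M$ is compact, on each simplex $\sigma$ of a sufficiently fine subdivision the distribution $\xi$ is $C^0$-close to a \emph{constant} distribution $\xi_\sigma$ (the value of $\xi$ at the barycenter, say, transported via a fixed chart). So I would first apply iterated standard (e.g.\ barycentric, or better a ``binary'' edge-subdivision that keeps simplices uniformly round) subdivision to make every simplex lie in a coordinate chart and have diameter less than a prescribed $\delta$, small enough that $\xi$ varies by less than $\epsilon/2$ across each simplex in that chart.

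Next comes the perturbation. Working in a chart around a simplex $\sigma$, the requirement is that the projection $\R^n \to \R^n/\xi_p$ restricted to each $k$-face be injective (``non-degenerate'') for every $p \in \sigma$; by the closeness just arranged it is enough to achieve this for the single constant distribution $\xi_\sigma$, i.e.\ that no $k$-face of the perturbed triangulation be ``too tangent'' to the fixed $k$-plane $\xi_\sigma^\perp$-complement. Since being non-degenerate with respect to a fixed linear projection is an open dense condition on the position of the vertices, a generic arbitrarily small displacement of the vertices achieves it for one simplex. The standard device — and this is where I would cite Thurston's argument verbatim — is to move the vertices one at a time: after jiggling vertex $v$ one must not destroy the general-position property already obtained on faces not containing $v$ (automatic, since those faces are untouched) and must choose the displacement of $v$ generically with respect to the finitely many linear conditions coming from all $k$-faces through $v$ and all the relevant constant distributions $\xi_{\sigma}$ for $\sigma \ni v$. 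Finitely many open dense conditions have nonempty intersection, so such a displacement exists and can be taken as small as we like; doing this over all vertices in turn (a finite process, since after the subdivision there are finitely many) yields the perturbed triangulation $\mathcal{T}'$.

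Finally I would check the two remaining clauses. The simplices of $\mathcal{T}'$ are still arbitrarily small: subdivision made them small, and the perturbation, being $C^0$-small, changes diameters by an arbitrarily small amount, so after shrinking the perturbation size (which only helps the genericity conditions) the diameter bound persists. And the general-position conclusion for the actual $\xi$, not just the $\xi_\sigma$, follows from the $C^0$-openness noted at the start together with the $\epsilon/2$ estimate: non-degeneracy is robust under replacing $\xi_\sigma$ by $\xi$ itself. The main obstacle — really the only non-bookkeeping point — is verifying that the vertex-by-vertex perturbation can be made simultaneously compatible with all incident faces and all nearby (constant-model) distributions without undoing earlier progress; this is exactly the content of Thurston's lemma, so in the write-up I would either invoke \cite{Th} directly or reproduce the one-vertex-at-a-time induction, keeping track that the conditions imposed at each stage are open and dense and finite in number.
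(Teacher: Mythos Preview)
The paper does not prove this lemma at all; it is stated with attribution to Thurston \cite{Th} and used as a black box. Your proposal is a faithful outline of Thurston's original argument (subdivide so $\xi$ is nearly constant on each simplex, then perturb vertices one at a time using that general position is open and dense), so there is nothing to compare---you have supplied the proof the paper chose to omit by citation.
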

Given a polyhedron $P\subseteq \R^3$ in general position with respect to a plane field there are precisely two vertices $v_+,v_-$ called {\bf supporting vertices} so that $P$ is contained in one of the half spaces determined by the cooriented planes $\xi_{v_{\pm}}$. We denote by $v_+$ the vertex at the top and by $v_-$ the vertex at the bottom of $P$ with respect to the coorientation of $\xi$. Note that all these definitions are diffeomorphism invariant. Furthermore, if $P$ is in general position with respect to $\xi$, then we obtain a continuous characteristic line field on $\partial P$ away from the supporting vertices, which is bi-valued along the interiors of edges. If the original line field was the tangent distribution of a foliation, then this line field is integrable and the resulting foliation $\xi(\partial P)$ defines a (continuous) holonomy map for any transversal joining the two supporting vertices (which is trivial). For a general $C^0$-plane field $\xi$ this holonomy map may not be well-defined. However, more or less by definition, one can define the holonomy if $\xi$ is a $C^0$-confoliation (cf.\ Remark \ref{rem_hol} ff.).

By cleverly adding and removing small tetrahedra from the simplices in the triangulation obtained by jiggling, Colin \cite{Col} showed that there is a polyhedral decomposition where each vertex is a supporting for at most one polyhedron. It is important to note that after this modification the polyhedra are no longer tetrahedra and may not be convex. Moreover, these modified polyhedra do not meet nicely, in that two polyhedra may intersect only in part of a face or an edge (cf.\ Figure \ref{Colin_poly}). The advantage of Colin's notion of a polyhedral decomposition is that it allows constructions to be done near supporting vertices one polyhedron at a time.
\begin{lem}[\cite{Col}, Lemme 2.3]\label{lem_support}
Let $\xi$ be a smooth plane field on a closed $3$-manifold $M$. Then there is a polyhedral decomposition of $M$ such that $\xi$ is in general position with respect to each polyhedron and each vertex is supporting for at most one polyhedron. 

Moreover, there are precisely $3$ edges that meet at each supporting vertex and each skeleton is arbitrarily close the corresponding skeleton of the original triangulation.
\end{lem}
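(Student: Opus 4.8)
The plan is to follow Colin's argument \cite{Col} directly, starting from the output of the Jiggling Lemma (Lemma \ref{jiggling_lemma}). First I would apply Lemma \ref{jiggling_lemma} to the smooth plane field $\xi$ to obtain a smooth triangulation $\mathcal{T}'$, arbitrarily fine and arbitrarily close to the original one, with respect to which $\xi$ is in general position; in particular each $3$-simplex $\Delta$ has a well-defined pair of supporting vertices $v_+(\Delta), v_-(\Delta)$. The problem is that a given vertex $v$ of $\mathcal{T}'$ will in general be a supporting vertex for many of the simplices in its star. The idea is to \emph{transfer} the offending corners: near such a vertex $v$, inside each simplex $\Delta$ for which $v$ is (say) the top supporting vertex, one excises a small polyhedral ``cap'' $C_{\Delta,v}$ cut off by a hyperplane close and nearly parallel to $\xi_v$, and glues it onto a neighbouring simplex across the face of $\Delta$ opposite to $v$ (or onto whichever adjacent polyhedron already ``owns'' a supporting vertex there). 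Because the triangulation is in general position and the caps can be taken as small as one likes, the modified cells $\widehat{\Delta} = (\Delta \setminus C_{\Delta,v}) \cup (\text{caps received from neighbours})$ remain in general position with respect to $\xi$, and one arranges the heights of the cutting hyperplanes so that $v$ is no longer a supporting vertex of any modified cell meeting it. Iterating this over all vertices — using fineness of $\mathcal{T}'$ so that the local modifications at distinct vertices are disjoint and do not interfere — produces a polyhedral decomposition in which every vertex supports at most one polyhedron.

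The second assertion, about the combinatorics at a supporting vertex, I would obtain by a further local normalization: after the cap-transfer procedure, a vertex $w$ that still supports some polyhedron $P$ can be arranged (by choosing the excised/attached caps near $w$ appropriately, again using that everything is local and arbitrarily small) so that in a neighbourhood of $w$ the decomposition looks like a cone on a planar triangle, i.e.\ exactly three edges of the decomposition emanate from $w$ and they bound three $2$-faces of $P$. Concretely, one slices off all but a single ``tripod'' worth of structure at $w$; the three-edge condition is what one needs later in Section \ref{jiggling_sec} to fill in $3$-cells near supporting vertices one at a time. Finally, the closeness of each skeleton to the original is automatic: the Jiggling Lemma already gives skeleta $C^0$-close to those of the starting triangulation, and all subsequent modifications take place in arbitrarily small neighbourhoods of the vertices, hence move the skeleta by an arbitrarily small amount.

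The main obstacle, and the part that requires genuine care rather than bookkeeping, is verifying that the cap-transfer does not destroy the general-position property and does not reintroduce new multiply-supporting vertices elsewhere. One has to check that when a cap is sliced off by a hyperplane $H$ close to $\xi_v$ and attached to an adjacent cell, (i) the new ``low'' corner of that adjacent cell created by $H$ is not a supporting vertex for it — this is why $H$ must be chosen with the correct tilt relative to $\xi$ along the whole face, not merely at $v$ — and (ii) the new cells are still convex-ish enough (they need not be convex, as the statement warns, but the characteristic line field on their boundaries must still be controlled). Since $\xi$ is smooth and the triangulation is as fine as we please, the variation of $\xi$ across any single modified cell is tiny, so a compactness argument over the finitely many vertex stars gives a uniform choice of cap size for which everything goes through; this is exactly the content of \cite[Lemme 2.3]{Col}, to which I would appeal for the detailed estimates.
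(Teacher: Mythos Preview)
The paper does not actually prove this lemma: it is stated with a citation to \cite[Lemme 2.3]{Col}, accompanied only by a one-sentence description (``by cleverly adding and removing small tetrahedra from the simplices in the triangulation obtained by jiggling'') and a schematic figure illustrating the cap-transfer. Your sketch is consistent with that description and with Colin's original argument, so there is nothing to compare against here beyond noting that you have supplied more detail than the paper itself does.
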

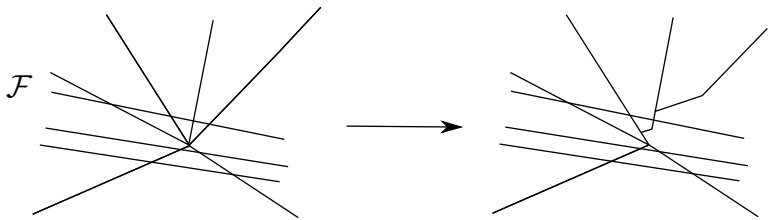
\begin{figure}[h]
\psset{xunit=.5pt,yunit=.5pt,runit=.5pt}
 \rput(-10,100){$\mathcal{F}$}  
\begin{pspicture}(555.76000664,160.00348789)
{
\newrgbcolor{curcolor}{0 0 0}
\pscustom[linewidth=1,linecolor=curcolor]
{
\newpath
\moveto(348.05739,3.72629789)
\lineto(466.30682,55.41227789)
}
}
{
\newrgbcolor{curcolor}{0 0 0}
\pscustom[linewidth=1,linecolor=curcolor]
{
\newpath
\moveto(465.59254,56.12657789)
\lineto(548.97323,1.13167789)
}
}
{
\newrgbcolor{curcolor}{0 0 0}
\pscustom[linewidth=1,linecolor=curcolor]
{
\newpath
\moveto(362.02111,96.12657789)
\lineto(538.44969,60.41228789)
}
}
{
\newrgbcolor{curcolor}{0 0 0}
\pscustom[linewidth=1,linecolor=curcolor]
{
\newpath
\moveto(357.7354,68.98370789)
\lineto(541.30683,39.69799789)
}
}
{
\newrgbcolor{curcolor}{0 0 0}
\pscustom[linewidth=1,linecolor=curcolor]
{
\newpath
\moveto(353.44969,56.12657789)
\lineto(534.87826,28.26942789)
}
}
{
\newrgbcolor{curcolor}{0 0 0}
\pscustom[linewidth=1,linecolor=curcolor]
{
\newpath
\moveto(348.05739,3.72629789)
\lineto(466.30682,55.41227789)
}
}
{
\newrgbcolor{curcolor}{0 0 0}
\pscustom[linewidth=1,linecolor=curcolor]
{
\newpath
\moveto(0.20025,3.01200789)
\lineto(118.44968,54.69798789)
}
}
{
\newrgbcolor{curcolor}{0 0 0}
\pscustom[linewidth=1,linecolor=curcolor]
{
\newpath
\moveto(118.99742,54.71642789)
\lineto(55.95854,154.10012789)
}
}
{
\newrgbcolor{curcolor}{0 0 0}
\pscustom[linewidth=1,linecolor=curcolor]
{
\newpath
\moveto(136.73414,149.96610789)
\lineto(117.82803,53.59666789)
}
}
{
\newrgbcolor{curcolor}{0 0 0}
\pscustom[linewidth=1,linecolor=curcolor]
{
\newpath
\moveto(118.19715,54.95052789)
\curveto(218.88788,158.90192789)(186.35647,126.13392789)(218.13026,159.65954789)
}
}
{
\newrgbcolor{curcolor}{0 0 0}
\pscustom[linewidth=1,linecolor=curcolor]
{
\newpath
\moveto(117.7354,55.41228789)
\lineto(201.11606,0.41738789)
}
}
{
\newrgbcolor{curcolor}{0 0 0}
\pscustom[linewidth=1,linecolor=curcolor]
{
\newpath
\moveto(14.16397,95.41228789)
\lineto(190.59256,59.69799789)
}
}
{
\newrgbcolor{curcolor}{0 0 0}
\pscustom[linewidth=1,linecolor=curcolor]
{
\newpath
\moveto(9.87826,68.26941789)
\lineto(193.44966,38.98370789)
}
}
{
\newrgbcolor{curcolor}{0 0 0}
\pscustom[linewidth=1,linecolor=curcolor]
{
\newpath
\moveto(5.59255,55.41228789)
\lineto(187.02116,27.55513789)
}
}
{
\newrgbcolor{curcolor}{0 0 0}
\pscustom[linewidth=1,linecolor=curcolor]
{
\newpath
\moveto(0.20025,3.01200789)
\lineto(118.44968,54.69798789)
}
}
{
\newrgbcolor{curcolor}{0 0 0}
\pscustom[linewidth=1,linecolor=curcolor]
{
\newpath
\moveto(237.7354,69.69799789)
\lineto(324.16397,68.98371789)
}
}
{
\newrgbcolor{curcolor}{0 0 0}
\pscustom[linestyle=none,fillstyle=solid,fillcolor=curcolor]
{
\newpath
\moveto(314.16431148,69.06635904)
\lineto(310.13139162,65.09955209)
\lineto(324.16397,68.98371789)
\lineto(310.19750454,73.09927891)
\lineto(314.16431148,69.06635904)
\closepath
}
}
{
\newrgbcolor{curcolor}{0 0 0}
\pscustom[linewidth=1,linecolor=curcolor]
{
\newpath
\moveto(314.16431148,69.06635904)
\lineto(310.13139162,65.09955209)
\lineto(324.16397,68.98371789)
\lineto(310.19750454,73.09927891)
\lineto(314.16431148,69.06635904)
\closepath
}
}
{
\newrgbcolor{curcolor}{0 0 0}
\pscustom[linewidth=1,linecolor=curcolor]
{
\newpath
\moveto(465.84262,55.73347789)
\lineto(361.5575,110.28285789)
}
}
{
\newrgbcolor{curcolor}{0 0 0}
\pscustom[linewidth=1,linecolor=curcolor]
{
\newpath
\moveto(118.19715,54.95052789)
\curveto(218.88788,158.90192789)(186.35647,126.13392789)(218.13026,159.65954789)
}
}
{
\newrgbcolor{curcolor}{0 0 0}
\pscustom[linewidth=1,linecolor=curcolor]
{
\newpath
\moveto(555.98346,143.66217789)
\curveto(541.93873,129.17205789)(526.41861,113.09387789)(506.76858,92.80767789)
\lineto(470.71429,80.71429789)
}
}
{
\newrgbcolor{curcolor}{0 0 0}
\pscustom[linewidth=1,linecolor=curcolor]
{
\newpath
\moveto(118.99742,54.71642789)
\lineto(55.95854,154.10012789)
}
}
{
\newrgbcolor{curcolor}{0 0 0}
\pscustom[linewidth=1,linecolor=curcolor]
{
\newpath
\moveto(466.85456,54.71642789)
\lineto(403.81568,154.10012789)
}
}
{
\newrgbcolor{curcolor}{0 0 0}
\pscustom[linewidth=1,linecolor=curcolor]
{
\newpath
\moveto(117.84262,55.73347789)
\lineto(13.5575,110.28285789)
}
}
{
\newrgbcolor{curcolor}{0 0 0}
\pscustom[linewidth=1,linecolor=curcolor]
{
\newpath
\moveto(484.73414,151.96610789)
\lineto(468.6283,67.44118789)
\lineto(460.71429,64.99999789)
}
}
\end{pspicture} \caption{A schematic picture illustrating Colin's procedure to ensure that each vertex is supporting for precisely one polyhedron.}\label{Colin_poly}
\end{figure}
At some points it will be convenient to have polyhedral decompositions that are in general position with respect to a line field say $Y$. In this case general position implies that $Y$ is transverse to all faces and is nowhere tangent to edges of any polyhedron $P$. The boundary of $P$ then decomposes into two discs $D_{in} \cup D_{out}$ that meet along a waist circle contained in the $1$-skeleton, so that each flow line of $Y$ points into $P$ on the interior of $D_{in}$, out of $P$ on the interior of $D_{out}$ and intersects $P$ in a point along $D_{in} \cap D_{out}$. 

\begin{rem}\label{tangent_conf}
Assume that $\xi$ is a $C^0$-confoliation in the tangential sense (Definition \ref{tangential_conf}) with respect to $Y$ smooth near the $1$-skeleton and assume that a polyhedral decomposition is in general position with respect to both $\xi$ and $Y$. If for each polyhedron the characteristic foliation on $\partial P$ is uniquely integrable, then the holonomy is non-positive and it is strictly negative for orbits passing through smooth contact regions. The reason being that we can think of $\xi$ as being tangent to a foliation by discs tangent to $Y$ on the front face of $P$ and negatively transverse to the back face, where and back and front faces correspond to those points of $\partial P$ where the vector field $Y$ points in, respectively out of $\partial P$.
\end{rem}

\subsection{Modifying plane fields to become contact near the $2$-skeleton}\label{modify_plane} 
For the moment we assume that $\xi$ is a smooth $2$-plane field in general position with respect to some polyhedral decomposition of $M$ as in Lemma \ref{lem_support}. We now describe explicitly how to modify such a plane field to a contact structure near the $2$-skeleton of any (polyhedral) subcomplex $K \subseteq M$ in a $C^0$-small fashion following Colin (cf.\ \cite{Col} Lemme 3.3). 

For each vertex $v$ choose a disc $D_v$ containing $v$ that is transverse to $\xi$ and choose a vector field $X_v$ that is tangent to $\xi$ and also transverse to $D_v$. This then gives a small neighbourhood $D_v \times [-1,1]$ containing $v$ in its interior so that the interval fibers are tangent to $\xi$. For example we can take $X_v$ to be the normalised vector field given by intersecting $\xi$ with a foliation by discs that is transverse to $D_v$ and say tangent to some normal line field. 

Next for each edge $e$ choose a thin strip $S_e = e \times [-\epsilon,\epsilon]$ containing $e$ that is transverse to $\xi$ and to $X_v$ near each vertex. Then extend $X_v$ to a vector field $X_e$ transverse to $S_e$ and tangent to $\xi$ which agrees with $X_v$ near the end points of $e$. This gives a small neighbourhood $S_e \times [-1,1]$ of the edge $e$ so that the intervals $pt \times [-1,1]$ are tangent to $\xi$. Finally for each face $f$ choose a vector field $X_f$ transverse to $f$ that is tangent to $\xi$, agrees with $X_e$ near edges and with $X_v$ near vertices. Flowing along this vector field then again gives a product neighbourhoood of $f$ so that the interval fibers are tangent to $\xi$. We next detail how one defines the desired plane field inductively over each skeleton.

\subsection*{Near the 0-skeleton:} We extend the plane field on $D_v \times [-1,1]$ by first taking the intersection with $D_v = D_v \times \{0\}$ and then taking a plane field given by twisting along the intervals tangent to $X_v$. Call this extension near each vertex $\hat{\xi}_0$. That is we consider coordinates $(x,y,z)$ so that $\frac{\partial}{\partial y}$ corresponds to $X_v$. Then 
$$\xi = Ker(dz-f(x,y,z)dx)$$
and we set $\hat{\xi}_0 = Ker(dz-\hat{f}(x,y,z)dx)$, where $\hat{f}(x,0,z)= f(x,0,z)$ and $\frac{\partial \hat{f}}{\partial y} >0$. 

\subsection*{Near the 1-skeleton:} We next extend near each strip $S_e$ by first intersecting $\xi$ with $S_e$ to obtain the characteristic foliation $\xi(S_e)$ and then twisting along the intervals tangent to $X_e$ in such a way that the resulting plane field $\hat{\xi}_1$ extends $\hat{\xi}_0$. By making the neighbourhoods of $S_e$ sufficiently small we can still assume that the resulting plane field is in general position.

\subsection*{Near the 2-skeleton:}  We finally extend $\hat{\xi}_1$ to a neighbourhood of $f$ by twisting along $X_f$ in such a way that the resulting plane field $\hat{\xi}_2$ is contact on some neighbourhood of the $2$-skeleton. 

\begin{rem}\label{close_plane_field}
Note that the resulting plane field can be assumed to be $\epsilon$-$C^0$-close to $\xi$ on (and hence near) the $2$-skeleton. Also for polyhedral decompositions in the sense of Colin, neighbourhoods of supporting vertices may meet the interiors of edges. This means that the modifications over the $1$-skeleton must be made relative to neighbourhoods of certain subsets, but this is automatically achieved by the construction above.
\end{rem}
\begin{rem}\label{polar_mod}
In the first step of the construction above, there is some freedom in defining $\hat{\xi}_0$. In particular, if $N_v = D^2 \times [-1,1]$ is a small neighbourhood of a supporting vertex $v$ so that the discs $D^2 \times \{pt\}$ are tangent to $\xi$ (in particular $\xi$ is integrable here), then we can assume that $\hat{\xi}_0 = Ker(dz -\delta \thinspace r^2 d\theta)$ where $(r,\theta)$ are polar coordinates on $D^2$ and $\delta > 0$ is small. We shall call the ordinary modification \emph{tangential} and the latter \emph{polar} near vertices. Note that the polar modification is really a tangential modification taken with respect to a particular choice of vector field so that the resulting plane field has a certain form relative to a fixed foliation near $v$.
\end{rem}

\noindent We record the following relative version of the above construction for later use: 
\begin{lem}\label{rem_relative_case}
Let $K$ be a finite collection of polyhedra in a polyhedral decomposition which is in general position with respect to a plane field $\xi$ and let $L \subseteq K$ be a subcomplex. Suppose that $\xi$ is a smooth plane field defined on an open neighbourhood of $K$. Suppose further that on some open neighbourhood $\mathcal{O}p(L)$ the plane field $\xi$ is contact. Then there is a contact structure $\hat{\xi}$ defined on $U_\epsilon = \mathcal{O}p(K^{(2)} \cup L)$ that is $\epsilon$-$C^0$-close to $\xi$ and agrees with $\xi$ on $V_{\epsilon} = K^{(2)} \setminus \mathcal{O}p(K^{(1)}) \cup \mathcal{O}p(L)$.

If in addition the restriction of $\xi$ to a given (fixed) neighbourhood $ \mathcal{O}p(K^{(1)})$ is a confoliation, then we can assume that $\hat{\xi}$ is defined on some slightly smaller $\mathcal{O}p'(K^{(1)}) \subset U_{\epsilon}$ independent of $\epsilon$.
\end{lem}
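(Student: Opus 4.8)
\textbf{Proof plan for Lemma \ref{rem_relative_case}.}

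The strategy is to run the three-step inductive construction from Section \ref{modify_plane} (near the $0$-, $1$- and $2$-skeleton) but performed \emph{relative to $L$}, exploiting that the construction near a skeleton only modifies $\xi$ in a small neighbourhood of that skeleton and can be made to vanish where $\xi$ is already contact. First I would set up the auxiliary data: for every vertex $v$ a transverse disc $D_v$ and a $\xi$-tangent vector field $X_v$ transverse to $D_v$, for every edge $e$ a strip $S_e$ with a $\xi$-tangent field $X_e$ extending the $X_v$, and for every face $f$ a $\xi$-tangent field $X_f$ extending $X_e$; all of this exactly as in Section \ref{modify_plane}. The only extra requirement is that over $\mathcal{O}p(L)$ we choose these vector fields and neighbourhoods so that the subsequent twisting is the zero modification there — this is possible because $\xi$ is already contact on $\mathcal{O}p(L)$, so the inductive ``make it contact'' step has nothing to do and we may take $\hat f = f$ on that region. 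Since $\xi$ is smooth on $\mathcal{O}p(K)$ and general position is $C^0$-open, all the choices can be made so that the perturbed plane field stays in general position, and every modification is supported in an arbitrarily small neighbourhood of $K^{(2)}$; hence $\hat\xi$ agrees with $\xi$ on $V_\epsilon = K^{(2)}\setminus\mathcal{O}p(K^{(1)}) \cup \mathcal{O}p(L)$ and is $\epsilon$-$C^0$-close to $\xi$ by Remark \ref{close_plane_field}.

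The induction itself is: over the $0$-skeleton twist along $X_v$ (tangential or polar as in Remark \ref{polar_mod}) to get $\hat\xi_0$ with $\partial\hat f/\partial y>0$, matching $\xi$ on $\mathcal{O}p(L)$; over the $1$-skeleton intersect with each strip $S_e$, twist along $X_e$ so the result $\hat\xi_1$ extends $\hat\xi_0$ and is a confoliation near $K^{(1)}$; over the $2$-skeleton twist along $X_f$ to obtain $\hat\xi_2=:\hat\xi$ which is contact on a neighbourhood of $K^{(2)}$. On $\mathcal{O}p(L)$, since we started contact, each stage is the identity modification, so $\hat\xi=\xi$ there. This produces a contact structure on $U_\epsilon = \mathcal{O}p(K^{(2)}\cup L)$ with the stated properties.

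For the final clause, suppose $\xi$ is already a confoliation on a \emph{fixed} neighbourhood $\mathcal{O}p(K^{(1)})$. Then the only reason the output neighbourhood $U_\epsilon$ shrinks with $\epsilon$ is the $0$- and $1$-skeleton steps, where one must twist within increasingly thin neighbourhoods to keep things $C^0$-close; but a confoliation already has the correct (non-positive, and where it matters genuinely negative) holonomy behaviour built in, so one does not need to twist there at all — one only needs the $2$-skeleton step, which can be carried out on a fixed neighbourhood of $K^{(2)}$ while changing $\xi$ by an amount one controls by shrinking that neighbourhood. Concretely, one keeps $\hat\xi = \xi$ on a fixed $\mathcal{O}p'(K^{(1)}) \subset \mathcal{O}p(K^{(1)})$ and only performs the face-twisting on the complementary part of $K^{(2)}$, using that confoliations have non-negative-strictly-positive holonomy on the relevant boundary arcs (cf.\ the discussion after Definition \ref{def_confol} and Remark \ref{hol_neg}) to guarantee the twisted field is still a confoliation and can be pushed to contact. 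The main obstacle I anticipate is precisely this last point: verifying that performing the face-modification only on $K^{(2)}\setminus\mathcal{O}p'(K^{(1)})$, while leaving $\xi$ untouched near $K^{(1)}$, still yields a \emph{globally defined smooth} contact structure near $K^{(2)}$ — i.e.\ that the twisting near faces can be tapered to zero approaching the edges without destroying contactness there. This is exactly the kind of careful interpolation already implicit in Colin's argument, and it works because the confoliation hypothesis near $K^{(1)}$ supplies the needed strict inequality on a region one can taper into; but it is the step that requires genuine care rather than bookkeeping.
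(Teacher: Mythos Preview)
Your approach is essentially the same as the paper's: run the three-step skeleton construction of Section~\ref{modify_plane} relative to $L$, using that the vector fields $X_v,X_e,X_f$ are tangent to $\xi$ so the twisting can be taken trivial on $\mathcal{O}p(L)$. This part is fine and matches the paper exactly.

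For the final clause, however, your proposed resolution of your own ``main obstacle'' is not quite right, and the paper handles it differently. You suggest leaving $\xi$ untouched on $\mathcal{O}p'(K^{(1)})$ and only performing the face-twisting on the complement, tapering it to zero near edges. But if $\xi$ is merely a confoliation (possibly integrable) on $\mathcal{O}p(K^{(1)})$, then tapering the twist to zero there gives you back a \emph{confoliation}, not a contact structure --- so you have not actually produced a contact $\hat\xi$ on the fixed neighbourhood. Your appeal to ``strict inequality on a region one can taper into'' does not help: there need be no such strict inequality anywhere in $\mathcal{O}p(K^{(1)})$.

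The paper instead argues in two steps. First, since $X_v$, $X_e$ and their flows are defined on a fixed $W_1' \subset \mathcal{O}p(K^{(1)})$, one performs the construction to produce an $\epsilon$-close \emph{confoliation} $\hat\xi$ on this fixed neighbourhood (agreeing with $\xi$ on a slightly smaller one). The point is that the confoliation hypothesis $\partial f/\partial y \ge 0$ means one can make the twist $\epsilon$-small without shrinking the $y$-interval; this is exactly what fails for an arbitrary plane field. Second, this confoliation --- now defined on a domain independent of $\epsilon$ --- is deformed to a genuine contact structure by a further $\epsilon$-small perturbation (again using $\partial f/\partial y \ge 0$ to add a small strictly monotone term on the fixed domain). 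Separating ``confoliation on a fixed domain'' from ``deform to contact'' is what resolves the interpolation issue you flagged.
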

\begin{proof}
Recall that the vector fields $X_v,X_e,X_f$ constructed above are tangent to $\xi$ near the $2$-skeleton of $L$ so that the extension $\hat{\xi}$ can be chosen to agree with $\xi$ on $\mathcal{O}p'(L)\subseteq \mathcal{O}p(L)$. We can extend these plane fields over the interiors of polyhedra in $L$ by simply taking $\hat{\xi} = \xi$. Note that by construction for any polyhedron $P$ we can assume that $\hat{\xi}$ agrees with $\xi$ on $\partial P \setminus W_1$ for a small (but \emph{a priori} fixed) neighbourhood $W_1=\mathcal{O}p(K^{(1)})$. 

Finally under the assumption that $\xi$ is a confoliation on $W_1$ we can first make an $\epsilon$-close extension $\hat{\xi}$ which is a confoliation so that $\hat{\xi} = \xi$ on a slightly smaller neighbourhood of $K^{(1)}$ by assuming that both $X_v$ and $X_e$ and their flows are defined on $W'_1 \subset W_1$. This confoliation can then be deformed to a contact structure that is say $2\epsilon$-close to $\xi$.
\end{proof}
\begin{rem}\label{parametric_version}
The above lemma also holds parametrically: if $(\xi_t)_{t \in [0,\delta]}$ is a smooth family of plane fields so that $(\xi_t)_{t \in [0,\epsilon]}$ is contact on $\mathcal{O}p(L)$, then there is a smooth family of contact structures $(\hat{\xi}_t)_{t \in [0,\delta]}$ that is $\epsilon$-close to $(\xi_t)_{t \in [0,\delta]}$ and such that $\hat{\xi}_t = \xi_t $ on $K^{(2)} \setminus \mathcal{O}p(K^{(1)}) \cup \mathcal{O}p(L)$. 
\end{rem}

\subsection{Extending contact structures to the interior of a polyhedron}
At a certain point in our approximation scheme we will need to extend contact structures over $3$-cells in a $C^0$-controlled manner (cf.\ Lemma \ref{extend_close}). For this we first collect some preliminary lemmas:
\begin{lem}\label{extend_C^0}
Let $\mathcal{F}$ be a $C^0$-foliation on $U = D^2 \times [0,1]$ that agrees with the product foliation near the horizontal boundary $\partial^{h} U=D^2 \times \{0,1\}$ and is transverse to the interval fibers $\{pt \} \times [0,1]$. Suppose that $\mathcal{G}_n$ is a sequence of foliations by circles on the vertical boundary $\partial^{v} U = \partial D^2 \times [0,1]$ that agrees with the product foliation near the horizontal boundary and converges to the characteristic foliation $\mathcal{G}_0 = \mathcal{F}(\partial^v U) $ in the $C^0_{Fol}$-norm. Then there is a sequence of foliations $\mathcal{F}_n$ converging to $\mathcal{F}$ in the $C^0_{Fol}$-norm so that the characteristic foliations $\mathcal{F}_n(\partial^v U )$ are $\epsilon_n$-close to $\mathcal{G}_n$ in the $C^0_{Fol}$-norm for any sequence $(\epsilon_n)_{n\in\mathbb{N}}$ of positive numbers with $\epsilon_n \searrow 0$.

\end{lem}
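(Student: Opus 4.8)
The goal is to produce foliations $\mathcal{F}_n$ on $U = D^2 \times [0,1]$ converging to $\mathcal{F}$ whose characteristic foliations on the vertical boundary approximate a prescribed sequence $\mathcal{G}_n$ converging to $\mathcal{F}(\partial^v U)$. The natural strategy is to work in a collar $\partial D^2 \times [-1,0] \times [0,1]$ of the vertical boundary (using a product structure adapted to $\mathcal{F}$, i.e. flowing along a smooth vector field that is $C^0$-close to and tangent to $\mathcal{F}$ near $\partial^h U$, exactly as in the construction of nice coordinates in Section \ref{nice_section}) and interpolate \emph{radially}: near the inner boundary of the collar the foliation should agree with $\mathcal{F}$, and as one moves toward $\partial^v U$ it should bend to match $\mathcal{G}_n$.

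First I would fix a collar neighbourhood $C \cong \partial D^2 \times [-1,0] \times [0,1]$ of $\partial^v U$ inside $U$, with coordinates $(x,s,z)$ where $s=0$ is the vertical boundary and the $z$-fibres are transverse to $\mathcal{F}$; choose it so that $\mathcal{F}$ is a product near $\partial^h U$, and so that the product structure is genuinely smooth there. Write $\mathcal{F}$ on $C$ as graphs $z \mapsto f_z(x,s)$ with $\frac{\partial f_z}{\partial x}, \frac{\partial f_z}{\partial s}$ continuous, monotone in $z$, equal to $z + \mathrm{const}$ near $z = 0,1$. The characteristic foliation $\mathcal{F}(\partial^v U)$ is then the graph system $z \mapsto f_z(x,0)$ (functions of $x$ alone with $z$ as parameter). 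Each $\mathcal{G}_n$, being a circle foliation on $\partial^v U$ agreeing with the product near $\partial^h U$, can likewise be written (after a $C^0$-small reparametrisation absorbed into the statement via the $\epsilon_n$) as a graph system $z \mapsto g^n_z(x)$ with $g^n_z \to f_z(\cdot,0)$ in $C^0_{Fol}$ and $g^n_z = z + \mathrm{const}$ near $z = 0,1$. Now define, for a fixed smooth cutoff $\rho\co[-1,0]\to[0,1]$ with $\rho \equiv 0$ near $s=-1$ and $\rho \equiv 1$ near $s=0$,
$$h^n_z(x,s) = (1-\rho(s))\, f_z(x,s) + \rho(s)\bigl(f_z(x,0) + (g^n_z(x) - f_z(x,0))\bigr),$$
and extend by $\mathcal{F}$ outside $C$; this is precisely the Cutting-off Lemma (Lemma \ref{loc_principle}) construction. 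Monotonicity in $z$ is preserved because it is preserved by convex combinations and the added term $g^n_z - f_z(\cdot,0)$ can be taken arbitrarily $C^0$-small in $z$, hence absorbable into a small $\delta z$ correction as in Lemma \ref{smooth} if needed; I would verify that $\partial h^n_z/\partial z > 0$ directly from $g^n_z \to f_z(\cdot,0)$ in the \emph{foliated} (not $C^1$) norm, using that the $z$-derivatives of $f_z$ are bounded below away from the vertical boundary and that near $\partial^h U$ everything is a genuine product.

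The convergence claims are then bookkeeping: on the region $s \le -1+\eta$ the new foliation equals $\mathcal{F}$; on $-1+\eta \le s \le 0$ the $C^0_{Fol}$-distance from $\mathcal{F}$ is controlled by $\|g^n_z - f_z(\cdot,0)\|_{C^0_{Fol}}$ plus a term coming from $\rho'(s)$ times that same quantity (the $s$-derivative of the interpolation), so choosing the collar thin relative to $n$ — or rather, since the collar is fixed, simply observing $\|g^n_z - f_z(\cdot,0)\|_{C^0_{Fol}} \to 0$ — gives $\mathcal{F}_n \to \mathcal{F}$ in $C^0_{Fol}$. And by construction $\mathcal{F}_n(\partial^v U)$ is exactly the graph system $z\mapsto g^n_z(x)$, i.e. it agrees with $\mathcal{G}_n$ up to the reparametrisation we absorbed, hence is $\epsilon_n$-close for any preassigned $\epsilon_n \searrow 0$ (one uses the slack to handle that the $\mathcal{G}_n$ were only given as circle foliations, not literally as graphs with the chosen parameter $z$).

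\textbf{The main obstacle.} The delicate point is maintaining that the interpolant $h^n_z$ genuinely defines a foliation, i.e. strict monotonicity in $z$, while only assuming $C^0_{Fol}$-convergence of $\mathcal{G}_n$ rather than $C^1$-convergence — the foliated norm controls the tangent \emph{planes} but not the $z$-derivative, so one must be careful that bending toward $\mathcal{G}_n$ near $s=0$ does not destroy the fibered-over-$z$ structure. This is handled exactly as in the proofs of Lemma \ref{smooth} and the Cutting-off Lemma: monotonicity in $z$ is an \emph{inequality} $f_{z_1} < f_{z_2}$ that is preserved under convex combinations, and the only subtlety is the transition region where $\rho' \ne 0$; there one notes that $g^n_z - f_z(\cdot,0) \to 0$ uniformly in $z$ (not just its plane-field avatar, since these are honest functions on $\partial^v U$), so the perturbation of $\partial h^n_z/\partial z$ is $o(1)$ and the sign is preserved for $n$ large. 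A secondary point, purely technical, is arranging that $\mathcal{G}_n$ — a priori an abstract circle foliation on the annulus $\partial^v U$ — is presented compatibly with the $z$-parametrisation coming from $\mathcal{F}$; since both agree with the product foliation near $\partial^h U$, the identification differs from the identity by a homeomorphism of $[0,1]$-worth of parameters that is $C^0$-close to the identity and trivial near the ends, and the resulting discrepancy is exactly what the freedom in the $\epsilon_n$ is there to absorb.
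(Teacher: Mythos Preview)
There is a genuine gap in the interpolation step. Your formula simplifies to $h^n_z(x,s) = (1-\rho(s))\,f_z(x,s) + \rho(s)\,g^n_z(x)$, and as $n\to\infty$ this converges not to $f_z(x,s)$ but to $(1-\rho(s))f_z(x,s) + \rho(s)f_z(x,0)$ --- which differs from $\mathcal{F}$ wherever the leaves are tilted in the $s$-direction. Since your collar is built by flowing along a smooth vector field that is only $C^0$-close to $T\mathcal{F}$ (not tangent to it, which is impossible for a genuinely $C^0$ foliation), this tilt is unavoidable and the Cutting-off Lemma does not apply: its hypothesis is that the approximants converge to $f_z$ on the whole chart, whereas your $s$-constant extension $g^n_z(x)$ converges to $f_z(x,0)$. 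If instead you meant the additive version $h^n_z = f_z(x,s) + \rho(s)\bigl(g^n_z(x)-f_z(x,0)\bigr)$ (which \emph{does} converge), monotonicity in $z$ can fail: for a $C^0$-foliation there is no comparison between $f_{z_2}(x,s)-f_{z_1}(x,s)$ and $f_{z_2}(x,0)-f_{z_1}(x,0)$, so the correction term --- bounded only by $2\|g^n-f(\cdot,0)\|_\infty$ independently of $|z_2-z_1|$ --- can swamp the first term for $z_1,z_2$ close. Your convex-combination argument applies only to the first formula, and a $\delta z$ correction does not rescue the second since the deficit is not merely a vanishing $z$-derivative.

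The paper resolves both issues simultaneously by \emph{discretising in $z$}. One fixes finitely many levels $k/N$ (with $N$ depending on $n$) and extends each boundary circle $g^n_{k/N}$ into the disc by flowing along a radial vector field tangent to the specific leaf $L_{k/N}$ of $\mathcal{F}$, then cutting off to agree with $f_{k/N}$ in the interior. This leafwise extension --- rather than your $s$-constant one --- is what makes each $\widehat f^{\,n}_{k/N}$ genuinely $C^0_{Fol}$-close to $f_{k/N}$. Monotonicity is then trivial: one only needs the finitely many strict inequalities $\widehat f^{\,n}_{k/N} < \widehat f^{\,n}_{(k+1)/N}$, which follow from $C^0$-closeness to the strictly ordered $f_{k/N}$. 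Interpolating in $z$ between these finitely many discs gives $\mathcal{F}_n$ close to $\mathcal{F}$ (since $\mathcal{F}$ itself is $1/n$-close to the interpolation of its own leaves at mesh $1/N$), and a small $\delta_n\rho(z)$ term fixes any residual weak monotonicity.
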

\begin{proof}
Suppose that the foliation $\mathcal{F}$ is given as the graphs of a $C^0$-family of smooth functions 
$$f_z(x,y)\co D^2 \longrightarrow \R $$
and that the circle foliations $\mathcal{G}_n$ are given by a family of functions $g^n_z(\theta)\co \partial D^2 \longrightarrow \R$ so that $g^0_z = f_z|_{\partial D^2}$. Now for each $n$ pick $N \gg 1$ so that for any $\eta < 1/N$
$$\|f_z(x,y) -f_{z + \eta}(x,y)\|_{C^0_{Fol}} <1/n \textrm{ and } \|g_z(\theta) - g_{z + \eta}(\theta)\|_{C^0_{Fol}}  < \epsilon_n.$$
Then for any $z \in [(k+1)/N,k/N]$ we have
$$\left\|f_z(x,y) - \left(\rho_{k}(z)f_{(k-1)/N}(x,y) +(1-\rho_{k}(z)) f_{k/N}(x,y)\right)\right\|_{C^0_{Fol}} <1/n$$
for any positive bump function $\rho_k$ with $\text{supp}(\rho_{k}) = [(k+1)/N,k/N]$. Now let $X_k$ be a radial vector field tangent to the graph of $f_{k/N}$, i.e.\ to the leaf $L_k$ of $\mathcal{F}$ through $z=k/N$. We extend this vector field slightly to a (possibly very) small open neighbourhood of $L_k$. Then for $n$ sufficiently large we can extend $g^n_{k/N}$ first to a map of the annulus $S^1 \times [1/2,1]$ using $X_k$ and then to a map of the the disc that agrees with $f_{k/N}$ away from the boundary by using a fixed cut-off function. For $n$ large the resulting smooth map $\widehat{f}^n_{k/N}$ is close to $f_{k/N}$ in the $C^0_{Fol}$-norm. We then simply use linear interpolation in the $z$ parameter to extend the maps $\widehat{f}^n_{k/N}$ to obtain a family of maps $$\widehat{f}^n_z(x,y)\co D^2 \longrightarrow \R. $$
Note that by taking $N \gg 1$ we can assume that the foliation $\mathcal{F}$ is smooth for $z \in [0,k/2N] \cup [1-1/2N,1] = I_0 \cup I_1$ and thus we can also assume that $\widehat{f}^n_z(x,y) = f^n_z(x,y) = f_z(x,y)$ for $z \in I_0 \cup I_1$. This family has the desired properties, except that the map 
$$F^n(x,y,z) = (x,y,\widehat{f}^n_z(x,y))$$
is only a smooth bijection and $\frac{\partial F^n}{\partial z}$ is in general only non-negative (but strictly positive near $z = 0,1$). This can be remedied by taking some bump function $\rho$ that is identically $1$ on $[1/2N,1-1/2N]$ and has support in the interior of $[0,1]$ and setting
$$\tilde{F}^n(x,y,z) = (x,y,\widehat{f}^n_z(x,y) + \delta_n \cdot\rho(z))$$
for $\delta_n$ sufficiently small. 
\end{proof}
\begin{lem}\label{polar}
Let $(\xi_t)_{t \in [0,\delta]}$ be a $C^0$-family of plane fields defined near the $2$-skeleton of a polyhedral decomposition $K$. Suppose that $\xi_t$ is in general position with respect to the polyhedral decomposition and let $v$ be a supporting vertex. Assume furthermore that the family is smooth on $U = \mathcal{O}p(v)$, is integrable for $t=0$ and contact for $t>0$. Then there is a family $(\eta_t)_{t \in [0,\delta]}$ on $\mathcal{O}p(K^{(2)})$ agreeing with $\xi_t$ away from $U \cap \mathcal{O}p(K^{(2)})$ such that $\eta_0 = \xi_0$ and each $\xi_t$  is polar on $U' \subset U$ for all $t>0$.
\end{lem}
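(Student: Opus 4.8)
The plan is to reduce to the non-parametric picture near a single supporting vertex and then interpolate between the given tangential modification $\xi_t$ and a polar model, keeping everything contact for $t>0$ and integrable at $t=0$. First I would recall from the construction in Section \ref{modify_plane} (see Remark \ref{polar_mod}) that near $v$ the plane field $\xi_t$ is the kernel of a $1$-form $dz - f_t(x,y,z)\,dx$ in coordinates $(x,y,z)$ adapted to the chosen vector field $X_v$, with $\xi_0$ integrable (so $f_0$ is independent of $y$, and after a preliminary diffeomorphism we may take $\xi_0$ tangent to the discs $D^2\times\{pt\}$, i.e.\ $f_0\equiv 0$) and $\partial f_t/\partial y>0$ for $t>0$ on $U$. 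Since $v$ is supporting for at most one polyhedron and the only intersection of $U$ with $K^{(2)}$ that matters is the union of (parts of) the three faces and edges through $v$, all modifications below can be carried out relative to $\mathcal{O}p(K^{(2)})\setminus U$.

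The key step is to choose, on a slightly smaller neighbourhood $U'=D^2\times[-1,1]\subset U$ (with polar coordinates $(r,\theta)$ on $D^2$), the polar form $\alpha_{pol}=dz-\delta\,r^2\,d\theta$ and to interpolate. Concretely, pick a cut-off function $\chi\co U\to[0,1]$ equal to $1$ on $U'$ and supported in $U$, and set
\[
\eta_t=\ker\!\bigl(dz-\bigl((1-\chi)\,f_t+\chi\,\delta_t\,r^2\bigr)\,d\theta'\bigr),
\]
where $d\theta'$ is written back in the $(x,y,z)$ trivialisation and $\delta_t\searrow 0$ as $t\to 0$ (so $\eta_0=\xi_0$ as required). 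Three things must be checked: (i) $\eta_t$ is a genuine plane field $C^0$-close to $\xi_t$, which holds because both $f_t$ and $\delta_t r^2$ are small and the interpolation only reshuffles the ``$dx$-coefficient''; (ii) $\eta_t$ is contact on $U'$ for $t>0$, which is the standard computation $\alpha_{pol}\wedge d\alpha_{pol}=2\delta_t\,r\,dz\wedge dr\wedge d\theta>0$ away from $r=0$, together with the observation that on the $z$-axis the original contact condition $\partial f_t/\partial y>0$ can be kept by not fully switching to the polar model in an arbitrarily thin core—more cleanly, one first replaces $\xi_t$ on $U$ by a tangential modification with respect to the \emph{radial} vector field $\partial_r$ (permissible since this is again just a tangential modification, as noted in Remark \ref{polar_mod}), after which the model literally becomes $dz-\delta_t r^2 d\theta$; and (iii) in the transition region $\{0<\chi<1\}$ the interpolated form is still contact, which follows because an interpolation of two functions that are each strictly monotone in the same ``twisting'' parameter is again strictly monotone (convexity of the contact condition, cf.\ the remark after Lemma \ref{transport}), provided the cut-off $\chi$ is taken to vary in the right direction; shrinking $U$ if necessary makes the error terms from $d\chi$ negligible.

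I expect the main obstacle to be point (iii): ensuring contactness survives on the collar $\{0<\chi<1\}$ while simultaneously matching $\xi_t$ (a tangential modification along $X_v$) to the polar model (a tangential modification along $\partial_r$) — the two modifications use genuinely different auxiliary vector fields, so one cannot simply linearly interpolate the defining functions and must instead interpolate the vector fields themselves (or the coordinates), exactly as in the proof of Lemma \ref{rem_relative_case}, and then twist. The cleanest route is: first apply Lemma \ref{rem_relative_case} (parametric version, Remark \ref{parametric_version}) to replace $\xi_t$ near $v$ by a contact structure built as a tangential modification with respect to a vector field $Y_t$ that agrees with $X_v$ outside $U$ and with $\partial_r$ on $U'$, keeping $\eta_0=\xi_0$; then on $U'$ perform the twist explicitly to reach the polar normal form $dz-\delta_t r^2 d\theta$. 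The parametric bookkeeping ($\eta_0=\xi_0$, smoothness in $t$, contact for $t>0$) is routine once this decomposition is in place, and $C^0$-closeness to $\xi_t$ on $\mathcal{O}p(K^{(2)})$ follows from Remark \ref{close_plane_field} applied at each stage.
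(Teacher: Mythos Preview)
Your overall strategy is right --- bring both $\xi_t$ and the polar model into a common ``tangential'' normal form and then interpolate using convexity of the monotonicity condition --- and you correctly locate the difficulty in the transition region (your point (iii)): the given $\xi_t$ is tangential with respect to $X_v$, the polar model with respect to $\partial_r$, and a naive cut-off between the two does not preserve contactness because the ``$y$-direction'' in which one needs monotonicity is not the same for the two forms. Your proposed fix (interpolate the auxiliary vector fields via the parametric version of Lemma~\ref{rem_relative_case}) is not quite what that lemma provides, and it leaves the same issue open one level down: you still have to explain why the plane field stays contact as the tangential direction itself rotates.

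The paper resolves this differently and more directly. Rather than changing the vector field, it fixes a smooth family $X_v(t)$ tangent to $\xi_t$ and applies a smooth family of isotopies $\Phi_t$ (with $\Phi_0=\mathrm{id}$, supported near $v$) to the \emph{polar model} $\hat\xi_t=\mathrm{Ker}(dz-\delta(t)\,r^2\,d\theta)$ so that $\Phi_t^*\hat\xi_t$ is also tangent to $X_v(t)$. In the resulting $t$-dependent coordinates $(x_t,y_t,z_t)$ both plane fields are kernels of $1$-forms $dz_t-a_t\,dx_t$ and $dz_t-b_t\,dx_t$ with $a_t,b_t$ strictly monotone in $y_t$ for $t>0$; one then interpolates using a partition of unity in the $(x_t,z_t)$-plane only, which manifestly preserves $y_t$-monotonicity by convexity. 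Pushing forward by $\Phi_t$ and extending by $\xi_t$ gives $\eta_t$. The key step you were missing is this isotopy: it puts both structures into the \emph{same} tangential frame, so the convexity argument applies without competing $y$-directions and without any need to control the $d\chi$-error terms you worried about.
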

\begin{proof}
Let $(x,y,z)$ be foliated coordinates on $U = [0,\epsilon]\times[0,1] \times [0,\epsilon] $ so that $\xi_ 0 = Ker(dz)$. We will want to assume that $U$ is very thin in the $(x,z)$-plane so that the discs $y = 0,1$ do not meet the $2$-skeleton. Let $X_v(t)$ be a smooth family of vector field tangent to $\xi_t$ on $U$. Define a smooth family of plane fields on $U$ by $\hat{\xi}_t= Ker(dz -\delta(t) \thinspace r^2 d\theta)$ for some smooth function with $\delta(t) > 0$ for $t>0$ and $\delta(0)=0$. We then straighten things out by a smooth family of isotopies $\Phi_t$ with $\Phi_0 = id$ with support near $v$ so that all plane fields are tangent $X_v(t)$. Thus in suitable coordinates $(x_t,y_t,z_t)$ (depending on $t$) we have
$$\xi_t = Ker(dz_t-a_t(x_t,y_t,z_t)dx_t) \quad , \quad \Phi_t^*\thinspace \hat{\xi}_t = Ker(dz_t-b_t(x_t,y_t,z_t)dx_t),$$
where the contact condition is that the functions $a_t,b_t$ are monotone in the $y$-direction. By using a partition of unity in the $(x,z)$-plane we can modify $(\xi_t)_{t\in[0,\delta]}$ to agree with $\Phi_t^*\thinspace \hat{\xi}_t$ on some (fixed) neighbourhood of $v$, since the condition of being monotone is convex. We then push forward under $\Phi_t$ and extend by $\xi_t$ to obtain $\eta_t$.
\end{proof}
The following technical result will allow us to modify plane fields to contact structures with $C^0$-control and is a slight sharpening of the corresponding results of Colin (\cite{Col} Lemme 4.1) and Vogel (\cite{Vog} Lemma 4.14) that is specifically adapted to our needs. 
\begin{lem}\label{extend_close}
Let $\mathcal{F}$ be a $C^0$-foliation and consider a polyhedral decomposition as in Lemma \ref{lem_support} with respect to which $\mathcal{F}$ is in general position. Now let $K \subseteq M$ be any collection of polyhedra and let $(\xi_t)_{t \in [0,\delta]}$ be a continuous family of $C^0$-confoliations on some neighbourhood of $K$ so that 
\begin{enumerate}
\item $\xi_0$ is tangent to $\mathcal{F}$; 
\item The family $\xi_t$ is smooth on some (fixed) neighbourhood $U_1$ of the $1$-skeleton;
\item Each $\xi_t$ is a contact structure on some (fixed) neighbourhood of each supporting vertex for all $t>0$;
\item For each polyhedron $P$ there are characteristic foliations tangent to the characteristic line field $\xi_t(\partial P)$ that vary continuously in $t$ with respect to the $C^0_{Fol}$-norm. 
\item All leaves of these characteristic foliations pass through a contact region contained in $\partial P \setminus V_1$, for some neighbourhood of the $1$-skeleton $V_1 \subseteq U_1$. 
\end{enumerate}
Then there is a contact structure on a neighbourhood of $K$ which is $\epsilon$-$C^0$-close to $\mathcal{F}$ for any $\epsilon >0$. 

Moreover, if the family $(\xi_t)_{t \in [0,\delta]}$ is smooth on an open neighbourhood of a subcomplex $L \subseteq K$ and contact (near $L$) for $t>0$, then we can assume that this contact structure agrees with $\xi_{t_0}$ near $L$ for some $0<t_0\ll 1$.
\end{lem}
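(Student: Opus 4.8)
The plan is to proceed in two stages: first fill in contact structures over the $3$-cells of $K$ in a purely topological (uncontrolled) way near the $1$-skeleton, then re-do the construction quantitatively so that the resulting contact structure stays $C^0$-close to $\mathcal{F}$. For the first stage, recall from the discussion following Lemma~\ref{lem_support} and from Remark~\ref{rem_hol} that, since each $\xi_t$ is a $C^0$-confoliation and since by hypothesis~(4) the characteristic line field on each $\partial P$ integrates to a foliation varying continuously in $t$, the holonomy of $\xi_t(\partial P)$ along a transversal joining the two supporting vertices of $P$ is well-defined and non-positive; by hypothesis~(5) every leaf passes through a contact region in $\partial P \setminus V_1$, so for $t>0$ this holonomy is in fact \emph{strictly} negative. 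This is precisely the condition under which a plane field defined near $\partial P$ extends to a positive contact structure over $\mathrm{int}(P)$, by the basic extension principle of Colin (cf.\ \cite{Col}, and \cite{Vog} Lemma~4.14). So for any fixed small $t_0>0$ we obtain a contact structure $\widehat{\xi}$ on a neighbourhood of $K$ agreeing with $\xi_{t_0}$ near $K^{(2)}$.

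For the second stage — the $C^0$-control — I would proceed polyhedron by polyhedron and localise the twisting near the supporting vertices. First apply Lemma~\ref{polar} to the family $(\xi_t)$ to arrange that near each supporting vertex $v$ the confoliation is \emph{polar}, i.e.\ of the form $Ker(dz - \delta(t)\,r^2\,d\theta)$ in foliated coordinates, with $\delta(t)\searrow 0$ as $t\searrow 0$; this localises all the non-integrability into an arbitrarily small neighbourhood of $v$ and keeps things $C^0$-close to $\mathcal{F}$ there. Away from $\mathcal{O}p(K^{(1)})$ the confoliations $\xi_t$ are already $C^0$-close to $\mathcal{F}$ and, by hypothesis, already contact through the contact regions of~(5). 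The remaining work is to extend over $\mathrm{int}(P)$: using Lemma~\ref{extend_C^0} one approximates $\mathcal{F}|_P$ by a sequence of $C^0$-foliations $\mathcal{F}_n \to \mathcal{F}$ whose characteristic foliations on $\partial P$ are made to match (up to $\epsilon_n$) the characteristic foliations $\xi_{t_n}(\partial P)$ for a sequence $t_n \searrow 0$; then one perturbs $\mathcal{F}_n$ to a contact structure in the interior using the strict negativity of the holonomy. The key estimate is that, because the twisting is polar and confined to thin neighbourhoods of the supporting vertices, and because the characteristic data on $\partial P$ is $C^0$-controlled, the resulting contact structure on $P$ is $C^0$-close to $\mathcal{F}_n$ and hence, for $n$ large, to $\mathcal{F}$. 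Doing this for every polyhedron and patching (the patching is unproblematic since everything agrees near $K^{(2)}$ with a fixed confoliation close to $\mathcal{F}$) yields a global contact structure $\epsilon$-$C^0$-close to $\mathcal{F}$.

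For the relative statement, suppose $(\xi_t)$ is smooth near a subcomplex $L \subseteq K$ and contact there for $t>0$. Then I would simply refrain from modifying $\xi_t$ near $L$: in the first-stage extension over $3$-cells of $L$ one takes $\widehat{\xi} = \xi_{t_0}$ directly (the cells are already filled with a contact structure), and in the second-stage quantitative construction one uses the relative version of the modification-near-the-$2$-skeleton construction (Lemma~\ref{rem_relative_case} and Remark~\ref{parametric_version}) to ensure all the polar modifications and interior extensions are performed relative to a fixed neighbourhood of $L$, leaving $\xi_{t_0}$ untouched there. Choosing $t_0>0$ small enough that $\xi_{t_0}$ is within $\epsilon/2$ of $\mathcal{F}$ away from $L$ gives the claim.

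\emph{Main obstacle.} The delicate point is step (5) combined with the $C^0$-control: one must extend the contact structure over $\mathrm{int}(P)$ while both respecting the prescribed characteristic foliation on $\partial P$ \emph{and} keeping the plane field $C^0$-close to $\mathcal{F}_n$. The strict negativity of the holonomy guarantees existence of \emph{some} contact extension, but controlling its $C^0$-distance to the foliation requires that the ``movie'' of characteristic foliations sweeping across $P$ stays close to the trivial (product) movie except in the thin polar regions near the supporting vertices — this is why the localisation via Lemma~\ref{polar} and the careful matching via Lemma~\ref{extend_C^0} are both essential, and why one cannot simply invoke Colin's extension lemma as a black box. Making the bookkeeping of these two approximations (in $n$ and in $t$) compatible across all polyhedra simultaneously is the technical heart of the argument.
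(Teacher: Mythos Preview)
Your overall architecture is right and you have identified the correct ingredients (Lemma~\ref{polar}, Lemma~\ref{extend_C^0}, Lemma~\ref{rem_relative_case}), but there is a genuine gap in how you invoke Lemma~\ref{extend_C^0}. You write that one approximates $\mathcal{F}|_P$ by foliations $\mathcal{F}_n$ whose characteristic foliations on $\partial P$ match the characteristic foliations $\xi_{t_n}(\partial P)$. But Lemma~\ref{extend_C^0} takes as input a sequence of foliations \emph{by circles} on the vertical boundary of $D^2\times[0,1]$, whereas $\xi_{t_n}(\partial P)$ is not a circle foliation at all: it spirals from one supporting vertex to the other with strictly negative holonomy. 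So you cannot feed $\xi_{t_n}(\partial P)$ into Lemma~\ref{extend_C^0}, and in any case the goal is not to make $\mathcal{F}_n(\partial P)$ coincide with $\xi_{t_n}(\partial P)$ but to make $\mathcal{F}_n$ \emph{transverse} to the contact structure along $\partial P'$, so that the Legendrian line field $\hat{\zeta}_{t_n}\cap T\mathcal{F}_n$ is defined and one can twist along it into the interior.

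The paper fills this gap with an explicit intermediate step you are missing: after smoothing $\xi_t$ to $\zeta_t$ and applying Lemma~\ref{rem_relative_case} to obtain $\hat{\zeta}_t$ which is genuinely contact near the whole $2$-skeleton (not only near $L$; you invoke this lemma only for the relative addendum, but it is needed already in the main construction), one builds on each $\partial^v P'$ a continuous family of \emph{circle} foliations $\mathcal{G}_t$ by adding a small positive drift $\delta(t)\partial_z$ to the characteristic vector field and then closing up the orbits on a short strip using the strict negativity of the holonomy. This $\mathcal{G}_t$ is transverse to $\hat{\zeta}_t$ for $t>0$ and agrees with $\mathcal{F}(\partial^v P')$ at $t=0$; it is \emph{this} family that is fed into Lemma~\ref{extend_C^0} to produce $\mathcal{F}_n$, and transversality of $\mathcal{F}_n$ to $\hat{\zeta}_{t_n}$ on the boundary is what makes the final twisting extension both possible and $C^0$-controlled. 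Without this step your argument does not go through.
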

\begin{rem}
Note that the condition (5) above implies that the holonomy of $\xi_t(\partial P)$ is negative away from supporting vertices for all $t > 0$. Also although the holonomy depends \emph{a priori} on the choice of characteristic foliation, by abuse we shall refer to $\xi_t(\partial P)$ simply as the characteristic foliation on the boundary of a polyhedron $ P$.
\end{rem}
\begin{proof}
Let $P$ be any polyhedron of $K$ and choose a smoothing $P_0 \subseteq P$ that agrees with $P$ away from $V_1$. Fix small foliated (with respect to $\xi_0$) neighbourhoods $N_P^{\pm} \cong D^2 \times [-\epsilon,\epsilon] \subseteq U_1$ of each supporting vertex so that the discs are tangent to $\xi_0$, the interval $0 \times [-\epsilon,\epsilon]$ points into respectively out of $P$ and the supporting vertices correspond to $(0,0)$. We attach $N_P^{\pm}$ to $P_0$ and smoothen to obtain a subset $P'$ which is diffeomorphic to $ D^2 \times [0,1]$ (as a smooth manifold with corners). We can assume that the discs are tangent to $\xi_0$ near the top and bottom of $P'$. We can also assume that the tangent distribution to the discs $D^2 \times pt$ themselves is $C^0$-close to $T \mathcal{F}$ and that the intervals $ pt \times [0,1]$ are transverse to $\xi_t$. Thus we can define the holonomy of the characteristic foliation on the vertical boundary $\partial^v P' = S^1 \times [0,1]$. With respect to the product coordinates on $P'$ the foliation $\mathcal{F}$ is given by the graphs of functions $f_z(r,\theta) \co D^2 \longrightarrow \R$, where $(r,\theta)$ are polar coordinates on $D^2$.

Now the condition that $\xi_t$ is a $C^0$-confoliation and the fact that every leaf of $\xi_t(\partial P)$ passes through the contact region on $\partial P \setminus V_1$ for $t>0$ implies that the same holds for $\xi_t(\partial^v P')$. It follows that the holonomy $h^t_P$ of $\xi_t(\partial^v P')$ is strictly negative and $h^t_P < -\delta_P(t)<0$ for some continuous function $\delta_P(t)$ with $\delta_P(0) = 0$. Here we may have to extend $P'$ a little to make sure that the holonomy is well-defined. 

We can then approximate $\xi_t$ by a $C^0$-family of plane fields $\zeta_t$ that are smooth for $t>0$, satisfy $\zeta_0 = \xi_0$ and agree with $\xi_t $ on $V_1 \cup \mathcal{O}p(L)$. To do this in such a way that the holonomy is controlled some additional care is needed. First for a face $F$ consider the induced foliation $\xi_t(F)$, which is smooth on $\mathcal{O}p(\partial F)$. By general position each of the leaves is an arc and these arcs can be parametrised as a continuous family of smooth arcs. One can then smoothen as in Lemma \ref{smooth} to obtain a smooth line field on $F \setminus \mathcal{O}p(\partial F)$ and using a bump function this gives a $C^0$-approximation on $F$, which can be extended to a small neighbourhood by flowing along a vector field that is sufficiently close to $\xi_t$. Note that this approximation can be done parametrically. By making this approximation sufficiently close (parametrically) to $\xi_t$ we can continue to assume the holonomy of $\zeta_t(\partial^v P')$ is negative for $t>0$. As our smoothing was done via foliations, rather than merely smoothing plane fields we can assume that the family $\zeta_t(\partial^v P')$ is continuous in the $C_{Fol}^0$-norm and that $\zeta_0(\partial^v P') = \mathcal{F}(\partial^v P')$. After this one can smoothen the plane field on the interior of each $3$-cell (as needed) relative to some neighbourhood $\mathcal{O}p(K^{(2)})$.

Let $\epsilon_t > 0 $ be such that $\epsilon_t \searrow 0$. For each $t > 0$ sufficiently small we now apply Lemma \ref{rem_relative_case} to obtain contact structures $\hat{\zeta}_t$ that are $\epsilon_t$-$C^0$-close to $\zeta_t$ on $\mathcal{O}p_t(K^{(2)})  \cup\mathcal{O}p(K^{(1)})$ (where the size of the first neighbourhood possibly depends on $t$) and so that $\hat{\zeta}_t = \zeta_t$ on $\mathcal{O}p(L)$ and near all supporting vertices of $K$. Moreover, we can still assume that the holonomies of the characteristic foliations $\hat{\zeta}_t(\partial^v P')$ are negative for all $t>0$ by taking $\epsilon_t$ sufficiently small.



We must now extend the contact structure $\hat{\zeta}_t$ over polyhedra $P$ that are not wholly contained in the neighbourhood $\mathcal{O}p'(L) \subseteq \mathcal{O}p(L)$ given by Lemma \ref{rem_relative_case}. We let $P$ be such a polyhedron. By Lemma \ref{polar} we can assume that $\hat{\zeta}_t$ is given via a polar modification near supporting vertices, i.e.\ as the kernel of $dz -\delta(t)r^2 d \theta$ for polar coordinates $(r,\theta)$ on the $D^2$-factor of $P'$. 

The argument now proceeds in 3 steps: we first find a continuous family of $C^0$-foliations by circles $\mathcal{G}_t$ on $\partial P'$ so that for $t>0$ each $\mathcal{G}_t$ is smooth and transverse to $\hat{\zeta}_t$ and $\mathcal{G}_0$ agrees with the foliation by circles induced by $\mathcal{F}$. For a sequence $t_n \searrow 0$ we then take a sequence of  smooth foliations $\mathcal{F}_{t_n}$ converging to $\mathcal{F}|_{P'}$ such that $\mathcal{F}_{t_n} = \mathcal{F}$ near the top and bottom of $P'$ and most importantly the foliations are transverse to $\hat{\zeta}_{t_n}$ on $\partial P'$. Finally one extends over $P'$ by twisting along Legendrian curves tangent to leaves of $\mathcal{F}_{t_n}$. The resulting contact structure will again be $C^0$-close to $\mathcal{F}_{t_n}$, and hence to $\mathcal{F}$, and agrees with $\hat{\zeta}_{t_n}$ near the $2$-skeleton. 

We now fill in some details:

\textbf{Step 1:} Let $X_t$ be the normalised vector field that is tangent to the (oriented) characteristic foliation $\hat{\zeta}_t( \partial P')$. Note that by our choice of smoothing above the orbits of the flow generated by $X_t$ converge to the leaves of the characteristic foliation $\zeta_0(\partial^v P') = \mathcal{F}(\partial^v P')$ in the $C^0_{Fol}$-sense. Consider the family of vector fields on $\partial D^2 \times [0,1]$ given by
$$Y_{t} = X_t + \delta(t) \cdot \partial_z,$$
where $z$ denotes the second coordinate in $D^2 \times [0,1]$ and $\delta(t)$ is a continuous function that is positive for $t >0$ and $\delta(0)= 0$. Note that we can assume that this vector field remains transverse to the boundaries of the disc leaves near the top and bottom of $D^2 \times [0,1]$ by choosing $\delta(t)$ sufficiently small. 

We have now defined a continuous family of $C^0$-vector fields (smooth for $t>0$) each of which is everywhere (positively) transverse to $\hat{\zeta}_t(\partial P')$ for $t>0$. We can then modify the vector field using a partition of unity so that it becomes tangent to $\partial D^2 \times \{t\}$ for $t$ close to $0$ and $1$ respectively and remains transverse to $X_t$. By taking $\delta(t)$ sufficiently small we can still assume that the holonomies given by the flow along $Y_t$ are strictly negative for $t>0$.

Since the holonomies given by $Y_t$ on $P'$ are negative for $t > 0$, we can then modify the flow of $Y_t$ (parametrically) on a small strip $S \subseteq \partial P'$ contained in $V_1$ so that  all flow lines are closed circles and the flow remain transverse to $\hat{\zeta}_t(\partial P')$. We denote the resulting family of circle foliations by $\mathcal{G}_t$ and note that $\mathcal{G}_0$ agrees with the foliation on $\partial^v P'$ induced by $\mathcal{F}$.

\textbf{Step 2:} By Lemma \ref{extend_C^0} there is a sequence of smooth foliations $\mathcal{F}_n$ converging to $\mathcal{F}$ on $P'$ in the $C^0_{Fol}$-norm so that on $\partial P'$ the induced foliation is arbitrarily close to $\mathcal{G}_{t_n}$ and hence is transverse to $\hat{\zeta}_{t_n}$ for some sequence with $t_n \searrow 0$.

\textbf{Step 3:} 
We extend the contact structures $\hat{\zeta}_{t_n}$ to the interior of $P$ by twisting along the normalised vector field spanning the intersection $\hat{\zeta}_{t_n} \cap \mathcal{F}_{n}$. This twisting should be done very quickly near the boundary, so that the resulting plane field stays close to leaves of $\mathcal{F}_{n}$ on $P'$ -- the point being that this twisting reduces the angle between the plane field and $T\mathcal{F}_{n}$. One then twists very slowly into the center of each disc, which yields a contact structure that is close to $\mathcal{F}_{n}$ and hence to $\mathcal{F}$ for $n$ sufficiently large.  Note that this can be done relative to the neighbourhoods $N^{\pm}_P$ of supporting vertices as the contact structures are polar here by assumption and we thus obtain the desired extension of $\hat{\zeta}_{t_n}$ to a neighbourhood of all of $K$.
\end{proof}

\begin{rem}
Although Lemma \ref{extend_close} is only stated for $C^0$-confoliations it is also true for $C^0$-confoliations in the tangential sense as long as the the characteristic line field can be coherently integrated so that the holonomy of this characteristic foliation on $\partial P'$ is well defined. The reason for this is that the holonomies will be negative in this case too (cf.\ Remark \ref{tangent_conf}).
\end{rem}
\section{Smoothing Ribbons}\label{sec_ribbons}
We consider a polyhedral decomposition that is in general position with respect to a foliation $\mathcal{F}$ and a line field $X$ transverse to $\xi = T\mathcal{F}$. Let $A_i$ be a finite collection of annular fences tangent to $X$ so that any point in $M$ can be joined to the interior of $A_i$ by a smooth leafwise arc. We then have the following definition as in \cite{Vog}.
\begin{defn}
Consider a polyhedral decomposition that is in general position with respect to a $C^0$-foliation $\mathcal{F}$ and let $X$ be a smooth normal line field. 
A {\bf system of smoothing ribbons} adapted to $X$ is a finite collection of pairwise {\bf disjoint} smoothly embedded strips $R_j = \sigma_j \times [-\epsilon,\epsilon]$ such that 
\begin{enumerate}
\item The arcs $\sigma_j \times \{\pm \epsilon\}$ are tangent to the foliation $\mathcal{F}$, each ribbon is transverse to $\mathcal{F}$ and tangent to $X$;
\item For the initial point $p_j$ of $\sigma_j$ the interval $p_j \times [-\epsilon,\epsilon]$ lies on the boundary of some polyhedron $P$ and for the end point $q_j$ the interval $q_j \times [-\epsilon,\epsilon]$ lies in the interior of some $A_i$ and is tangent to $X$;
\item The system is {\bf full} if for each polyhedron $P$, any point of $\partial P$ can be joined to some $R_j$ by a curve tangent to the characteristic foliation $\mathcal{F}(\partial P)$ for all polyhedra outside a small neighbourhood of $A_i$. 
\item The ribbons intersect all polyhedra transversely and are disjoint from the $1$-skeleton except possibly at supporting vertices.
\end{enumerate}
\end{defn}
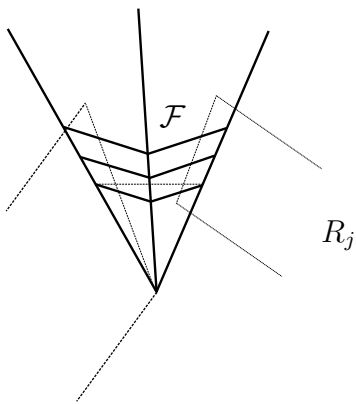
\begin{figure}[h]\label{Graph}
\psset{xunit=.9pt,yunit=.9pt,runit=.9pt}
\begin{pspicture}(132.90035969,165.35955264)
 \rput(70,120){$\mathcal{F}$}  \rput(140,70){$R_j$}
{
\newrgbcolor{curcolor}{0 0 0}
\pscustom[linewidth=1,linecolor=curcolor]
{
\newpath
\moveto(0.88986,156.78207264)
\lineto(63.39542,46.21111264)
\lineto(63.39542,46.21111264)
\lineto(110.54252,155.91490264)
}
}
{
\newrgbcolor{curcolor}{0 0 0}
\pscustom[linewidth=1,linecolor=curcolor]
{
\newpath
\moveto(55.76732,165.32740264)
\lineto(63.39542,46.96129264)
}
}
{
\newrgbcolor{curcolor}{0 0 0}
\pscustom[linewidth=0.3758944,linecolor=curcolor,linestyle=dashed,dash=0.7517888 0.37589441]
{
\newpath
\moveto(33.7511,125.06288264)
\lineto(63.05437,46.13858264)
}
}
{
\newrgbcolor{curcolor}{0 0 0}
\pscustom[linewidth=0.56384164,linecolor=curcolor,linestyle=dashed,dash=1.12768324 0.56384161]
{
\newpath
\moveto(0.22736,80.16670264)
\lineto(33.57247,125.64850264)
}
}
{
\newrgbcolor{curcolor}{0 0 0}
\pscustom[linewidth=0.39346445,linecolor=curcolor,linestyle=dashed,dash=0.78692889 0.39346445]
{
\newpath
\moveto(71.69887,83.22423264)
\lineto(88.5881,128.70662264)
}
}
{
\newrgbcolor{curcolor}{0 0 0}
\pscustom[linewidth=0.24446867,linecolor=curcolor,linestyle=dashed,dash=0.48893736 0.24446868]
{
\newpath
\moveto(71.71488,83.54931264)
\lineto(116.12106,52.66390264)
}
}
{
\newrgbcolor{curcolor}{0 0 0}
\pscustom[linewidth=1,linecolor=curcolor]
{
\newpath
\moveto(37.60986,91.47604264)
\lineto(61.0263,84.12971264)
\lineto(82.60615,91.01690264)
}
}
{
\newrgbcolor{curcolor}{0 0 0}
\pscustom[linewidth=0.40000001,linecolor=curcolor,linestyle=dashed,dash=0.8 0.4]
{
\newpath
\moveto(38.29858,91.47604264)
\lineto(82.60615,91.47604264)
}
}
{
\newrgbcolor{curcolor}{0 0 0}
\pscustom[linewidth=1,linecolor=curcolor]
{
\newpath
\moveto(31.18182,102.96826264)
\lineto(60.68194,94.12971264)
\lineto(87.65675,103.42742264)
}
}
{
\newrgbcolor{curcolor}{0 0 0}
\pscustom[linewidth=1,linecolor=curcolor]
{
\newpath
\moveto(24.17985,115.60836264)
\lineto(59.87844,104.24450264)
\lineto(92.70736,115.03443264)
}
}
{
\newrgbcolor{curcolor}{0 0 0}
\pscustom[linewidth=0.56384164,linecolor=curcolor,linestyle=dashed,dash=1.12768324 0.56384161]
{
\newpath
\moveto(29.86942,0.16670264)
\lineto(63.21453,45.64850264)
}
}
{
\newrgbcolor{curcolor}{0 0 0}
\pscustom[linewidth=0.24446867,linecolor=curcolor,linestyle=dashed,dash=0.48893736 0.24446868]
{
\newpath
\moveto(88.42438,128.80164264)
\lineto(132.83056,97.91623264)
}
}
\end{pspicture} \caption{Ribbons near a supporting vertex of a polyhedron $P$.}\label{Ribbons}
\end{figure}
Such a collection of ribbons is easy to construct by compactness (cf.\ \cite{Vog} Lemma 4.20). First choose paths that join all supporting vertices to some $A_i$ and small transversals tangent to $X$ near each vertex so that pushing along the leaves of $\mathcal{F}$ maps this transversal into the interior $A_i$. Then for a given polyhedron $P$ we consider a total transversal $\tau_P$ for the characteristic foliation $\xi(\partial P)$ consisting of finitely many disjoint compact intervals that do not meet the $1$-skeleton except at supporting vertices. We join each point of $\tau_P$ to some $A_i$ by a leafwise path. We then consider small transversals that are also mapped to $A_i$ under the holonomy along the chosen path. By compactness finitely many such transversals will cover $\tau_P$ away from neighbourhoods of the supporting vertices. By general position we may assume that the intersections with the boundary of each polyhedron are disjoint from the $1$-skeleton.

These ribbons may intersect and we now explain how to resolve these intersections. After a small isotopy we may assume that these intersections are transverse and we may also assume that all ribbons are disjoint near their endpoints. Then we inductively remove all intersections as follows: if two ribbons $R_1 = \sigma_1 \times [0,1]$ and $R_2 = \sigma_2 \times [0,1]$ intersect along an interval $J$, then we replace the second ribbon by one that is parallel to a sub-ribbon of $R_2$ and then runs parallel to $R_1$ and a second (possibly empty) ribbon that is a subset of $R_2$ (cf.\ Figure \ref{Ribbons_resolve}). Iterating this resolution process gives the desired collection of ribbons.
\begin{figure}[h]\label{resolution_of_ribbons}
\includegraphics[scale=0.7]{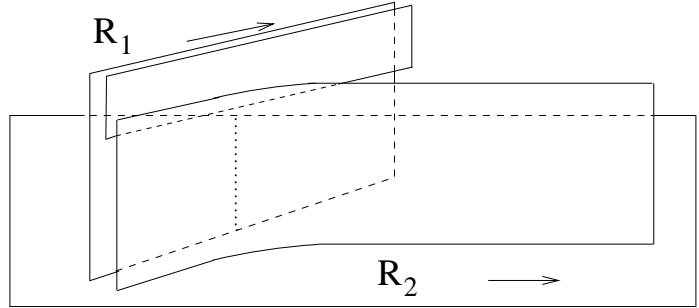} \caption{Resolving the intersection of two ribbons (picture courtesy of T. Vogel).}\label{Ribbons_resolve}
\end{figure}

These smoothing ribbons are an essential technical tool as they will allow us to smoothen our foliation near sufficiently many disjoint arcs so that we will be able to transport contactness around the manifold beginning with a confoliation that is only of class $C^0$.

\section{Proof of Theorem \ref{approx_cont}}\label{sec_proof}
The argument will consist of a finite sequence of approximations. Since each of these approximations can be made arbitrarily $C^0$-small at each step, this yields the required approximation. As outlined in the Introduction, the proof is carried out in 4 steps:

\subsection*{Step 1: Making the number of compact leaves finite}
We first modify $\mathcal{F}$ so that it only has finitely many compact leaves. This is identical to the smooth case in \cite{ETh} and is based on the following (cf.\ \cite{CC} Theorem 6.1.1):
\begin{thm}[Haefliger \cite{Hae}]
The set of compact leaves of a codimension-$1$ foliation on a closed manifold is compact.
\end{thm}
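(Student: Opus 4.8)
The plan is to establish the equivalent statement that the union $\mathcal{C}$ of all compact leaves of $\mathcal{F}$ is a \emph{closed} subset of $M$ (hence, since $M$ is closed, a compact one).

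First I would record two easy reductions. The set $\mathcal{C}$ is saturated, and its closure $\overline{\mathcal{C}}$ is again saturated: if $x = \lim_n x_n$ with each $x_n$ on a compact leaf $L_n$, then for any $y$ in the leaf $L_x$ through $x$ one transports a path from $x$ to $y$ in $L_x$ along the nearby leaves $L_n$ to produce points $y_n \to y$ with $y_n \in L_n$, so that $y \in \overline{\mathcal{C}}$ as well. Hence it is enough to show that a \emph{non-compact} leaf cannot lie in $\overline{\mathcal{C}}$; equivalently, if $L$ is a leaf, $I$ a compact transversal meeting $L$ at a point $x$, and $L_n$ are compact leaves with $L_n \cap I \ni x_n \to x$, then $L$ is compact.

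So assume for contradiction that $L$ is non-compact. Since $M$ is compact, $L$ is not a proper submanifold, and one extracts a point $p \in M$ together with a small compact transversal $\Sigma$ through $p$ such that $L \cap \Sigma$ is infinite and accumulates at $p$, with $p \notin L \cap \Sigma$ (take $p \in \overline{L}\setminus L$ if $L$ is not a closed subset, or a self-accumulation point of $L$ otherwise). Orienting $\Sigma$, label the points of $L \cap \Sigma$ near $p$ as $q_0 > q_1 > q_2 > \cdots \to p$. The strategy is now: using that $L$ is connected, join suitable of these points by paths in $L$ and consider the associated holonomy germs of $\Sigma$; since $\mathcal{F}$ is co-oriented these germs are monotone, and because the $q_j$ are consecutive intersections of $L$ with $\Sigma$ they push monotonically toward $p$. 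Iterating such a holonomy map starting from $q_0$ then produces an infinite, strictly monotone sequence of points of $L \cap \Sigma$ contained in a fixed compact sub-arc of $\Sigma$; and the \emph{same} holonomy, applied to a nearby compact leaf $L_n$ (whose holonomy along the same path is $C^0$-close), produces an infinite, strictly monotone sequence of points of $L_n \cap \Sigma$ inside that compact sub-arc. This contradicts the elementary fact that a compact leaf meets a compact transversal in only finitely many points (cover the transversal by finitely many flow boxes; a compact leaf meets each in finitely many plaques, and each plaque meets the transversal finitely often).

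The main obstacle — and the heart of Haefliger's argument — is precisely the control needed to make the iteration step legitimate: one must choose the path(s) in $L$ so that the holonomy germ $h$ genuinely has no fixed point truncating the forward orbit of $q_0$, and so that $h$ (hence the approximating $h_n$, uniformly in $n$) is defined on an interval large enough for all forward iterates to remain inside it; the unavoidable shrinking of holonomy domains along longer paths, together with the possibility of intervening fixed points coming from other leaves, is exactly what has to be circumvented. This is carried out in \cite{CC} (Theorem 6.1.1), and since all the notions involved — transversals, holonomy of paths in leaves, flow boxes — are available for foliations of class $C^{0,\infty+}$, the argument applies verbatim in the present setting.
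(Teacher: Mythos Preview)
The paper does not prove this theorem; it is quoted as a classical result of Haefliger (with the pointer ``cf.\ \cite{CC} Theorem 6.1.1'') and used as a black box in Step~1 of the proof of Theorem~\ref{approx_cont}. So there is no proof in the paper to compare against, and your proposal ultimately lands in the same place the paper does: after sketching a strategy, you defer the actual work to \cite{CC} Theorem 6.1.1.

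On the content of your sketch: the overall shape is right --- show that a non-compact leaf $L$ cannot be approximated by compact leaves by producing, via holonomy transport, infinitely many intersections of a nearby compact $L_n$ with a fixed compact transversal. But the step you call ``the heart of Haefliger's argument'' is genuinely the whole proof, and your description of it is not quite accurate. A path in $L$ from $q_0$ to $q_1$ gives a holonomy germ $h$ with $h(q_0)=q_1$, but there is no reason the \emph{same} $h$ should satisfy $h(q_1)=q_2$, $h(q_2)=q_3$, and so on; different pairs $q_j, q_{j+1}$ are joined by different paths in $L$ with different holonomy germs and shrinking domains. So the phrase ``iterating such a holonomy map'' hides exactly the difficulty you then disclaim. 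The standard route (as in \cite{CC}) instead exploits that the compact leaves $L_n$ furnish fixed points for \emph{every} holonomy germ of $L$, accumulating at the base point, and combines this with a local stability argument to force $L$ itself to be compact; your sketch does not really touch that mechanism.
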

\noindent Note that by Reeb stability no compact leaf can be simply connected, as otherwise the foliation would be the product foliation on $S^2 \times S^1$, which has been excluded by assumption. In view of this there is a finite collection of (smooth) embeddings $N_k = \Sigma_k \times [0,c_k]$, whereby $\Sigma_k$ is a compact surface of genus at least $1$, so that each $N_k$ is a foliated $I$-bundle, $\Sigma_k \times \{0,c_k\}$ are leaves of $\mathcal{F}$ (we allow the possibility $c_k=0$) and any closed leaf is contained in some $N_k$. By subdividing we can assume that these foliated bundles are arbitrarily thin. We then insert a suspension foliation that does not have any closed leaves in the interior. By making the pieces we insert sufficiently thin this can be achieved in a $C^0$-small fashion. To be precise a closed leaf is given as the graph of a smooth function $f_{L}\co \Sigma_k \longrightarrow [0,1]$ and given two leaves $L_0,L_1$ the linear interpolation 
$$f = (1-t)f_{L_0} +t f_{L_1}$$
gives a smooth map of the product $\Sigma_k \times [0,1]$ sending top and bottom leaf to $L_0,L_1$ respectively. Since the partial $C^0_{Fol}$-norms of the maps are close this will also be true for the linear interpolation and the resulting foliation will be close to $\mathcal{F}$. One can then insert a (smooth) suspension foliation in a $C^0$-small fashion.

Note that after this modification each closed leaf is isolated. Moreover, any isolated closed leaf contains an embedded curve whose holonomy is non-trivial on both sides. It is important to remark that we do not claim that this holonomy is either (sometimes) attractive or repelling as the leaf may be unstable. 
\subsection*{Step 2: Producing holonomy near minimal sets}
We now manufacture finitely many embedded annuli $A_i$ transverse to $\mathcal{F}$ so that every point in $M$ can be connected to the interior of some $A_i$ by a curve that is contained in a leaf. We will call such a collection of annuli a {\bf transitive collection of annular fences}. We will also want that the cores $\gamma_i$ of these annuli are contained in a leaf and that the holonomy around $\gamma_i$ has a contracting/repelling interval, which will be achieved by blowing up leaves and inserting holonomy.

First observe that $\mathcal{F}$ now has only finitely many minimal sets, since by Lemma \ref{finite_exceptional} there are only finitely many exceptional minimal sets in general and by construction we have modified $\mathcal{F}$ to have only finitely many compact leaves. Note also that the closure of each leaf contains one of these minimal sets. This means that we need one annular fence $A_i$ for each minimal set to obtain a transitive collection. 

We now modify $\mathcal{F}$ so that these fences can be taken to have contracting/repelling holonomy. First approximate $\mathcal{F}$ so that it is very nice on neighbourhoods $N(A_i)$ of each annular fence $A_i$ (cf.\ Section \ref{nice_section}). By a further $C^{0}$-small perturbation these fences can be taken to be nice at the boundary.

Consider a minimal set $M_*$ (if $\mathcal{F}$ is minimal, then $M_* = M$). By assumption $M_*$ is a closed saturated subset containing at least one leaf, say $L_0$, which is not simply connected. Let $\gamma$ be a homotopically non-trivial embedded curve in $L_0$ and consider the holonomy around $\gamma$ for a sufficiently small normal arc, so that the holonomy map 
$$h\co I = (-\epsilon,\epsilon) \longrightarrow h(I)$$
is well defined and $h(0) =0$ corresponds to the curve $\gamma$. We consider several cases:

\bigskip

\underline{Case 1:} $h$ has a unique fixed point

\medskip

If the holonomy has no fixed points apart from $0$, then it is either attractive/repelling or attractive on one side and repelling on the other. In the former case we are done and in the latter case we blow up $L_0$ and using Lemma \ref{hol_insertion} we insert a suspension foliation so that $L_0$ is replaced by two leaves both of which have an attractive/repelling interval. In this way we replace one annular fence by two fences (cf.\ Figure \ref{hol_unstable}), one of which is contracting the other expanding (with respect to the induced orientation).

\bigskip

\begin{figure}[h]\label{Insert_hol_unstable}
\input{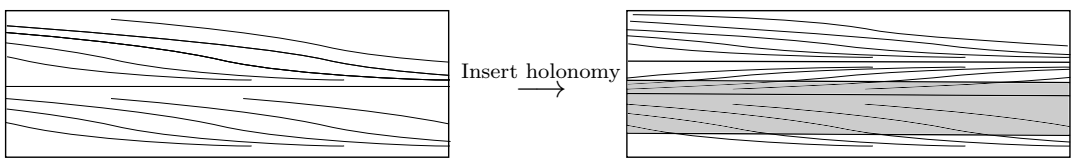} \caption{Inserting holonomy in the unstable case. One of the resulting fences is shaded grey.}\label{hol_unstable}
\end{figure}

\underline{Case 2:} $h$ is (germinally) non-trivial on each side of $L_0$

\medskip

Since the hypothesis implies that there are arbitrarily small contracting/repelling intervals on either side of $L_0$, we can argue exactly as in the previous case to obtain (arbitrarily thin) fences with attracting/repelling intervals.

\bigskip

\underline{Case 3:} $h$ has an interval of fixed points containing $0$

\medskip

In this case the holonomy is trivial on one or both sides of $L_0$. We consider a leaf $L'$ containing a curve $\gamma'$ parallel to the core of the annular fence $\gamma$ which is homotopically non-trivial in $L_0$ by assumption. We can assume that $\gamma'$ is again homotopically non-trivial in $L'$, unless $L_0$ is a closed toral leaf by Proposition \ref{nov_closed}. But since we have assumed that there are only finitely many closed leaves, there must be some embedded curve on $L_0$ with holonomy on both sides, so that we are in the previous case. Thus we can assume that $\gamma'$ is not homotopically trivial.

\begin{figure}[h]\label{Insert_hol}
\input{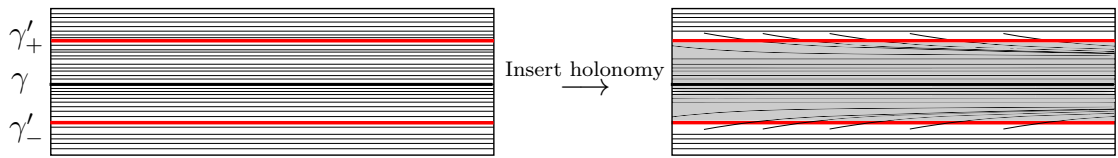} \caption{Inserting holonomy above and below the thickened black curve. The resulting fence is shaded grey.}\label{hol}
\end{figure}

We then thicken $L'$ and insert a suspension foliation with holonomy around $\gamma'$. If the holonomy is trivial on both sides then we thicken a leaf containing parallel copy of $\gamma$ both above and below $L$. We denote these curves by $\gamma'_{\pm}$. This then yields an annular fence with attracting/repelling holonomy (see Figure \ref{hol}). 

Since the resulting plane fields can be assumed to be $C^0$-close to the tangent plane field of $\mathcal{F}$, we can assume that the initial choice of annular fences remains transitive. We can also assume that all annular fences are very nice with respect to our original choice of nice coordinates and that their heights are arbitrarily small. 

\subsection*{Step 3: Producing a contact structure away from the annular fences: smoothing and transporting}

\

\bigskip

\noindent \textbf{Pick a triangulation and jiggle it:} Choose a triangulation of $M$ and let $A_1,\ldots,A_n$ be the collection of annular fences constructed above. Consider neighbourhoods $N(A'_i) \subseteq N(A_i)$ as in the proof of Lemma \ref{cont_holonomy}. We then apply Thurston's Jiggling Lemma (Lemma \ref{jiggling_lemma}) so that the triangulation is in general position with respect to $\mathcal{F}$. We will also require that the triangulation is in general position with respect to the plane field given by $dx = 0$ as well as the line field $\frac{\partial}{\partial y}$ associated to the chosen very nice coordinates $(x,y,z)$ on $N(A_i)$. It will also be convenient to assume that the triangulation is in general position with respect to the line field given by $\xi \cap \eta_y$ near $A_i$, where $\eta_y = Ker(dy)$.

We now modify the triangulation to a polyhedral decomposition as given by Lemma \ref{lem_support}.  By taking the initial triangulation sufficiently fine, we can assume that $\partial N(A_i)$ is contained in the interior of a subcollection $L$ of the resulting polyhedral decomposition that is disjoint from $N(A'_i)$.

\bigskip

\noindent \textbf{Smoothen near the $1$-skeleton:} After an initial perturbation we can assume that the foliation is smooth near $\partial A_i$ (Remark \ref{rem_nice_fences}). Next smoothen $\mathcal{F}$ on a neighbourhood $U_1$ of the $1$-skeleton as in Lemma \ref{smooth_1_skel}. We claim that the smoothened foliation $\mathcal{F}'$ can be constructed to be tangent to the $y$-intervals of the chosen very nice coordinates on $N(A_i)$. To achieve this consider the image $D_e$ of each edge $e$ that meets $N(A_i)$ under the flow in the $y$-direction. We then smoothen near each $D_e$, which preserves the property of being constant in $y$, and take the cut-off in Lemma \ref{smooth_1_skel}  to be constant in the $y$ direction on $N(A_i)$. Note that the discs $D_e$ may intersect, but by general position we can assume that these intersections are a collection of intervals, and smoothing near these intervals first, means that entire smoothing can be done coherently. Note that this procedure amounts to smoothing the characteristic foliation on the annulus $A$ near a collection of transverse arcs. These arcs may intersect, but these intersections can be assumed to be transverse, and smoothing near such a collection of arcs is not a problem. 
\

\noindent \textbf{Choose a full collection of ribbons:} Let $R_j$ be a full collection of ribbons for all polyhedra meeting the complement of the interiors of the neighbourhoods $N(A_i)$. We choose these so that the intersection with $N(A_i)$ is tangent to the $y$-intervals of the chosen very nice coordinates. This is possible by the assumption that the triangulation is in general position with respect to the plane field $dx = 0$. By a further perturbation we can also assume that the vertical boundary $\partial^v N(A_i)$ is transverse to the $2$-skeleton of each polyhedron $P$. Here we decompose the boundary $\partial N(A_i) = \partial ^v N(A_i) \cup \partial ^h N(A_i)$ into a vertical and a horizontal part (see Figure \ref{leaf_box}).

We next smoothen on small disjoint neighbourhoods of the ribbons $R_j$. Note that the neighbourhood of the $1$-skeleton $U_1$ above can be assumed disjoint from each ribbon except near supporting vertices, where the foliation has already been made smooth. Ribbons $R_j$ beginning at points in $\partial ^h N(A_i)$ can be taken to be wholly contained in the smooth region of $\mathcal{F}$ already, so that smoothing is not necessary here. All this smoothing can be done in a $C^0$-small fashion and thus preserves general position of the resulting foliation with respect to the given polyhedral decomposition. Moreover, as above we can still assume that all ribbons $R_j$ are tangent to the $y$-fibers of the chosen nice coordinates. 

\bigskip

\bigskip

\begin{figure}[h]
\psset{xunit=.5pt,yunit=.5pt,runit=.5pt}

\begin{pspicture}(448.57139888,105.28571437)
\rput(10,80){$\partial^v N(A_i) \longrightarrow$} \rput(480,40){$L_0$}\rput(210,130){$\partial^h N(A_i)$}
{
\newrgbcolor{curcolor}{0 0 0}
\pscustom[linewidth=0.62803688,linecolor=curcolor]
{
\newpath
\moveto(79.81400928,104.97169094)
\lineto(343.04308904,104.97169094)
\lineto(343.04308904,0.31402048)
\lineto(79.81400928,0.31402048)
\closepath
}
}
{
\newrgbcolor{curcolor}{0 0 0}
\pscustom[linewidth=0.45298207,linecolor=curcolor]
{
\newpath
\moveto(113.15504885,87.91633307)
\lineto(312.27347571,87.91633307)
\lineto(312.27347571,15.94075142)
\lineto(113.15504885,15.94075142)
\closepath
}
}
{
\newrgbcolor{curcolor}{0 0 0}
\pscustom[linewidth=4.9999997,linecolor=curcolor]
{
\newpath
\moveto(0,49.0713939)
\lineto(448.57140626,49.0713939)
}
}
{
\newrgbcolor{curcolor}{0 0 0}
\pscustom[linestyle=none,fillstyle=solid,fillcolor=curcolor]
{
\newpath
\moveto(221.09679724,49.30097893)
\curveto(221.09679724,45.75057676)(218.21862803,42.87240713)(214.6682264,42.87240713)
\curveto(211.11782476,42.87240713)(208.23965556,45.75057676)(208.23965556,49.30097893)
\curveto(208.23965556,52.85138109)(211.11782476,55.72955072)(214.6682264,55.72955072)
\curveto(218.21862803,55.72955072)(221.09679724,52.85138109)(221.09679724,49.30097893)
\closepath
}
}
{
\newrgbcolor{curcolor}{0 0 0}
\pscustom[linewidth=0.25300001,linecolor=curcolor]
{
\newpath
\moveto(113.01079626,87.88328159)
\lineto(79.54950726,87.6307416)
}
}
{
\newrgbcolor{curcolor}{0.80000001 0.80000001 0.80000001}
\pscustom[linestyle=none,fillstyle=solid,fillcolor=curcolor]
{
\newpath
\moveto(80.29079721,96.44885108)
\lineto(80.29079721,104.59321059)
\lineto(211.48435039,104.59321059)
\lineto(342.67791257,104.59321059)
\lineto(342.67791257,96.44885108)
\lineto(342.67791257,88.30450156)
\lineto(211.48435039,88.30450156)
\lineto(80.29079721,87.92569159)
\closepath
}
}
{
\newrgbcolor{curcolor}{0 0 0}
\pscustom[linewidth=0.25300001,linecolor=curcolor]
{
\newpath
\moveto(342.90925256,88.00955158)
\lineto(312.22588439,88.00955158)
}
}
{
\newrgbcolor{curcolor}{0 0 0}
\pscustom[linewidth=0.25300001,linecolor=curcolor]
{
\newpath
\moveto(113.01079626,15.88328588)
\lineto(79.54950726,15.6307459)
}
}
{
\newrgbcolor{curcolor}{0 0 0}
\pscustom[linewidth=0.25300001,linecolor=curcolor]
{
\newpath
\moveto(342.90925256,16.13582587)
\lineto(312.35215438,15.88328588)
}
}
{
\newrgbcolor{curcolor}{0.80000001 0.80000001 0.80000001}
\pscustom[linestyle=none,fillstyle=solid,fillcolor=curcolor]
{
\newpath
\moveto(80.32099721,7.84669636)
\lineto(80.32099721,14.71736595)
\lineto(89.80029765,15.04262593)
\curveto(95.01391734,15.22151592)(154.05101382,15.36851591)(220.99384983,15.36928591)
\lineto(342.70810257,15.37028591)
\lineto(342.70810257,8.17294634)
\lineto(342.70810257,0.97560677)
\lineto(211.51455039,0.97560677)
\lineto(80.32099721,0.97560677)
\lineto(80.32099721,7.84628636)
\closepath
}
}
\end{pspicture} \caption{A schematic picture of the neighbourhoods $N(A'_i) \subseteq N(A_i)$ near $\gamma \subseteq L_0$ which is depicted as a large dot. The plane field is smooth on the shaded region}\label{leaf_box}
\end{figure}
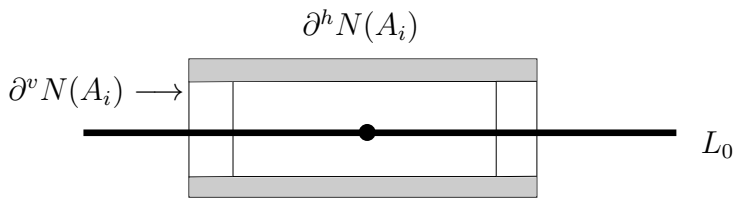

\bigskip

\noindent \textbf{Manufacture and transport contactness:} Apply Lemma \ref{cont_holonomy} to obtain a family of $C^0$-confoliations $(\xi_t)_{t\in[0,\epsilon]}$ that is contact near $\partial ^h N(A_i)$ for $t>0$ and is tangent to $y$-intervals of the chosen very nice coordinates on $N(A_i) \setminus N(A'_i)$ (cf.\ Remark \ref{y_coord}). For $t>0$ the ribbons $R_j$ are contained in the smooth part of the modified $1$-form so that the resulting plane field is a smooth confoliation near each $R_j$ and we can assume that these confoliations are contact near the positive end of each $R_j$ for $t>0$. If the positive end of $R_j$ lies in $\mathcal{O}p(\partial A_i)$, this is true by construction, and if not, then since $R_j$ lies in the smooth region of the foliation we can simply twist around the  $y$-coordinate vector field to achieve this. We then transport the non-integrability along $R_j$ using Lemma \ref{transport_para} to obtain a family of $C^0$-confoliations $(\xi_t)_{t\in[0,\epsilon)}$ outside a neighbourhood of the annular fences. Note that we do not need to alter anything for polyhedra $P$ in $L \cap \mathcal{O}p(\partial^hN(A))$, since by construction $\xi_t$ is contact near $\partial^hN(A)$ for all $t>0$.

\bigskip

\noindent \textbf{Fill in polyhedra:} For any polyhedron $P$ that is disjoint from $N(A'_i)$ each leaf of the characteristic foliation $\xi_t(\partial P)$ passes through the contact region for $t>0$ and hence the holonomy is decreasing away from supporting vertices. Furthermore, it is easy to arrange that $\xi_t$ is a tangential confoliation near $\partial^v N(A_i)$ with respect to $\frac{\partial}{\partial y}$ (cf.\ Definition \ref{tangential_conf}) which is contact on a neighbourhood of the subcomplex $L$ for $t>0$ by simply twisting along the $y$-axis by a small amount after smoothing on annuli $A_i^{\pm}$ parallel (and close) to $\partial^v N(A_i)$ but disjoint from $N(A_i)$. Recall that the foliation -- and also each $\xi_t$ -- has already been made smooth near the $1$-skeleton and after this final modification we can assume that the characteristic foliation induced by $\xi_t$ on $\partial P$ can be coherently integrated away the annuli $A_i^{\pm}$. In order to ensure that the holonomies on $\partial P$ can be defined everywhere in terms of characteristic foliations varying continuously in $t$, it is enough to ensure that the original foliation was smooth near the intersection of $A_i^{\pm}$ and $P^{(2)} \setminus \mathcal{O}p( P^{(1)})$ which consists of a union of transversals contained in faces of $ \partial P$ by general position. But again this provides no serious problems.

In particular, we can assume that $\xi_t$ is contact on the subcomplex $L$ containing $\partial N(A_i)$ in its interior and also that $\xi_t$ is tangent to $\frac{\partial}{\partial y}$ on $L$ as well. We now apply the relative version of Lemma \ref{extend_close} to obtain the desired contact approximation $\xi' $ away from $N(A'_i)$, which is in particular tangent to the $y$-intervals of the chosen nice coordinates near the entire boundary of $N(A_i)$.



 \subsection*{Step 4: Filling in annular holes}
We finally need to fill in a neighbourhood of each annular fence $N(A_i) \cong A \times [-\eta,\eta]$ by a contact structure relative to the boundary. By construction the contact structure is tangent to the $y$-intervals of the chosen very nice coordinates. We then apply Lemma  \ref{movie_quant} to obtain the desired extension, which can be assumed to be $C^0$-close to $\xi'$, since the height $\eta$ of each annular fence was fixed before any of the modifications in the previous step were made and can thus be assumed to be arbitrarily small. This concludes the proof of Theorem \ref{approx_cont}. 

Note that we have formally only constructed a positive contact structure approximating $\mathcal{F}$. To obtain a negative approximating contact structure one simply swaps the orientation of $M$ and applies the same argument.
 
 \section{Consequences and discussion}\label{sec_discussion}
 We now collect some corollaries. The first is the $C^0$-analogue of (\cite{ETh} Corollary 3.2.8) and the proof is identical.
 \begin{cor}\label{taut_tight}
Let $ \mathcal{F}$ be a taut $C^0$-foliation on $M$ that is not the foliation by spheres on $S^2 \times S^1$. Then there are both positive and negative contact structures $\xi_-$ and $\xi_+$ that are symplectically semi-fillable, universally tight and homotopic as plane fields to $T\mathcal{F}$. Moreover, the underlying manifold of the symplectic semi-filling is $M \times [0,1]$.
 \end{cor}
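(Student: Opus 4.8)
The plan is to deduce Corollary~\ref{taut_tight} from Theorem~\ref{approx_cont} by running the classical Eliashberg--Thurston argument (\cite{ETh}, §3.2) essentially verbatim, checking that each ingredient it uses is available for $C^0$-foliations. First, apply Theorem~\ref{approx_cont}: since $\mathcal{F}$ is taut and in particular not the sphere foliation on $S^2\times S^1$, and a taut foliation is never a (non-minimal) foliation by planes on $T^3$ --- such a foliation bounds no closed transversal through the torus leaves --- we obtain a positive contact structure $\xi_+$ and a negative contact structure $\xi_-$ with $T\xi_\pm$ arbitrarily $C^0$-close to $T\mathcal{F}$. Closeness of plane fields gives homotopy of plane fields, so $\xi_\pm$ is homotopic to $T\mathcal{F}$; this already disposes of the last clause. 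It remains to produce the symplectic semi-filling and to upgrade tightness to universal tightness.

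Next I would build the symplectic structure on $M\times[0,1]$. By Sullivan's criterion (the Lemma on foliation cycles quoted in the excerpt, following \cite{Sull}), tautness of $\mathcal{F}$ gives a closed $2$-form $\omega$ with $\omega|_{T\mathcal{F}}>0$; by compactness this positivity persists for any plane field $C^0$-close to $T\mathcal{F}$, so $\omega|_{\xi_\pm}>0$. One then forms the Thurston--Eliashberg symplectic form $\Omega = \omega + d(t\,\alpha_\pm)$ on $M\times[0,1]$, where $\alpha_\pm$ is a defining $1$-form for $\xi_\pm$ and $t$ the $[0,1]$-coordinate: for small enough $[0,1]$-direction (rescaling $t$) and because $\xi_\pm$ is \emph{contact} (not merely a confoliation), $\Omega$ is symplectic on a neighbourhood of $M\times\{1\}$ and, using $\omega|_{\xi_\pm}>0$, on all of $M\times[0,1]$; one boundary component is a contact-type (convex) boundary with induced contact structure $\xi_\pm$ (up to the usual orientation bookkeeping, which is why both signs appear). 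Thus $(M\times[0,1],\Omega)$ is a symplectic semi-filling of $\xi_\pm$, giving the ``Moreover'' clause about the underlying manifold.

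Finally, universal tightness: pass to the universal cover. Gromov--Eliashberg's theorem says a symplectically (semi-)fillable contact structure is tight; the point is that fillability pulls back to covers --- $\widetilde{M}\times[0,1]$ with the pulled-back form is a symplectic semi-filling of $\widetilde{\xi}_\pm$ --- so $\widetilde{\xi}_\pm$ is tight, i.e.\ $\xi_\pm$ is universally tight. (One uses here that $\mathcal{F}$ taut is Reebless, so by Theorem~\ref{Novikov} $\pi_1(M)$ is infinite and $\pi_2(M)=0$, which is the setting in which the filling-by-$M\times[0,1]$ argument behaves well; for $S^2\times S^1$ it fails, which is exactly the excluded case.) The main obstacle --- and the only place $C^0$-ness could conceivably intrude --- is verifying that Sullivan's taut $\Rightarrow$ closed transverse $2$-form statement and the Thurston--Eliashberg deformation argument genuinely survive in the $C^0,\infty+$ category; but the excerpt already records the Sullivan lemma for $C^0$-foliations, and the symplectic construction only ever uses the \emph{contact} form $\alpha_\pm$, which is genuinely smooth, together with the continuous form $\omega$, so no smoothing of $\mathcal{F}$ itself is needed. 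Hence the proof is ``identical'' to \cite{ETh} Corollary 3.2.8, as claimed.
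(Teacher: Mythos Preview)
Your argument has a genuine gap in the very first step, where you invoke Theorem~\ref{approx_cont}. That theorem excludes \emph{all} foliations by planes on $T^3$, and a taut foliation can perfectly well be a foliation by planes: any irrational linear foliation on $T^3$ is a minimal taut foliation by planes, and blowing up a leaf of such a foliation yields a non-minimal taut foliation by planes. Your justification (``such a foliation bounds no closed transversal through the torus leaves'') is simply confused---a foliation by planes has no torus leaves at all. So you have not shown that Theorem~\ref{approx_cont} applies, and indeed it does not in this case.

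The paper handles the foliation-by-planes case separately: by Imanishi's theorem a $C^0$-foliation by planes on $T^3$ is semi-conjugate to a \emph{smooth} foliation $\mathcal{F}'$ given by a closed $1$-form, and the collapsing map realising the semi-conjugacy gives a homotopy of plane fields $T\mathcal{F}\simeq T\mathcal{F}'$. One then applies the classical $C^2$ Eliashberg--Thurston result to $\mathcal{F}'$ to obtain the required $\xi_\pm$ in the correct homotopy class. Once this case is disposed of, the remainder of your argument is essentially the paper's: the only cosmetic difference is that the paper writes the symplectic form as $\Omega=\epsilon\, d(t\,\tilde\alpha)+\omega$ using a single smooth approximation $\tilde\alpha$ of the (continuous) defining form for $T\mathcal{F}$, so that the one symplectic manifold $M\times[0,1]$ weakly fills $(M,\xi_+)\sqcup(-M,\xi_-)$ simultaneously, rather than using the contact form $\alpha_\pm$ as you do.
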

 \begin{proof}
 Let $\mathcal{F}$ be a taut foliation which is not a foliation by planes and let $\omega$ be a dominating closed $2$-form and $\alpha_0$ a continuous defining form for $T\mathcal{F}$. Then $M \times [0,1]$ is a symplectic semi-filling of $(M,\xi_+) \sqcup (-M,\xi_-)$ with symplectic form  $\Omega = \epsilon d(t \thinspace \tilde{\alpha}) + \omega$ for $\epsilon$ sufficiently small and $\tilde{\alpha}$ a smooth approximation of $\alpha_0$. The same is true when one passes to the universal cover and by applying the Gromov-Eliashberg argument to the Bishop family associated to an overtwisted disc it follows that $\xi_+,\xi_-$ are universally tight (cf.\ \cite{ETh}). 
 
 If $\mathcal{F}$ is a foliation by planes then $\mathcal{F}$ is semi-conjugate to a \emph{smooth} foliation by planes, say $\mathcal{F}'$, by Imanishi \cite{Ima}. Hence $T \mathcal{F} \simeq T \mathcal{F}'$ and the result follows from the $C^2$-case. 
 \end{proof}
 \begin{rem}
 Although Corollary \ref{taut_tight} is stated for (everywhere) taut foliations in the strongest sense that there is a closed transversal through every point, in view of the results of \cite{KR3} it also holds for foliations that are only smoothly taut in the sense that every leaf meets a closed transversal.
 \end{rem}
 \noindent A further application of Theorem \ref{approx_cont} is given by the following result, which was noted in \cite{BC}, where the argument was not complete due to the absence of Theorem \ref{approx_cont}.
 \begin{cor}\label{L_Space}
 Let $M$ be a rational homology sphere, which is a Heegaard-Floer homology L-space. Then $M$ admits no taut oriented $C^0$-foliations.
 \end{cor}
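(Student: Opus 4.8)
The plan is to combine Theorem \ref{approx_cont} with the known incompatibility between tight contact structures and the $L$-space condition coming from Heegaard-Floer homology. First I would recall the theorem of Ozsv\'ath-Szab\'o (and the contact-geometric refinements by Ghiggini and others) stating that if a rational homology sphere $M$ carries a contact structure $\xi$ which is weakly symplectically fillable — or more generally admits a semi-filling with $b_2^+>0$ — then $M$ is not an $L$-space, because the Heegaard-Floer contact invariant together with the filling obstruction forces $\widehat{HF}(M)$ to have rank strictly greater than $|H_1(M;\mathbb{Z})|$. Concretely, one uses that a weak filling of $(M,\xi)$ with $b_2^+ = 0$ can be capped off (Eliashberg, Etnyre) and that the resulting closed symplectic manifold, via the Ozsv\'ath-Szab\'o mixed invariants or the Kronheimer-Mrowka-type non-vanishing, obstructs the $L$-space property.

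Next I would invoke Corollary \ref{taut_tight}: since $M$ is assumed to carry a taut oriented $C^0$-foliation $\mathcal{F}$, and $M$ is a rational homology sphere (hence certainly not $S^2\times S^1$, which has $b_1 = 1$), there is a positive contact structure $\xi_+$ homotopic to $T\mathcal{F}$ that is symplectically semi-fillable, with the semi-filling having underlying manifold $M\times[0,1]$. The one technical point to address is that $M\times[0,1]$ is a \emph{semi}-filling rather than an honest filling; here I would cite the fact that for a rational homology sphere a weak semi-filling can be promoted to a weak filling by filling in the remaining boundary components with Liouville-type caps (this is exactly the standard reduction used in the $C^2$-case, e.g. in work of Kronheimer-Mrowka-Ozsv\'ath-Szab\'o), or alternatively one applies the Heegaard-Floer obstruction directly in the semi-fillable setting as phrased in \cite{BC}. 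Either way, the upshot is that $M$ admits a weakly fillable, in particular tight, contact structure.

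Combining the two inputs: if $M$ were an $L$-space, the Ozsv\'ath-Szab\'o obstruction would forbid any weakly (semi-)fillable contact structure on $M$; but Corollary \ref{taut_tight} produces exactly such a structure from the hypothetical taut $C^0$-foliation. This contradiction shows $M$ admits no taut oriented $C^0$-foliation, which is the claim. I would remark that the whole argument is identical to the $C^2$-case treated in \cite{BC}, with Theorem \ref{approx_cont} — and its consequence Corollary \ref{taut_tight} — supplying precisely the missing ingredient that was unavailable at the time \cite{BC} was written, namely the passage from a merely $C^0$-foliation to a genuine contact structure in the same homotopy class of plane fields.

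I expect the main obstacle to be purely bookkeeping rather than conceptual: one must make sure the semi-fillability statement of Corollary \ref{taut_tight}, together with the hypothesis that $M$ is a rational homology sphere, really does feed into the precise form of the Heegaard-Floer $L$-space obstruction one wants to quote, in particular that the $b_2^+$ condition needed to cap off the semi-filling and apply the mixed invariants is satisfied — this is automatic here because $M\times[0,1]$ has $b_2^+ = 0$, so the cap can be chosen so that the closed manifold (or the resulting honest filling) meets the hypotheses of the Ozsv\'ath-Szab\'o non-$L$-space criterion. Once that is in place, no further geometry is needed.
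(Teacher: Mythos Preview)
Your proposal is correct and follows essentially the same route as the paper: invoke Corollary~\ref{taut_tight} to obtain a symplectically semi-fillable contact structure on $M$ with semi-filling $M\times[0,1]$, then appeal to the Ozsv\'ath--Szab\'o obstruction. The paper's proof is simply terser, citing \cite[Theorem~1.4]{OS} directly rather than unpacking the capping-off and $b_2^+$ bookkeeping you sketch.
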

 \begin{proof}
By the Corollary \ref{taut_tight}, $M$ admits a contact structure so that $M \times [0,1]$ is a symplectic semi-filling. The result then follows from (\cite{OS}, Theorem 1.4).
 \end{proof}
\noindent Indeed, the argument of Corollary \ref{taut_tight} shows that any contact structure sufficiently $C^0$-close to a taut foliation will be universally tight. In the case of $C^2$-foliations one knows that a positive contact structure approximating a foliation $\mathcal{F}$ is unique up to isotopy, as soon as the obvious necessary conditions are met by results of Vogel \cite{Vog}: i.e.\ $\mathcal{F}$ cannot have torus leaves, nor can it be a foliation by planes or by cylinders only. Vogel's methods use the $C^2$-regularity in an essential way so that it is far from clear whether the corresponding statement should hold for lower regularity. The following example suggests that this should not be the case for general $C^0$-foliations. Note that this example has a minimal set which is a lamination by cylinders (that is without holonomy) and this should perhaps be excluded in a $C^0$-version of Vogel's result.
\begin{ex}\label{fill_lam_surface}
Consider a geodesic lamination $\Lambda$ on a hyperbolic surface $\Sigma$ so that all complementary regions are ideal polygons with an even number of sides. By pulling back under the projection $\Sigma \times S^1 \longrightarrow \Sigma$, this then gives a lamination by cylinders whose complementary regions can be filled by stacks of chairs to obtain a taut foliation. Note that we have a choice of whether each stack of chairs is positively or negatively transverse to the $S^1$-fibers on a given complementary region. By Corollary \ref{taut_tight} this can be approximated by universally tight contact structures. These have been classified by Giroux \cite{Gir} and are determined by a family of vertical tori $\gamma_i \times S^1$ on which the characteristic foliation induced by the contact structure agrees with the one given by the $S^1$-fibers. The curves $\gamma_i$ should approximate $\Lambda$ and it seems unlikely that the curves $\gamma_i$ should be uniquely determined by $\mathcal{F}$ since there are many homotopically distinct curves that approximate $\Lambda$.
\end{ex}
\noindent The above example is quite general:
\begin{ex}
Let $\Lambda$ be an essential lamination whose complementary regions are ideal polygon bundles with an even number of sides. For example let $M_{\varphi}$ be a (hyperbolic) mapping torus with fiber $\Sigma$ and pseudo-Anosov monodromy $\varphi$ and let $\Lambda=\Lambda_{st}$ be the suspension of the stable invariant lamination of $\varphi$. Then the complement of $\Lambda$ can be filled in by stacks of chairs and by choosing the orientations of these chairs appropriately we can assume that the Euler class of the tangent plane field $T \mathcal{F}$ satisfies
$$|e(T\mathcal{F})\cdot [\Sigma]|< 2g(\Sigma) -2.$$
In particular, applying Corollary \ref{taut_tight} we deduce that there are universally tight contact structures on $M_{\varphi}$ such that $e(\xi) \neq \pm(2g(\Sigma) -2) $.
\end{ex}
Note that contact structures on hyperbolic mapping tori with $|e(\xi)\cdot [\Sigma]|= 2g(\Sigma) -2$ were classified by Honda-Kazez-Mati\'{c} \cite{HKM} and, in particular, they are unique up to isotopy. The example above raises the following problem, which is well-understood in the case of Seifert fibered spaces (cf.\ \cite{Bow0}):
\begin{prob}
Classify (universally) tight contact structures on hyperbolic mapping tori. In particular, can they all be obtained as perturbations of the foliations described above?
\end{prob}
More generally, foliations obtained by filling in complements of essential laminations give interesting $C^0$-foliations that will not be of class $C^2$ (e.g.\ they will contradict Sacksteder's Theorem as in Example \ref{fill_lam_surface}). The following example was considered in (\cite{ETh}, p.\ 60) in a slightly different context.
\begin{ex}[Filling in taut sutured complements]\label{fill_lam}
Let $\Lambda$ be any essential lamination on $M$ and suppose that $\Lambda$ is carried by a branched surface $B$. We thicken $B$ to obtain a foliated neighbourbood $N(B)$ of $B$ and note that the complement is naturally a sutured manifold. If it is {\bf taut} in the sense of Gabai \cite{Gab}, then one can fill in the complement of $N(B)$ by a taut foliation to obtain a Reebless foliation on all of $M$.
\end{ex}
Colin \cite{Col} has shown that any Reebless $C^2$-foliation can be $C^0$-approximated by universally tight contact structures. In fact Colin's argument extends readily to the $C^0$-case, once one can approximate Reebless foliations in the sense of Theorem \ref{approx_cont}. Thus we have the following:
 \begin{thm}\label{Reebless}
 Let $\mathcal{F}$ be a Reebless foliation of class $C^0$. Then $T\mathcal{F}$ is homotopic to both positive and negative contact structures that are universally tight.
 \end{thm}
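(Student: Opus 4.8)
The plan is to combine Theorem \ref{approx_cont} with Colin's original argument \cite{Col} that Reebless $C^2$-foliations admit universally tight contact approximations, observing that once a $C^0$-approximation result is available, Colin's scheme carries over verbatim. First I would dispose of the two exceptional cases excluded in Theorem \ref{approx_cont}. If $\mathcal{F}$ is the product foliation on $S^2\times S^1$, then it is certainly not Reebless in the relevant sense (or rather, $S^2\times S^1$ carries the standard tight contact structure, which is universally tight and homotopic to $T\mathcal{F}$ since both are the unique homotopy class with the appropriate Euler class — so the statement holds trivially). If $\mathcal{F}$ is a foliation by planes, then by Lemma \ref{lem_ima} the manifold is $T^3$ and $\mathcal{F}$ is semi-conjugate to a linear (smooth) foliation $\mathcal{F}'$; since $T\mathcal{F}\simeq T\mathcal{F}'$ as plane fields and $\mathcal{F}'$ can be handled by the $C^2$-theory (Colin's theorem applies, and the resulting Stein-fillable contact structures on $T^3$ are universally tight), the conclusion follows. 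So I may assume $\mathcal{F}$ is neither of these.

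Next I would invoke Theorem \ref{approx_cont} to obtain positive and negative contact structures $\xi_\pm$ that are $C^0$-close to $T\mathcal{F}$ and homotopic to it as plane fields. The content of Theorem \ref{Reebless} beyond Theorem \ref{approx_cont} is the tightness assertion. Here is where Colin's argument enters: by Novikov--Solodov (Theorem \ref{Novikov}), a Reebless $C^0$-foliation on a manifold other than $S^2\times S^1$ has $\pi_1$-injective leaves and no $S^2$ in $\pi_2$, and every transverse loop is $\pi_1$-essential; in particular the universal cover $\widetilde M$ is either $\mathbb R^3$ or $S^2\times\mathbb R$, the latter being excluded. Colin's method shows that if a contact structure is $C^0$-close enough to a Reebless foliation, then any overtwisted disc in the universal cover (for the pulled-back contact structure) leads to a contradiction, because the pulled-back foliation on $\widetilde M\cong\mathbb R^3$ has all leaves properly embedded planes separating $\mathbb R^3$, and one uses this product-like structure to rule out the existence of an overtwisted disc — essentially by a Palmeira/Reeb-stability-type argument controlling how a small disc can sit transverse to a nearly-integrable plane field. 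I would state that Colin's proof of this step uses only the $C^0$-closeness of the contact structure to the foliation together with the topological conclusions of Novikov's theorem, and hence applies unchanged once Theorem \ref{approx_cont} supplies the approximating $\xi_\pm$; the $C^2$-hypothesis in \cite{Col} was used \emph{only} to produce the approximation.

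The main obstacle — and the only point requiring genuine care — is verifying that Colin's tightness argument really is insensitive to the regularity of $\mathcal{F}$. Concretely, one must check that the finitely many geometric inputs Colin uses (existence of the approximation, $\pi_1$-injectivity of leaves, properness of leaves in the universal cover, and the fact that a plane field $C^0$-close to an $\mathbb R^2$-foliation of $\mathbb R^3$ cannot contain an overtwisted disc) all remain valid for $C^{0,\infty+}$-foliations. The first is Theorem \ref{approx_cont}; the second and third are Theorem \ref{Novikov} (Solodov's extension of Novikov, which as remarked in the excerpt even follows from Novikov's original proof in the $C^{0,\infty+}$ setting since leaves are $C^\infty$-immersed); the fourth is a local/universal-cover statement about plane fields and does not see the ambient regularity at all. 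I would therefore present the proof as: reduce to the non-exceptional case, apply Theorem \ref{approx_cont}, pull back to $\widetilde M$, apply Theorem \ref{Novikov} to identify $\widetilde M\cong\mathbb R^3$ with $\widetilde{\mathcal F}$ a foliation by planes, and then quote Colin's argument \cite{Col} that a contact structure sufficiently $C^0$-close to such a foliation is tight, concluding that $\xi_\pm$ are universally tight. I do not expect any new estimates to be needed beyond pointing out that the relevant lemmas of \cite{Col} are stated for $C^0$-close contact structures and make no further use of the $C^2$-hypothesis.
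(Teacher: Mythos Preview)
Your overall strategy --- handle the two exceptional cases, then invoke Theorem \ref{approx_cont} and appeal to Colin --- matches the paper's one-line proof. The problem is your account of what Colin's argument actually is.

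You describe Colin's method as: pass to the universal cover, observe it is $\mathbb{R}^3$ foliated by planes, and argue directly that any contact structure $C^0$-close to such a foliation admits no overtwisted disc. But this is not Colin's argument, and more importantly it would prove the stronger statement that \emph{every} sufficiently $C^0$-close contact structure is universally tight. The paper explicitly flags (in the paragraph immediately following Theorem \ref{Reebless}) that this stronger statement is the author's later improvement \cite{Bow} in the $C^2$ case, that the proof there relies essentially on Kopell's Lemma, and that the statement is \emph{false} for general $C^0$-foliations because of the phantom Reeb tori of Kazez--Roberts \cite{KR3}. So the argument you sketch cannot work in the $C^0$ setting you are in.

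Colin's actual scheme (in \cite{Col2}) is a cut-and-glue construction: decompose $M$ along the torus leaves of $\mathcal{F}$ into pieces on which the restricted foliation is taut (or a product on $T^2\times I$), apply the taut case (Corollary \ref{taut_tight} here) to get universally tight contact structures with controlled pre-Lagrangian boundary on each piece, and then reassemble using Colin's gluing theorem for universally tight contact structures along incompressible tori. The $C^2$ hypothesis enters only through the Eliashberg--Thurston approximation on the taut pieces, which is exactly what Theorem \ref{approx_cont} replaces. That is the sense in which ``Colin's argument extends readily to the $C^0$-case'': the gluing step is insensitive to regularity, and the approximation step is now supplied. Your proposal needs to replace the universal-cover paragraph with this decomposition-and-gluing outline.
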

 \noindent Applying Theorem \ref{Reebless} to the foliations described in Example \ref{fill_lam} we obtain universally tight contact structures. Thus the existence of a ``taut'' lamination $\Lambda$ on $M$, in the sense that the complement of a branched surface carrying $\Lambda$ is a taut sutured manifold is sufficient for the existence of (universally) tight contact structures on $M$ and $-M$. 
 
Colin's original result was improved by the author in \cite{Bow} to show that any sufficiently close contact structure to a Reebless foliation of class $C^2$ is in fact universally tight. However, again the arguments of \cite{Bow} use $C^2$-smoothness in an essential way in the form of Kopell's Lemma. Thus it is not obvious if an analogous statement should hold with less regularity and it is false in full generality due to the phenomena of phantom Reeb tori hgihlighted in \cite{KR3}. 

In Section \ref{sec_fol} we gave a working definition of a $C^0$-confoliation as something that naturally occurred in the process of approximating a foliation by a contact structure. Evidently our definition is far too restrictive and the correct definition should characterise those $C^0$-plane fields that can be $C^0$-approximated by contact structures with some additional integrability properties. In particular, this would include a definition of a $C^0$-contact structure and should specialise to the definition of a uniquely integrable plane field in the case that the non-integrable region is empty.

\end{document}